\title{Poincaré duality for spaces with isolated singularities}
\author{Mathieu Klimczak}
\address{Université des Sciences et Technologies, Lille}
\email{mathieu.klimczak@ed.univ-lille1.fr}
\keywords{Stratified spaces, intersection spaces, Poincaré duality spaces, isolated singularities, rational homotopy}
\subjclass[2010]{55P62, 57P10, 55N33}
\newtheorem{thm}{Theorem}[section]
\newtheorem{defi}{Definition}[thm]
\newtheorem{cor}{Corollary}[thm]
\newtheorem{lem}{Lemma}[thm]
\newtheorem{propo}{Proposition}[thm]
\theoremstyle{definition} \newtheorem{exmp}{Example}[section]
\theoremstyle{remark} \newtheorem{remark}[thm]{Remark}
\newcommand{\tr}[2]{t_{#1}#2} 
\newcommand{\redHI}[3]{\widetilde{H}I_{\overline{#1}}^{#2}(#3)} 
\newcommand{\redhI}[3]{\widetilde{H}I^{\overline{#1}}_{#2}(#3)} 
\newcommand{\IH}[3]{IH_{\overline{#1}}^{#2}(#3)} 
\newcommand{\iH}[3]{IH^{\overline{#1}}_{#2}(#3)} 
\newcommand{\DP}[1]{\mathcal{DP}(#1)} 
\newcommand{\I}[2]{I^{\overline{#1}}#2} 
\newcommand{\Apl}[1]{\mathsf{A}_{\mathsf{PL}}(#1)} 
\newcommand{\cotr}[2]{t^{#1}#2} 
\newcommand{\trL}[1]{t_{\overline{\mathcal{L}}}#1} 
\newcommand{\cotrL}[1]{t^{\overline{\mathcal{L}}}#1} 
\newcommand{\AI}[2]{AI_{\overline{#1}}(#2)} 
\newcommand{\nAI}[2]{A\mathcal{I}_{\overline{#1}}(#2)} 
\newcommand{\nI}[2]{\mathcal{I}^{\overline{#1}}#2} 
\newcommand{\Min}[1]{\mathsf{M}(#1)} 
\newcommand{\col}{\colon\thinspace} 
\newcommand{\im}[1]{\mathrm{im\,}#1} 
\newcommand{\coim}[1]{\mathsf{C}_{#1}} 
\begin{document}

\begin{abstract}  
In this paper we assign, under reasonable hypothesis, to each pseudomanifold with isolated singularities a rational Poincaré duality space. These spaces are constructed with the formalism of intersection spaces defined by Markus Banagl and are indeed related to them in the even dimensional case.
\end{abstract}
\maketitle

\definecolor{cff0000}{RGB}{255,0,0}
\section{Introduction}

We are concerned with rational Poincaré duality for singular spaces. There is at least two ways to restore it in this context :
\begin{itemize}
\item As a self-dual sheaf. This is for instance the case with rational intersection homology.
\item As a spatialization. That is, given a singular space $X$, trying to associate to it a new topological space $X_{DP}$ that satisfies Poincaré duality. This strategy is at the origin of the concept of intersection spaces.
\end{itemize}

Let us briefly recall this two approaches.

While seeking for a theory of characteristic numbers for complex analytic varieties and other singular spaces, Mark Goresky and Robert MacPherson discovered (and then defined in \cite{Goresky1980} for PL pseudomanifolds and in \cite{Goresky1983} for topological pseudomanifolds) a family of groups $\iH{p}{\ast}{X}$ called intersection homology groups of $X$. These groups depend on a multi-index $\overline{p}$ called a perversity. Intersection homology is able to restore Poincaré duality on topological stratified pseudomanifolds. 

If $X$ is a compact oriented pseudomanifold of dimension $n$ and $\overline{p}, \overline{q}$ are two complementary perversities, over $\mathbf{Q}$ we have an isomorphism
\[
\iH{p}{r}{X} \cong \IH{q}{n-r}{X},
\] 
With $\IH{q}{n-r}{X} := \hom(\iH{q}{n-r}{X}, \mathbf{Q})$.

Intersection spaces were defined by Markus Banagl in \cite{Banagl2010} as an attempt to spatialize Poincaré duality for singular spaces. Given a stratified pseudomanifold $X$ of dimension $n$ with only isolated singularities and simply connected links we have a family of topological spaces $\I{p}{X}$ indexed by perversities $\overline{p}$. By analogy with intersection homology, denote by $\redhI{p}{\ast}{X} := \widetilde{H}_{\ast}(\I{p}{X})$ and $\redHI{p}{\ast}{X} := \widetilde{H}^{\ast}(\I{p}{X})$. Over $\mathbf{Q}$ and for complementary perversities $\overline{p}$, $\overline{q}$, we have an isomorphism
\[
\redhI{p}{r}{X} \cong \redHI{q}{n-r}{X}.
\]
One may regard the theory of intersection spaces as an enrichment of intersection homology and we recover informations about intersection homology thanks to those intersection spaces. In particular they have the same information when it comes to the signature of the intersection form as shown in \cite[theorem 2.28]{Banagl2010}. Suppose $X$ is a compact oriented pseudomanifold of dimension $n=4s$ with only isolated singularities and simply connected links. Considering the middle perversity intersection space $\I{m}{X}$ gives us the isomorphism
\[
\redhI{m}{r}{X} \cong \redHI{m}{4s-r}{X}.
\]
Then, it is shown that the two bilinear pairings over $\mathbf{Q}$
\[
b_{HI} : \redhI{m}{2s}{X} \times \redhI{m}{2s}{X} \longrightarrow \mathbf{Q}
\]
and 
\[
b_{IH} : \iH{m}{2s}{X} \times \iH{m}{2s}{X} \longrightarrow \mathbf{Q}
\]
have the same Witt element in $W(\mathbf{Q})$.

It must be noticed that the pairing $\redhI{m}{2s}{X} \times \redhI{m}{2s}{X} \rightarrow \mathbf{Q}$ is not realized as the quadratic form associated to the generalized Poincaré duality of the space $\I{m}{X}$. In fact, the space $\I{m}{X}$ does not have a fundamental class. That is we can't express $b_{HI}(x,y)$ as $\langle [\I{m}{X}], x \cup y \rangle$ where $[\I{m}{X}]$ would be a fundamental class of the space $\I{m}{X}$.

What we show is the existence of a rational Poincaré duality space $\DP{X}$. In particular the pairing defined above for the middle perversity intersection space can be realized as a pairing induced by a classical Poincaré duality and we have
\[
b_{\DP{X}}(x,y) = \langle [\DP{X}], x \cup y \rangle.
\]

Basically, we can separate the construction of the space $\DP{X}$ in two cases, whether the pseudomanifold $X$ has only one isolated singularity or more.

Suppose given a compact, connected oriented pseudomanifold $X$ of dimension $n$ with only isolated singularities $\Sigma = \lbrace \sigma_{1}, \dots, \sigma_{\nu} \rbrace$ and denote by $L_{i}$ the link of the singularity $\sigma_{i}$. We also suppose that the $L_{i}$ are all simply connected.

We have the following definition and theorems

\begin{defi}\label{def:Q_Poinc_approx}
Let $X$ be a compact, connected pseudomanifold with only isolated singularities $\Sigma = \lbrace \sigma_{1}, \dots, \sigma_{\nu} \rbrace$. Denote by $\overline{X}$ the normalization of $X$. A \textbf{good} rational Poincaré approximation of $X$ is a topological space $\DP{X}$ such that 
\begin{enumerate}
\item $\DP{X}$ is a rational Poincaré duality space,
\item there is a rational factorization of the inclusion $i \col X_{reg} \rightarrow \overline{X}$ in two maps
\[
X_{reg} \overset{\phi}{\longrightarrow} \DP{X} \overset{\psi}{\longrightarrow} \overline{X}.
\]
That is $\psi_{r} \circ \phi_{r} = i_{r} \col H_{r}(X_{reg}) \rightarrow H_{r}(\overline{X})$ such that
\begin{itemize}
\item If $\dim X = 2s$, then
\begin{enumerate}
\item $\phi_{r} \col H_{r}(X_{reg}) \longrightarrow H_{r}(\DP{X})$ is an isomorphism for $2s-1 > r > s$ and an injection for $r=s$,
\item $\psi_{r} \col H_{r}(\DP{X}) \longrightarrow H_{r}(\overline{X})$ is an isomorphism for $r < s$ and $r=2s$.
\end{enumerate}
\item If $\dim X = 2s+1$, then
\begin{enumerate}
\item $\phi_{r} \col H_{r}(X_{reg}) \longrightarrow H_{r}(\DP{X})$ is an isomorphism for $2s > r > s+1$ and an injection for $r=s+1$,
\item $\psi_{r} \col H_{r}(\DP{X}) \longrightarrow H_{r}(\overline{X})$ is an isomorphism for $r < s$ and $r=2s+1$ and a surjection for $r=s$.
\end{enumerate} 
\end{itemize}
\end{enumerate}

We say that $\DP{X}$ is a \textbf{very good} rational Poincaré approximation of $X$ if
\begin{itemize}
\item when $\dim X = 2s$, then $\phi_{s}$ is also an isomorphism.
\item when $\dim X = 2s+1$, then $\phi_{s+1}$ and $\psi_{s}$ are also isomorphisms.
\end{itemize}
\end{defi}

\begin{thm}[Unique isolated singularity case]\label{thm:main_thm_one_sing}
Let $X$ be a compact, connected oriented normal pseudomanifold of dimension $n$ with one isolated singularity of link $L$ simply connected. There exists a good rational Poincaré approximation $\DP{X}$ of $X$. Moreover if $\dim X \equiv 0 \mod 4$, then the Witt class associated to the intersection form $b_{\DP{X}}$ is the same that the Witt class associated to the middle intersection cohomology of $X$.
\end{thm}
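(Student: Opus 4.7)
The plan is to construct $\DP{X}$ via rational homotopy theory: first build a Poincaré duality CDGA model $A_{\DP{X}}$ and then realize it topologically using Sullivan's realization functor. Since $L$ is simply connected, I would equip $\Apl{L}$ with a Poincaré duality model (Lambrechts--Stanley), and form the truncation $\trL{\Apl{L}}$ together with the cotruncation $\cotrL{\Apl{L}}$ at the middle degree prescribed by the perversity, so that the pair $(\trL{\Apl{L}}, \cotrL{\Apl{L}})$ splits the duality pairing of $L$ into two complementary halves. The key construction is then a CDGA $A_{\DP{X}}$ fitting into a factorization $\Apl{\overline{X}} \to A_{\DP{X}} \to \Apl{X_{reg}}$, obtained from the restriction $\Apl{X_{reg}} \to \Apl{L}$ by postcomposing with the projection onto $\cotrL{\Apl{L}}$ and taking an appropriate mapping-cone/pullback. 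This is the CDGA analogue of Banagl's intersection-space recipe, but arranged so that a fundamental class can be chosen.

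Second, I would verify that $A_{\DP{X}}$ is a Poincaré duality CDGA of formal dimension $n$: the duality of $\Apl{L}$ restricts to a perfect pairing between $\trL{\Apl{L}}$ and $\cotrL{\Apl{L}}$, and the relative orientation class of the manifold with boundary $(X_{reg}, L)$ combines with this restricted pairing to yield an internal duality isomorphism on $A_{\DP{X}}$. Sullivan's realization then produces the required simply connected rational Poincaré duality space $\DP{X}$, and realizing the two canonical CDGA maps yields the factorization $X_{reg} \overset{\phi}{\longrightarrow} \DP{X} \overset{\psi}{\longrightarrow} \overline{X}$. The homology conditions in Definition \ref{def:Q_Poinc_approx} then follow from a diagram chase in the long exact sequences of the mapping-cone construction, compared with the analogous sequences for $\overline{X} = X_{reg} \cup_{L} CL$: below the middle degree the truncation kills the low-degree link classes so $\psi_{\ast}$ is an isomorphism onto $H_{\ast}(\overline{X})$, while above it no link classes are killed from $X_{reg}$ so $\phi_{\ast}$ is an isomorphism.

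For the Witt-class assertion when $n = 4s$, I would observe that $H_{2s}(\DP{X}; \mathbf{Q})$ coincides with $\iH{m}{2s}{X}$ via the image of $H_{2s}(X_{reg}; \mathbf{Q}) \to H_{2s}(\overline{X}; \mathbf{Q})$ (a classical presentation of middle intersection homology in the isolated singularity setting), and that the pairing $b_{\DP{X}}(x, y) = \langle [\DP{X}], x \cup y \rangle$ transports to the intersection-cohomology pairing under this identification, giving equality in $W(\mathbf{Q})$. The main obstacle I expect is the second step: proving that $A_{\DP{X}}$ actually supports intrinsic Poincaré duality. This requires a truncation operation compatible with the multiplicative CDGA structure (the naive cochain truncation is not a subalgebra), together with a careful control of how the relative orientation of $(X_{reg}, L)$ pairs with the truncation on the link; without this, $\DP{X}$ would inherit only the external Poincaré duality already present on $\I{m}{X}$, rather than one induced by an honest fundamental class.
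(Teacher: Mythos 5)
Your plan and the paper's proof diverge at the decisive point, and that is where the gap lies. The paper does not obtain $\DP{X}$ by realizing a Poincaré duality CDGA; it builds it topologically as $X_{reg} \cup_{L} \cotr{\phi}{L}$, where $\cotr{\phi}{L}$ is the cotruncation of the link with a single top cell $D^{n}$ attached along a map $\phi \col S^{n-1} \rightarrow \cotr{}{L}$ chosen so that the Hurewicz image of $[\phi]$ is nonzero in $H_{n-1}(\cotr{}{L}) \cong H_{n-1}(L) \cong \mathbf{Q}$. That choice is exactly what makes $(\cotr{\phi}{L}, L)$ a Poincaré duality pair, and Browder's gluing theorem for Poincaré pairs with common oriented boundary then yields $\DP{X}$ with an honest fundamental class; the Witt-class statement follows from Novikov additivity because the intersection form of the pair $(\cotr{\phi}{L},L)$ is identically zero. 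The existence of $\phi$ is the entire technical content of the proof, and it splits into cases: in the even-dimensional and Witt cases it comes from the rational Hurewicz theorem applied to the highly connected cotruncation; in the remaining odd-dimensional case one must first invoke Thom's lemma ($[b_{L}]=0$ in $W(\mathbf{Q})$ because $L$ bounds $X_{reg}$) to obtain a Lagrangian subspace, perform the Lagrangian truncation, and then prove that the element $\varpi$ representing $[L]^{\ast}$ is indecomposable in the minimal model of the Lagrangian cotruncation, so that some homotopy class pairs nontrivially with it. Your proposal contains no substitute for any of this: you correctly observe that the mapping-cone CDGA carries only the external duality of $\I{m}{X}$ and that an intrinsic fundamental class must be produced, but you explicitly defer that step (``the main obstacle I expect''), and you make no case distinction between the even, Witt, and L-space situations — so the proposal never explains why the top-degree class exists, why a Lagrangian subspace of $H^{s}(L)$ exists when $n$ is odd and $H^{s}(L)\neq 0$, or why the resulting pairing is nondegenerate.

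There is a second, independent problem with the realization strategy: the theorem assumes only that the link $L$ is simply connected, not $X_{reg}$ or $\overline{X}$, so Sullivan realization cannot produce the spaces and the maps $\phi$, $\psi$ of Definition \ref{def:Q_Poinc_approx} without adding hypotheses. The paper's gluing construction is insensitive to $\pi_{1}(X_{reg})$; the condition $H^{1}(X_{reg})=0$ is invoked only later, for the refinement identifying $\DP{X}$ with $\I{m}{X}$ with a top cell attached by Whitehead products via Stasheff's theorem. To salvage the CDGA route you would need to restrict to simply connected $X_{reg}$ and, more importantly, supply the analogues of the paper's Hurewicz and indecomposability lemmas to show that the top class can be adjoined with a nondegenerate pairing — at which point you would essentially be reproving the paper's argument in algebraic language.
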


Depending of the dimension of $X$ we use two different types of truncations on the link of the singularity $L$. When $\dim X = 2s$ this is the classical homology truncation defined by Markus Banagl in \cite{Banagl2010}. The odd dimensional case also use the Lagrangian truncation which will be defined in section \ref{subsec:Lag_truncation}.

The odd dimensional case breaks down in two subcases corresponding to the type of space we are dealing with. If the space is a Witt space we can construct a rational Poincaré duality space, and if the space is an L-space we can perform a Lagrangian truncation to get a rational Poincaré duality space. In the even dimensional case we can always construct a rational Poincaré duality space. We have the following theorem.

\begin{thm}[Multiple isolated singularities case]\label{thm:main_thm_mult_sing}
Let $X$ be a compact, connected oriented normal pseudomanifold of dimension $n$ with only isolated singularities $\Sigma = \lbrace \sigma_{1}, \dots, \sigma_{\nu} ; \nu > 1 \rbrace$ of links $L_{i}$ simply connected. Then, 
\begin{enumerate}
\item If $n = 2s$, there exists a good rational Poincaré approximation $\DP{X}$ of $X$. Moreover, if $\dim X \equiv 0 \mod 4$, then the Witt class associated to the intersection form $b_{\DP{X}}$ is the same as the Witt class associated to the middle intersection cohomology of $X$.
\item If $n = 2s+1$ and $X$ is either a Witt space or an L-space then there exists a good rational Poincaré approximation $\DP{X}$ of $X$. Moreover when $X$ is a Witt space $\DP{X}$ is a very good rational Poincaré approximation of $X$.
\end{enumerate}
\end{thm}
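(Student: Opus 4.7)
The plan is to build $\DP{X}$ by a single pushout that caps off each singularity simultaneously. Write $M$ for the compact oriented manifold with boundary obtained by excising small open conical neighborhoods of the $\sigma_i$, so that $M \simeq X_{reg}$, $\partial M = \bigsqcup_{i=1}^{\nu} L_i$, and (using normality of $X$) $X$ itself is recovered as $M \cup_{\partial M} \bigsqcup_i \mathrm{cone}(L_i)$. For each link $L_i$ I will choose a truncation $\tau_i L_i \to L_i$: Banagl's homology truncation in the even-dimensional case and in the odd-dimensional Witt case, and the Lagrangian truncation of Section~\ref{subsec:Lag_truncation} in the odd-dimensional L-space case. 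I then set
\[
\DP{X} \;=\; M \cup_{\partial M} \bigsqcup_{i=1}^{\nu} \mathrm{cone}(\tau_i L_i),
\]
with $\phi\col X_{reg} \to \DP{X}$ the inclusion and $\psi\col \DP{X} \to X$ the map induced by $\tau_i L_i \hookrightarrow L_i$ on each truncated cone; in particular $\psi_\ast \circ \phi_\ast = i_\ast$.

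The conditions of Definition~\ref{def:Q_Poinc_approx} would then follow from Mayer--Vietoris applied to the two pushouts defining $\DP{X}$ and $X$ and compared through $\phi$ and $\psi$. Since the boundary components $L_i$ are disjoint, the contribution of each $\tau_i L_i$ decouples, and the one-singularity analysis of Theorem~\ref{thm:main_thm_one_sing} applies componentwise to furnish the asserted degree ranges for $\phi_r$ and $\psi_r$. The same componentwise argument delivers the sharper middle-degree isomorphism when $X$ is Witt, hence a very good approximation in that case.

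Rational Poincaré duality for $\DP{X}$ itself would be established by combining Lefschetz duality for $(M,\partial M)$ with the duality properties of the truncated cones. Each $\mathrm{cone}(\tau_i L_i)$ is, below the truncation degree, acyclic relative to its base, and, above it, a geometric witness to the duality $H_\ast(L_i) \cong H^{n-1-\ast}(L_i)$; in the Lagrangian case, the chosen half-rank middle-dimensional subspace is precisely the one that makes this pairing close up. A five-lemma argument on the Mayer--Vietoris sequences for $\DP{X}$ and its dual then converts these local dualities, assembled along $\partial M$ via Lefschetz duality for $M$, into a global cap-product isomorphism $H^r(\DP{X}) \cong H_{n-r}(\DP{X})$.

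The main obstacle breaks into two pieces. For the Witt-class assertion in the even-dimensional case with $n \equiv 0 \bmod 4$, I need to identify $b_{\DP{X}}$ with Banagl's pairing on $\redhI{m}{2s}{X}$ via $\phi_\ast$; this reduces, using the degree-range analysis of the first step, to showing that the two pairings coincide on the relevant subquotient, after which \cite[Theorem~2.28]{Banagl2010} gives agreement with the middle intersection cohomology Witt class. The harder piece is the odd-dimensional L-space case: Lagrangians $\ell_i \subset H_s(L_i;\mathbf{Q})$ chosen singularity-by-singularity must assemble into a global Lagrangian $\bigoplus_i \ell_i \subset H_s(\partial M;\mathbf{Q})$ that is complementary to the image of $H_{s+1}(M,\partial M) \to H_s(\partial M)$; verifying this compatibility, which is exactly the global L-space condition on $X$, will be the delicate part of the construction.
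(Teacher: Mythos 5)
There is a genuine gap, and it lies at the very first step: your space
\[
M \cup_{\partial M} \bigsqcup_{i=1}^{\nu} \mathrm{cone}(\tau_{i}L_{i}),
\]
formed as the homotopy pushout along the comparison maps $\tau_{i}L_{i} \to L_{i} \subset \partial M$, is (rationally) exactly the normal intersection space $\nI{p}{X}$ of Definition~\ref{def:intersectionspace}, since $\cotr{k}{L_{i}}$ is by definition the cofiber of $\tr{k}{L_{i}} \to L_{i}$. The introduction of the paper stresses that this space is \emph{not} a rational Poincaré duality space: it has no fundamental class. Concretely, $H_{n}(X_{reg})=0$ for a connected $n$-manifold with nonempty boundary and $\widetilde{H}_{n-1}(\tau_{i}L_{i})=0$ because the truncation kills all homology in degrees $\geq k(\overline{p}) \leq n-1$, so the Mayer--Vietoris sequence forces $H_{n}$ of your space to vanish while $H_{0}=\mathbf{Q}$; no cap product with a degree-$n$ class can then be an isomorphism $H^{0}\to H_{n}$. (With $\nu>1$ the failure already appears in degree $n-1$, where $H_{n-1}(X_{reg})\cong\mathbf{Q}^{\nu-1}$ survives into your pushout; this is why the paper must pass to the truncation $\tr{2s-1}{\mathcal{I}X}$ in the even-dimensional multiple-singularity case.) The intersection space only satisfies the \emph{generalized} duality between complementary perversities, which is not what Definition~\ref{def:Q_Poinc_approx} asks for, so no five-lemma argument on Mayer--Vietoris can rescue the construction as it stands.

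The missing idea is the attachment of top cells. The paper replaces each cone on a truncation by the pair $(\cotr{\phi_{i}}{L_{i}}, L_{i})$, obtained from the cotruncation $\cotr{}{L_{i}}$ by attaching a single $n$-cell along a map $\phi_{i}\in\pi_{n-1}(\cotr{}{L_{i}})\otimes\mathbf{Q}$ whose Hurewicz image generates $H_{n-1}(L_{i})\cong\mathbf{Q}$ (Lemma~\ref{lem:condition_PDpair}; in the Witt and L-space cases the existence of $\phi_{i}$ uses the rational Hurewicz theorem, respectively the indecomposability of the fundamental cocycle in $\Min{\cotrL{L_{i}}}$, Lemma~\ref{lem:indecomposable}). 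This is precisely what produces a relative orientation class $[e_{\phi}]_{i}$ with $\partial[e_{\phi}]_{i}=[L_{i}]$, making $(\cotr{\phi_{i}}{L_{i}},L_{i})$ a Poincaré duality pair with the same oriented boundary component as $(X_{reg},\partial X_{reg})$; Browder's gluing theorem~\ref{thm:glueing} then yields a closed Poincaré complex, and Novikov additivity (Lemma~\ref{lem:novikov}) gives the Witt-class statement since the glued-on pairs carry the zero form. Note also that this mechanism makes your final worry moot: the Lagrangians $V_{i}\subset H^{s}(L_{i})$ are chosen independently for each link, with no global compatibility with $\im{(H_{s+1}(M,\partial M)\to H_{s}(\partial M))}$ required, because the duality of $\DP{X}$ is inherited pairwise from the gluing theorem rather than assembled by hand from local dualities.
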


The first section of this paper contains known definitions and results we will use. We first recall the definitions of pseudomanifolds, perversities and we give a brief account of rational homotopy theory. The second part is devoted to the theory of homological truncation theory and intersection spaces defined by Markus Banagl in \cite{Banagl2010}, we also give a rational model of the intersection spaces in \ref{propo:rat_model_IpX}. We extend the homological truncation to a Lagrangian truncation in  the third part \ref{subsec:Lag_truncation}. 

The second section is devoted to the construction of the spaces $\DP{X}$. We first recall the notions we use about Poincaré duality. We then completely develop the method of construction, first with a unique isolated singularity and then explain how to modify the results to get the general theorem in the context of multiple isolated singularities.

We finish with a section of examples, the real algebraic varieties, the nodal hypersurfaces and the Thom spaces.

\section{Background, truncations and intersection spaces}

\subsection{Pseudomanifold, Goresky MacPherson perversity and rational homotopy theory}

In this paper we are concerned with stratified pseudomanifold with isolated singularities of dimension $n$. That is a compact Hausdorff topological space $X$ with stratification : $\emptyset \subset \Sigma \subset X_{n}=X$. Where $\Sigma = \lbrace \sigma_{1}, \dots, \sigma_{\nu} \rbrace$ is a finite set of points, the isolated singularities. Each singularity has a cone-like open neighbourhood $\overset{\circ}{c}L(\sigma_{i},X)$ in $X$ where $L(\sigma_{i},X)$ is a topological manifolds of dimension $n-1$. The space $L(\sigma_{i},X)$ is called a link of $\sigma_{i}$ in $X$. We will denote by $L_{i} := L(\sigma_{i},X)$ a link in $X$ of the singularity $\sigma_{i}$ and by 
\[
L(\Sigma,X) := \bigsqcup_{\sigma_{i} \in \Sigma} L_{i}
\]
the disjoint union of the links. The space $L(\Sigma,X)$ is then a disjoint union of topological manifolds of dimension $n-1$. 

Let $X$ be a stratified pseudomanifold with only isolated singularities 
\[
\Sigma = \lbrace \sigma_{1}, \dots, \sigma_{\nu} \rbrace.
\]
Removing a small open neighbourhood of each singularities $\sigma_{i}$ gives us a manifold with boundary $(X_{reg}, \partial X_{reg})$ where the number of connected components of $\partial X_{reg}$ is the number of connected components of $L(\Sigma, X)$. The manifold $X_{reg}$ is called the regular part of $X$.

\begin{defi}
Let $X$ is a compact, connected oriented pseudomanifold of dimension $n$ with only isolated singularities $\Sigma = \lbrace \sigma_{1}, \dots, \sigma_{\nu} \rbrace$. We say that $X$ is a normal pseudomanifold if the link $L_{i}$ of each singularities $\sigma_{i}$ is connected.
\end{defi}

Given $X$ is a compact, connected oriented pseudomanifold of dimension $n$ with only isolated singularities, one can construct its normalization $\overline{X}$ by considering its regular part $X_{reg}$ and by coning off separately each connected components of the boundary $\partial X_{reg}$. The space $\overline{X}$ is then a normal pseudomanifold with only isolated singularities. We have a map $\overline{X} \longrightarrow X$. 

Since any topological pseudomanifold admits a normalization by the above process and since the definition \ref{def:Q_Poinc_approx} of a Poincaré duality approximation space of $X$ involves a map from $\DP{X}$ to its normalization $\overline{X}$, we will always assume for the rest of this paper that the pseudomanifolds considered here are normal pseudomanifolds.

Since we only work with isolated singularities, a perversity is just a number $\overline{p} \in \lbrace 0,1,\dots, \dim X-2 \rbrace$. We denote by $\overline{m}$ and $\overline{n}$ the following perversities
\[
\begin{cases}
\overline{m} & := \lfloor\frac{\dim X}{2}\rfloor - 1, \\
\overline{n} & := \lceil\frac{\dim X}{2}\rceil - 1. \\
\end{cases}
\]

As for what concerns rational homotopy theory we refer to \cite{Felix2008} and \cite{Felix2001}, we will denote by $A^{\ast}(K)$ some rational model of $K$ and by $\Min{K}$ its minimal model. We will also use the two following results. 

\begin{thm}[Rational Hurewicz theorem,\cite{Dyer1972},\cite{Klaus2004}]\label{thm:Qhurewicz}
Let $K$ be a simply connected topological space with $\pi_{i}(K)\otimes \mathbf{Q} =0$ for $1 <i <r$. Then the Hurewicz map induces an isomorphism
\[
\textsc{Hur}_{i} : \pi_{i}(K) \otimes \mathbf{Q} \longrightarrow H_{i}(K)
\]
for $1 \leq i < 2r-1$ and a surjection for $i=2r-1$.
\end{thm}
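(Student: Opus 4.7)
The plan is to derive the statement from the structure of the minimal Sullivan model of $K$, in keeping with the rational homotopy framework already introduced in the excerpt.

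First, I would take the minimal Sullivan model $(\Min{K}, d) = (\Lambda V, d)$ of $K$. Since $K$ is simply connected, $V^{1} = 0$; and via the standard duality $V^{n} \cong \hom(\pi_{n}(K)\otimes\mathbf{Q},\mathbf{Q})$ the hypothesis $\pi_{i}(K)\otimes\mathbf{Q} = 0$ for $1 < i < r$ forces $V^{i} = 0$ for all $i < r$. In other words, the generating space $V$ is concentrated in degrees $\geq r$.

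Next, I would exploit this connectivity to control the algebra in low degrees. Any non-zero element of $\Lambda^{\geq 2}V$ is a product of at least two generators of degree $\geq r$, so $(\Lambda^{\geq 2}V)^{n} = 0$ for every $n < 2r$; consequently $(\Lambda V)^{n} = V^{n}$ in this range. The minimality condition gives $d(V)\subset \Lambda^{\geq 2}V$, so $d$ vanishes on $V^{n}$ whenever $n+1 < 2r$, that is, for $n \leq 2r - 2$.

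Combining these two observations one reads off directly that $H^{n}(\Lambda V, d) = V^{n}$ for every $n < 2r - 1$, while $H^{2r-1}(\Lambda V, d) = \ker\!\bigl(d\vert_{V^{2r-1}}\bigr)$ injects into $V^{2r-1}$. The canonical linear projection $\pi\col \Lambda V \twoheadrightarrow \Lambda V / \Lambda^{\geq 2}V = V$ is a morphism of cochain complexes (the target carrying the zero differential) precisely because of minimality, and the induced map $H^{n}(\Lambda V)\to V^{n}$ is therefore an isomorphism for $n < 2r-1$ and an injection for $n = 2r - 1$.

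Finally, I would dualise. Through the canonical identifications $H^{\ast}(K;\mathbf{Q})\cong H^{\ast}(\Lambda V)$ and $V^{n}\cong \hom(\pi_{n}(K)\otimes\mathbf{Q},\mathbf{Q})$, the dual of $\pi$ is precisely the rational Hurewicz homomorphism. Establishing this last identification is the only conceptual step of the argument --- everything else is the minimal-model bookkeeping above --- and I would invoke it directly from the standard treatment in \cite{Felix2001}. Dualising the previous cohomological conclusions then yields that $\textsc{Hur}_{i}$ is an isomorphism for $1 \leq i < 2r - 1$ and a surjection for $i = 2r - 1$.
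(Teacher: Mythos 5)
Your argument is correct, but it is worth noting that the paper offers no proof of this statement at all: it is quoted as a classical result with references to Dyer and to Klaus--Kreck, whose proofs proceed by comparing the Serre spectral sequences of the path--loop fibration of $K$ and of its rationalization (an argument that works for arbitrary simply connected spaces). Your minimal-model derivation is the standard Sullivan-theoretic alternative, and all the bookkeeping is right: $V^{<r}=0$ forces $(\Lambda^{\geq 2}V)^{<2r}=0$, minimality kills $d$ on $V^{\leq 2r-2}$, and the projection onto indecomposables is a cochain map whose induced map on cohomology is an isomorphism below degree $2r-1$ and an injection in degree $2r-1$; the identification of its dual with the rational Hurewicz map is exactly the pairing the paper itself later invokes from \cite[\S 15(e)]{Felix2001}. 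The one point you should make explicit is that the final dualisation step --- recovering $\pi_{i}(K)\otimes\mathbf{Q}\to H_{i}(K)$ from the cohomological statement, and in particular turning the injection in degree $2r-1$ into a surjection of the preduals --- uses that $K$ has rational homology of finite type, since $V^{n}\cong\hom(\pi_{n}(K)\otimes\mathbf{Q},\mathbf{Q})$ and $H^{n}(K)\cong\hom(H_{n}(K),\mathbf{Q})$ only biduluize correctly under that hypothesis. This is harmless in the paper (all spaces in play are compact manifolds, their skeleta, and finite constructions on them), but it means your proof establishes a slightly weaker statement than the theorem as quoted, whereas the spectral-sequence proof in the cited references does not need finite type.
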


\begin{thm}[{\cite[theorem 9.11 p.111]{Felix2001}}]\label{thm:Q_cell_model} 
Every simply connected space $K$ is rationally modelled by a CW-complex $\widetilde{K}$ for which the differential in the integral cellular chain complex is identically zero.
\end{thm}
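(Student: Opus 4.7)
The plan is to construct $\widetilde{K}$ inductively on skeleta, maintaining two properties at each stage: (i) $\widetilde{K}^{(n)}$ has exactly $b_k := \dim_{\mathbf{Q}} H_k(K;\mathbf{Q})$ cells in each dimension $2 \leq k \leq n$ (with only a single $0$-cell in low degrees, since $K$ is simply connected, so no $1$-cells are needed), and (ii) every integer cellular boundary map of $\widetilde{K}^{(n)}$ vanishes. Start with $\widetilde{K}^{(0)} = *$. Suppose $\widetilde{K}^{(n)}$ has been built together with a map $f_n \col \widetilde{K}^{(n)} \to K$ inducing isomorphisms on rational homology and on rational homotopy through degree $n$. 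To pass to dimension $n+1$, I would attach $b_{n+1}$ cells, one per basis element of $H_{n+1}(K; \mathbf{Q})$; the attaching maps $\phi \col S^n \to \widetilde{K}^{(n)}$ are produced via obstruction theory from $\pi_n(\widetilde{K}^{(n)})$, using the rational Hurewicz theorem \ref{thm:Qhurewicz} together with $f_n$ to match the Postnikov data of $K$.

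The critical observation is that the integer cellular boundary of each new $(n+1)$-cell equals the Hurewicz image of $\phi$ in $H_n(\widetilde{K}^{(n)}; \mathbf{Z})$. By the inductive hypothesis, this group is $\mathbf{Z}^{b_n}$, hence torsion-free. Rationally, the image of $\phi$ under Hurewicz must vanish, because the new cell is intended to produce a fresh class in $H_{n+1}(\widetilde{K}^{(n+1)}; \mathbf{Q})$ rather than to cancel an existing class in $H_n$; and a rationally vanishing element of a torsion-free abelian group vanishes integrally. Property (ii) is therefore preserved, and passing to the colimit $\widetilde{K} := \bigcup_n \widetilde{K}^{(n)}$ produces a CW complex whose integer cellular differential is identically zero in every degree.

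The principal obstacle lies in the inductive step itself: one must show that exactly $b_{n+1}$ cells, with suitably chosen attaching maps, reproduce the correct rational homotopy type of $K$ through degree $n+1$, neither missing a rational homology class nor introducing spurious ones via uncontrolled Whitehead or higher-order products. This is where Sullivan's theory of minimal models becomes indispensable: a minimal Sullivan model $(\Lambda V, d) \to \Apl{K}$ encodes, up to rational equivalence, the precise Postnikov invariants of $K$, and a cell-by-cell realization of this algebraic data (with attaching maps dual to the restriction of the Sullivan differential to appropriate pieces of $\Lambda V$) produces the desired CW complex. The compatibility of this realization with the cell count $b_n$ ultimately reduces to the standard fact that the minimal model's cohomology reproduces $H^*(K;\mathbf{Q})$.
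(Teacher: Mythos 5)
Your sketch follows the same inductive cellular-model construction that the paper itself relies on: the paper gives no proof of its own, citing \cite[theorem 9.11 p.111]{Felix2001} and only remarking that $\varphi\col\widetilde{K}\rightarrow K$ is built skeleton by skeleton so as to restrict to rational homotopy equivalences on subcomplexes, and your key mechanism --- the attaching maps have rationally trivial Hurewicz image (since $f_{n}\circ\phi$ must be null-homotopic and $H_{n}(f_{n};\mathbf{Q})$ is injective by induction), and rational vanishing in the torsion-free group $C_{n}(\widetilde{K})\cong\mathbf{Z}^{b_{n}}$ forces integral vanishing --- is exactly the one in the cited proof. The proposal is correct in outline.
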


Let's just say that the construction of $\varphi \col \widetilde{K} \rightarrow K$ is made inductively so that $\varphi$ restrict to rational homotopy equivalences over some sub-CW-complex of $K$. The two following remarks follow from the proof.

\begin{remark}
\label{rmk:Q_cell_model}
\begin{enumerate}
\item Let $K_{1}$ and $K_{2}$ be two simply connected CW-complexes such that $K_{1}^{s} = K_{2}^{s}$ for all $s \leq k$. Then $\widetilde{K_{1}}^{s} = \widetilde{K_{2}}^{s}$ for all $s \leq k$ and the map $\varphi^{K_{1}}_{s}$ and $\varphi^{K_{2}}_{s}$ are equal for all $s \leq k$.
\item If $K$ is a CW-complex of dimension $n$ such that for the cellular chain complex $(C_{\ast}(K), \partial_{\ast})$ we have $\ker \partial_{n} =0$. Then $\widetilde{K}$ is a CW-complex of dimension $n-1$.
\end{enumerate}
\end{remark}

\subsection{Homological truncation and intersection spaces}

In \cite{Banagl2010} Markus Banagl constructed, for a given perversity $\overline{p}$, a space called the perversity $\overline{p}$ intersection space of $X$ denoted by $\I{p}{X}$. We briefly recall the construction.

\begin{defi}
\label{def:ntroncation}
Given an integer $k \geq 3$, a (homological) $k$-truncation structure is a quadruple $(K,K/k,h,\tr{k}{K})$, where
\begin{itemize}
\item $K$ is a simply connected CW-complex,
\item $K/k$ is an $k$-dimensional CW-complex with $(K/k)^{k-1} = K^{k-1}$ and such that the group of $k$-cycles of $K/k$ has a basis of cells,
\item $h \col K/k \rightarrow K^{k}$ is the identity on $K^{k-1}$ and a cellular homotopy equivalence rel $K^{k-1}$, and
\item $\tr{k}{K} \subset K/k$ is a subcomplex such that 

\begin{equation}\label{eqn:ntroncation}
H_{r}(\tr{k}{K};\mathbf{Z}) \cong
\begin{cases}
H_{r}(K;\mathbf{Z})      &  r < k,\\
0                        &  r \geq k,
\end{cases}
\end{equation}
and such that $(\tr{k}{K})^{k-1} = K^{k-1}$.
\end{itemize}
\end{defi}

\begin{propo}[\cite{Banagl2010}]
Given any integer $k \geq 3$, every simply connected CW-complex $K$ can be completed to an $k$-truncation structure $(K,K/k,h,\tr{k}{K})$.
\end{propo}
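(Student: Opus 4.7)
The approach is to modify the cellular structure on $K^k$ rel $K^{k-1}$ until the group of cellular $k$-cycles is spanned by a subset of the $k$-cells. On the algebraic side, the boundary operator $\partial_k \col C_k(K) \to C_{k-1}(K)$ lands in a free abelian group, so its image $B_{k-1}(K)$ is itself free, and the short exact sequence $0 \to Z_k \to C_k(K) \to B_{k-1}(K) \to 0$ splits. I choose a basis $\{z_1, \dots, z_p\}$ of $Z_k = \ker \partial_k$ and extend it to a basis $\{z_1, \dots, z_p, w_1, \dots, w_q\}$ of $C_k(K)$; the change from the original cellular basis $\{e_\alpha\}$ to this new one is recorded by a matrix $M \in GL_m(\mathbf{Z})$ with $m = p + q$.

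Geometrically, $K$ is simply connected and $k \geq 3$, so $K^{k-1}$ contains the $2$-skeleton of $K$ and is simply connected, and $\pi_{k-1}(K^{k-1})$ is an abelian group. Hence the $m$-tuple $\Phi = ([f_1], \dots, [f_m]) \in \pi_{k-1}(K^{k-1})^m$ of attaching maps of $K^k$ carries a linear $GL_m(\mathbf{Z})$-action. I define $K/k$ as $K^{k-1}$ with $m$ new $k$-cells labelled by $\{z_1, \dots, z_p, w_1, \dots, w_q\}$ and attached along the components of $M \cdot \Phi$. Decomposing $M$ into a product of elementary matrices, I realize $h \col K/k \to K^k$ as a composite of cell-sliding moves (each elementary transvection $[f_i] \mapsto [f_i] + n[f_j]$ is realized by sliding $e_i$ over $n$ copies of $e_j$; permutations and sign changes are evidently realizable); each move is a cellular homotopy equivalence rel $K^{k-1}$, and the composite $h$ is the identity on $K^{k-1}$. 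By construction, the cellular differential of $K/k$ kills each $z_i$ and sends each $w_j$ to $\partial_K(w_j)$, so the group of $k$-cycles of $K/k$ is freely generated by the cells $z_1, \dots, z_p$, i.e.\ it has a basis of cells.

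Finally, I set $\tr{k}{K} \subset K/k$ to be the subcomplex $K^{k-1} \cup \overline{w}_1 \cup \cdots \cup \overline{w}_q$. Then $(\tr{k}{K})^{k-1} = K^{k-1}$ is immediate. In degrees $< k$, the cellular chain complex of $\tr{k}{K}$ agrees with that of $K$; in degree $k$ it is the subgroup $\langle w_1, \dots, w_q \rangle$, which projects isomorphically onto $C_k(K)/Z_k \cong B_{k-1}(K)$. Thus $\partial_k$ restricted to $C_k(\tr{k}{K})$ is injective with image $B_{k-1}(K)$, yielding $H_k(\tr{k}{K};\mathbf{Z}) = 0$ and $H_{k-1}(\tr{k}{K};\mathbf{Z}) = Z_{k-1}(K)/B_{k-1}(K) = H_{k-1}(K;\mathbf{Z})$; the lower degrees agree automatically and higher degrees vanish by dimension.

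The delicate point is the geometric realization of the $GL_m(\mathbf{Z})$-change of basis in the second step: verifying that an elementary transvection on the tuple of attaching maps genuinely produces a CW-complex cellularly homotopy equivalent to $K^k$ rel $K^{k-1}$. This uses crucially the abelian structure on $\pi_{k-1}(K^{k-1})$ afforded by $k \geq 3$, together with an explicit cell-slide homotopy; once that building block is in place, the rest of the argument is bookkeeping on the cellular chain complex.
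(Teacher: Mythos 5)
Your argument is correct and is essentially the construction of the cited reference: the paper itself gives no proof of this proposition but defers to Banagl's book, and your route --- splitting $C_k(K)=Z_k\oplus Y_k$ via freeness of $B_{k-1}(K)$, realizing the integral change of basis by elementary cell slides in the abelian group $\pi_{k-1}(K^{k-1})$ (with $k\geq 3$ guaranteeing that $K^{k-1}$ is simply connected so that basepoints cause no trouble), and taking $\tr{k}{K}$ to be the subcomplex carried by the non-cycle cells --- is exactly that construction. It also matches the parallel construction the paper carries out explicitly for the Lagrangian truncation in Section \ref{subsec:Lag_truncation}.
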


For the construction of the quadruple $(K,K/k,h,\tr{k}{K})$ we send the reader to \cite{Banagl2010}. A similar construction will be explained later in this paper for the Lagrangian truncation.

Given any $k$-truncation structure $(K,K/k,h,\tr{k}{K})$, we have a homotopy class of maps $f : \tr{k}{K} \rightarrow K$ given by the composition of the following maps
\[
\tr{k}{K} \hookrightarrow K/k \overset{h}{\rightarrow} K^{k} \hookrightarrow K
\]
where the maps at the extremities are cellular inclusions.

Let now $X$ be a compact, connected oriented normal pseudomanifold of dimension $n$ with isolated singularities $\Sigma= \lbrace \sigma_{1}, \dots, \sigma_{\nu} \rbrace$ of simply connected links $L_{i}$, the $L_{i}$ are then $(n-1)$-dimensional compact manifolds. 

Given a Goresky MacPherson perversity $\overline{p}$, put $k(\overline{p}) := n-1-\overline{p}$, we apply the $k(\overline{p})$-truncation on each links $L_{i}$ to get a family of CW-complexes $\tr{k(\overline{p})}{L_{i}}$ together with homotopy classes of maps
\[
f_{i} \col \tr{k(\overline{p})}{L_{i}} \rightarrow L_{i}.
\]
We denote by 
\[
\tr{k(\overline{p})}{L}(\Sigma,X) \overset{f}{\longrightarrow} L(\Sigma,X)
\]
the disjoint union of these maps, with $f := \sqcup_{\sigma_{i}} f_{i}$.

We define the two following homotopy cofibers.

First, consider the homotopy cofiber of the map $f_{i}$, which we denote by $\cotr{k(\overline{p})}{L_{i}}$ and call it the $k(\overline{p})$-cotruncation of $L_{i}$. We have maps 
\[
f^{i}  \col L_{i} \longrightarrow \cotr{k(\overline{p})}{L_{i}}
\]
and
\[
H_{r}(\cotr{k(\overline{p})}{L_{i}};\mathbf{Z}) \cong
\begin{cases}
\mathbf{Z}   &  r = 0, \\
0            &  1 \leq r < k(\overline{p}), \\
H_{r}(L_{i}; \mathbf{Z}) &  r \geq k(\overline{p}).
\end{cases}
\]

We then have a family of maps
\[
\partial X_{reg} = L(\Sigma,X) = \bigsqcup_{i} L_{i} \longrightarrow \bigsqcup_{i} \cotr{k(\overline{p})}{L_{i}}.
\]

Then, we define by $\cotr{k(\overline{p})}{L}(\Sigma,X)$ to be the homotopy cofiber of the map $f$.

\begin{defi}\label{def:intersectionspace}
\begin{enumerate}
\item The intersection space $\I{p}{X}$ of the space $X$ is the homotopy pushout of the solid arrows diagram.
\[
\begin{tikzpicture}
\matrix (m)[matrix of math nodes, row sep=2em, column sep=5em, text height=1.5ex, text depth=0.25ex]
{ L(\Sigma,X)                         & X_{reg}       \\
  \cotr{k(\overline{p})}{L}(\Sigma,X) & \I{p}{X} \\};

\path[->]
(m-1-1) edge node[] {} (m-1-2);
\path[dashed,->]
(m-2-1) edge node[] {} (m-2-2);

\path[->]
(m-1-1) edge node[] {} (m-2-1);
\path[dashed,->]
(m-1-2) edge node[] {} (m-2-2);
\end{tikzpicture}
\]

\item The normal intersection space $\nI{p}{X}$ of the space $X$ is the homotopy pushout of the solid arrows diagram.
\[
\begin{tikzpicture}
\matrix (m)[matrix of math nodes, row sep=2em, column sep=5em, text height=1.5ex, text depth=0.25ex]
{ L(\Sigma,X)                                 & X_{reg}       \\
  \bigsqcup_{i} \cotr{k(\overline{p})}{L_{i}} & \nI{p}{X} \\};

\path[->]
(m-1-1) edge node[] {} (m-1-2);
\path[dashed,->]
(m-2-1) edge node[] {} (m-2-2);

\path[->]
(m-1-1) edge node[] {} (m-2-1);
\path[dashed,->]
(m-1-2) edge node[] {} (m-2-2);
\end{tikzpicture}
\]
\end{enumerate}
\end{defi}

When $X$ is normal with only one isolated singularity, there is no difference between the two definitions. Differences may arise only for the first homology group. In the first case, which is the original definition of \cite{Banagl2010}, we have 
\[
H_{1}(\I{p}{X})= H_{1}(X_{reg}) \oplus \mathbf{Q}^{\beta_{0}(\partial X_{reg})-1}
\]
where $\beta_{0}(\partial X_{reg})$ is the number of connected components of $\partial X_{reg}$. For the normal intersection space $\nI{p}{X}$ we have
\[
H_{1}(\nI{p}{X})= H_{1}(X_{reg}).
\]

We now determine rational models of the truncation, cotruncation and the intersection space of $X$. Let $L$ be a simply connected CW-complex of finite dimension and $(\Min{L},d)$ be its unique minimal Sullivan model. 

Let us make some changes which will be useful when working rational models. Recall that $L$ admits a cellular model $\widetilde{L}$ for which the differential in the integral cellular chain complex is identically zero by theorem \ref{thm:Q_cell_model}. Denote by $\varphi \col \widetilde{L} \rightarrow L$ the rational homotopy equivalence given by this theorem. 

Since $(\tr{k}{L})^{k-1} = L^{k-1}$, the first point of the remark \ref{rmk:Q_cell_model} implies that
\[
\widetilde{\tr{k}{L}}^{k-1} = \widetilde{L}^{k-1}.
\]
By definition $\tr{k}{L}$ is a CW-complex of dimension $k$ such that for its cellular chain complex $(C_{\ast}(\tr{k}{L}), \partial_{\ast})$ we have $\ker \partial_{k}=0$. The CW-complex $\widetilde{\tr{k}{L}}$ is of dimension $k-1$ and is equal to $\widetilde{L}^{k-1}$ by the second point of the remark \ref{rmk:Q_cell_model}.

Consider the following diagram
\[
\begin{tikzpicture}
\matrix (m)[matrix of math nodes, row sep=3em, column sep=5em, text height=2ex, text depth=0.25ex]
{ \widetilde{L}^{k-1}  &      &       & \widetilde{L}   \\
  \tr{k}{L}            &  L/k & L^{k} & L\\};

\path[->]
(m-1-1) edge node[auto] {$\mathrm{incl}$} (m-1-4);
\path[->]
(m-2-1) edge node[auto,swap] {$i$} (m-2-2);
\path[->]
(m-2-2) edge node[auto,swap] {$h$} (m-2-3);
\path[->]
(m-2-3) edge node[auto,swap] {$j$} (m-2-4);
\path[->]
(m-1-1) edge node[auto,swap] {$\varphi_{|}$} (m-2-1);
\path[->]
(m-1-4) edge node[auto] {$\varphi$} (m-2-4);
\end{tikzpicture}
\]
where $\varphi_{|}$ is the restriction of $\varphi$ to the $(k-1)$-cellular skeleton of $\widetilde{L}$, which is then a rational homotopy equivalence. Since $h$ is a homotopy equivalence relative to $L^{k-1}$ this diagram is commutative. The fact that $\varphi$ is a rational homotopy equivalences imply that the minimal models of $L$ and $\widetilde{L}$ are isomorphic. The same is true for the minimal models of $\tr{k}{L}$ and $\widetilde{L}^{k-1}$.

\begin{propo}\label{propo:truncation}
For $k \geq 0$. Let $m : (\Min{L},d) \rightarrow (A^{\ast}(L),d)$ be a rational model of $L$. Denote by $\coim{k-1}$ a supplement of 
\[
\ker (d^{k-1} : A^{k-1}(L) \rightarrow A^{k}(L))
\]
and by $I_{k-1}$ be the differential ideal of $A^{\ast}(L)$ generated by $\coim{k-1} \oplus A^{\geq k}(L)$.

A rational model of $\tr{k}{L}$ is given by $A^{\ast}(L)/I_{k-1}$ and a Sullivan representative of $f : \tr{k}{L} \longrightarrow L$ is given by the projection to the equivalence class
\[
A^{\ast}(L) \twoheadrightarrow A^{\ast}(L)/I_{k-1}.
\]
\end{propo}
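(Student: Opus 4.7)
The plan is to prove the proposition in three steps: verify that $I_{k-1}$ is automatically a differential ideal, compute the cohomology of the quotient and match it with $H^{\ast}(\tr{k}{L})$, then identify the projection $p \col A^{\ast}(L) \twoheadrightarrow A^{\ast}(L)/I_{k-1}$ with a Sullivan representative of $f$.

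For the first two steps, since $d(\coim{k-1}) \subset A^{k}(L) \subset I_{k-1}$ and $d(A^{\geq k}(L)) \subset A^{\geq k+1}(L) \subset I_{k-1}$, the differential sends the generating set into the ideal, so the ideal generated by $\coim{k-1} \oplus A^{\geq k}(L)$ is already closed under $d$ without any further closure. Writing $B = A^{\ast}(L)/I_{k-1}$ and using the decomposition $A^{k-1}(L) = \ker d^{k-1} \oplus \coim{k-1}$, a direct degree-by-degree computation gives
\[
B^{r} \cong \begin{cases} A^{r}(L) & r < k-1, \\ \ker d^{k-1} & r = k-1, \\ 0 & r \geq k. \end{cases}
\]
The differential out of $B^{k-1}$ vanishes for dimension reasons, hence $H^{r}(B) \cong H^{r}(L)$ for $r < k$ and $H^{r}(B) = 0$ for $r \geq k$, matching the cohomology prescribed by \eqref{eqn:ntroncation}. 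The induced map $p^{\ast} \col H^{\ast}(L) \to H^{\ast}(B)$ is then the canonical projection, namely the identity in degrees strictly below $k$ and zero above, exactly the behaviour of $f^{\ast}$.

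For the third step, I would use remark \ref{rmk:Q_cell_model} to replace $\tr{k}{L}$ by the $(k-1)$-skeleton $\widetilde{L}^{k-1}$ of the rational cellular model of $L$ via the restricted equivalence $\varphi_{|}$; the inclusion $\widetilde{L}^{k-1} \hookrightarrow \widetilde{L}$ then corresponds to $f$ in the rational homotopy category. Lifting $p$ through the minimal Sullivan model $m \col \Min{L} \to A^{\ast}(L)$ yields a CDGA morphism $\Min{L} \to B$, and since $B$ is concentrated in degrees $\leq k-1$ while $\widetilde{L}^{k-1}$ is a CW-complex of the same dimension, the obstruction theory for Sullivan lifts collapses above degree $k-1$ and the composite is forced to be a Sullivan representative of $f$ up to homotopy in $\CDGA{Q}$.

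The principal obstacle I anticipate is this last upgrade: passing from the cohomological matching $p^{\ast} = f^{\ast}$ to a genuine equality in the homotopy category of CDGAs. Two CDGAs with isomorphic cohomology need not be quasi-isomorphic --- they may carry different Massey products --- so a cohomology comparison alone is insufficient. The argument hinges on the vanishing of $B$ above degree $k-1$ together with the corresponding skeletal dimension of $\widetilde{L}^{k-1}$, which jointly annihilate the higher-degree obstructions and force the lift of $p$ to be unique up to homotopy.
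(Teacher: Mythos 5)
Your first two steps are sound: the generating set $\coim{k-1} \oplus A^{\geq k}(L)$ is already sent into the ideal by $d$, the degree-wise identification of $A^{\ast}(L)/I_{k-1}$ is correct, and the cohomology of the quotient matches \eqref{eqn:ntroncation}. You are also right to flag that this cohomological matching is not the real content of the proposition. The problem is that your third step does not close the gap you yourself identify. The claim that ``the obstruction theory for Sullivan lifts collapses above degree $k-1$'' because $B := A^{\ast}(L)/I_{k-1}$ vanishes in degrees $\geq k$ is not a valid argument: a CDGA concentrated in degrees $\leq k-1$ is not determined up to quasi-isomorphism by its cohomology (Massey products of classes whose total degree is still $\leq k-1$ survive in the quotient), so vanishing above degree $k-1$ annihilates nothing relevant. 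More fundamentally, the lifting you invoke presupposes a quasi-isomorphism between $B$ and a model of $\tr{k}{L}$ against which to lift $p \circ m$ --- but producing such a quasi-isomorphism, compatibly with the maps from $L$, is precisely the statement to be proved; as written the step is circular.

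The paper supplies the missing construction. After replacing $\tr{k}{L}$ by $\widetilde{L}^{k-1}$ as you do, it takes the relative minimal model
\[
\Min{\widetilde{L}} \overset{i}{\longrightarrow} \Min{\widetilde{L}} \otimes \wedge V \overset{g}{\longrightarrow} \Min{\widetilde{L}^{k-1}}
\]
of the skeletal inclusion, and observes the key structural fact: since the inclusion induces an isomorphism on $H^{\ast}$ for $\ast \leq k-1$, every generator of $V$ either has degree $\geq k$ or has degree $k-1$ and is not closed. This is exactly what makes the explicit map $\overline{M} \col \Min{\widetilde{L}} \otimes \wedge V \to A^{\ast}(L)/I_{k-1}$, defined by $\overline{M}(a) = [M(a)]$ on $\Min{\widetilde{L}}$ and $\overline{M}(V) = 0$, a well-defined morphism of CDGAs (products involving $V$ land in degrees $\geq k$, hence in $I_{k-1}$), a quasi-isomorphism, and compatible with the square formed by $i$, $M$ and the projection $p$. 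That single explicit quasi-isomorphism out of a cofibrant replacement of $\Min{\mathrm{incl}}$ is what simultaneously identifies $A^{\ast}(L)/I_{k-1}$ as a model of $\tr{k}{L}$ and $p$ as a Sullivan representative of $f$; your argument needs it and does not construct it.
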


\begin{proof}
By the discussion above, we have the isomorphism of minimal models
\[
\Min{\varphi} \col (\Min{L},d) \overset{\cong}{\longrightarrow} (\Min{\widetilde{L}},d).
\]
Composing its inverse with the quasi-isomorphism $m$ gives us the following rational model of $L$
\[
M := m \circ \Min{\varphi}^{-1} \col (\Min{\widetilde{L}},d) \longrightarrow (A^{\ast}(L),d).
\]

Consider now the map $\Min{\mathrm{incl}} \col \Min{\widetilde{L}} \longrightarrow \Min{\widetilde{L}^{k-1}}$. By \cite[theorem 2.28 p.66]{Felix2008} there is a relative minimal model
\[
\begin{tikzpicture}
\matrix (m)[matrix of math nodes, row sep=3em, column sep=5em, text height=2ex, text depth=0.25ex]
{ \Min{\widetilde{L}}  &  \Min{\widetilde{L}^{k-1}}  \\
  \Min{\widetilde{L}}  &  \Min{\widetilde{L}} \otimes \wedge V \\};

\path[->]
(m-1-1) edge node[auto] {$\Min{\mathrm{incl}}$} (m-1-2);
\path[->]
(m-2-1) edge node[auto,swap] {$i$} (m-2-2);
\path[->]
(m-1-1) edge node[auto,swap] {=} (m-2-1);
\path[<-]
(m-1-2) edge node[auto] {$g$} (m-2-2);
\end{tikzpicture}
\]
where $g$ is a quasi-isomorphism and $i$ the canonical inclusion. The fact that $H^{\ast}(\Min{\widetilde{L}})=H^{\ast}(\Min{\widetilde{L}^{k-1}})=H^{\ast}(L)$ for $\ast \leq k-1$ implies that the elements of $V$ are either of degree greater than or equal to $k$, or of degree $k-1$ and not in $\ker d^{k-1}$.

Let then $I_{k-1}$ the differential ideal defined in the proposition and consider the following diagram.
\[
\begin{tikzpicture}
\matrix (m)[matrix of math nodes, row sep=3em, column sep=5em, text height=2ex, text depth=0.25ex]
{ \Min{\widetilde{L}} & \Min{\widetilde{L}} \otimes \wedge V   \\
  A^{\ast}(L)         &   A^{\ast}(L)/I_{k-1} \\};

\path[->]
(m-1-1) edge node[auto] {$i$} (m-1-2);
\path[->]
(m-2-1) edge node[auto,swap] {$p$} (m-2-2);
\path[->]
(m-1-1) edge node[auto,swap] {$M$} (m-2-1);
\path[dashed,->]
(m-1-2) edge node[auto] {$\overline{M}$} (m-2-2);
\end{tikzpicture}
\]
Where $p$ is the projection map. We define the map $\overline{M}$ by
\[
\begin{cases}
\overline{M}(a) = [M(a)] & a \in \Min{\widetilde{L}}, \\
\overline{M}(\wedge V) = 0. & \\
\end{cases}
\]
The image of a product is defined by 
\[
\begin{cases}
\overline{M}(ab) = [M(a)M(b)] & a,b \in \Min{\widetilde{L}}, \\
\overline{M}(vw) = 0 = \overline{M}(v)\overline{M}(w), & v,w \in V, \\
\end{cases}
\]
and for all $v \in V$ and all $a \in \Min{\widetilde{L}}$, the degree of $av$ is greater than or equal to $k$, so we define
\[
\overline{M}(av):=0=\overline{M}(a)\overline{M}(v).
\]

This diagram commutes and $\overline{M}$ defines a quasi-isomorphism.
\end{proof}

We now determine a rational model for the intersection spaces and the normal intersection spaces. The cotruncation being a homotopy cofiber, the next lemma follows from \cite[Proposition 13.6]{Felix2001}

\begin{lem}\label{lem:cotruncation}
The $k$-cotruncation of $L$, $\cotr{k}{L}$, being defined as the homotopy cofiber of the map $\tr{k}{L} \rightarrow L$, a rational model is given by 
\[
\mathbf{Q} \oplus \ker p = \mathbf{Q} \oplus I_{k-1}
\]
where $p$ is the map $ p \col A^{\ast}(L) \twoheadrightarrow A^{\ast}(L)/I_{k-1}.$
\end{lem}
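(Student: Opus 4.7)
The plan is to apply \cite[Proposition 13.6]{Felix2001}, which supplies a Sullivan model of a homotopy cofiber whenever a surjective CDGA representative of the defining map is available. The relevant map here is $f \colon \tr{k}{L} \to L$, and Proposition \ref{propo:truncation} hands us precisely such a surjection: the projection $p \colon A^{\ast}(L) \twoheadrightarrow A^{\ast}(L)/I_{k-1}$ is a Sullivan representative of $f$, whose homotopy cofiber is by definition $\cotr{k}{L}$.

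First I would observe that $\ker p = I_{k-1}$ is a differential ideal of $A^{\ast}(L)$ and so inherits a restricted differential and multiplication; adjoining a unit in degree zero produces the augmented CDGA $(\mathbf{Q} \oplus I_{k-1}, d)$. Applying the cited proposition to the surjective model $p$ then identifies this CDGA as a rational model of $\cotr{k}{L}$, which is the assertion of the lemma.

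As a consistency check I would compare the cohomologies on both sides. The long exact sequence of the cofibration, combined with the surjectivity of $f^{\ast}$ (evident from the description of $H^{\ast}(\tr{k}{L})$ coming out of Proposition \ref{propo:truncation}), forces the connecting homomorphism to vanish and yields $\tilde{H}^{r}(\cotr{k}{L}) = \ker f^{\ast}_{r}$, which is $H^{r}(L)$ for $r \geq k$ and zero otherwise. On the model side, the fact that $\coim{k-1}$ is by construction a complement of $\ker d^{k-1}$ in $A^{k-1}(L)$ is exactly what forces $H^{k-1}(\mathbf{Q} \oplus I_{k-1}) = 0$, and a short computation confirms $H^{r}(I_{k-1}) = H^{r}(L)$ in the remaining degrees $r \geq k$. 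I do not expect a genuine obstacle here: the statement is essentially a direct translation of \cite[Proposition 13.6]{Felix2001}, the only minor subtlety being the bookkeeping of the unit summand and the careful treatment of the degree $k-1$ part of the ideal.
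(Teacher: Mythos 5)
Your argument is correct and is exactly the route the paper takes: the paper simply invokes \cite[Proposition 13.6]{Felix2001} applied to the surjective Sullivan representative $p \col A^{\ast}(L) \twoheadrightarrow A^{\ast}(L)/I_{k-1}$ furnished by Proposition \ref{propo:truncation}, identifying $\mathbf{Q} \oplus \ker p = \mathbf{Q} \oplus I_{k-1}$ as a model of the cofiber. Your additional cohomology consistency check is more detail than the paper provides, but there is no discrepancy in approach.
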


\begin{propo}\label{propo:rat_model_IpX}
Let $X$ be a compact, connected oriented pseudomanifold of dimension $n$ with only isolated singularities $\Sigma= \lbrace \sigma_{1}, \dots, \sigma_{\nu} \rbrace$ of simply connected links $L_{i}=L(\sigma_{i},X)$. Let $\overline{p}$ be a Goresky MacPherson perversity and 
\[
\phi \col (A^{\ast}(X_{reg}),d) \rightarrow (A^{\ast}(L(\Sigma,X)),d)
\]
a surjective model of the inclusion $i \col L(\Sigma,X) = \partial X_{reg} \rightarrow X_{reg}$. A rational model of $\I{p}{X}$ is given by
\[
\AI{p}{X} := (A^{\ast}(X_{reg}),d) \oplus_{A^{\ast}(L)}(\mathbf{Q} \oplus I_{k(\overline{p})},d)
\]
\noindent where $(A^{\ast}(X_{reg}),d)$ is a rational model of the regular part of the pseudomanifold and $(\mathbf{Q} \oplus I_{k(\overline{p})},d)$ a rational model of $\cotr{k(\overline{p})}{L(\Sigma,X)}$.
\end{propo}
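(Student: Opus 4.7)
The plan is to apply the standard principle of rational homotopy theory that a homotopy pushout of simply connected spaces is modeled by the strict pullback of a corresponding diagram of CDGAs, as soon as one of the two CDGA maps is a surjection. Since $\I{p}{X}$ is by Definition \ref{def:intersectionspace} the homotopy pushout of
\[
\cotr{k(\overline{p})}{L(\Sigma,X)} \longleftarrow L(\Sigma,X) \longrightarrow X_{reg},
\]
the strategy is to assemble a commutative square of CDGAs dual to this cospan, verify the surjectivity hypothesis, and identify the resulting strict pullback with $\AI{p}{X}$.

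For the right-hand arrow, the hypothesis provides a surjective model $\phi \col A^{\ast}(X_{reg}) \twoheadrightarrow A^{\ast}(L(\Sigma,X))$ of the boundary inclusion $L(\Sigma,X)\hookrightarrow X_{reg}$. For the left-hand arrow, Lemma \ref{lem:cotruncation} produces $(\mathbf{Q}\oplus I_{k(\overline{p})},d)$ as a rational model of $\cotr{k(\overline{p})}{L(\Sigma,X)}$, realized concretely as the kernel-plus-unit of the surjective Sullivan representative $A^{\ast}(L(\Sigma,X)) \twoheadrightarrow A^{\ast}(L(\Sigma,X))/I_{k(\overline{p})}$ of the truncation map. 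Dualizing the cofiber sequence $\tr{k(\overline{p})}{L(\Sigma,X)} \to L(\Sigma,X) \to \cotr{k(\overline{p})}{L(\Sigma,X)}$, the natural inclusion
\[
(\mathbf{Q}\oplus I_{k(\overline{p})},d) \hookrightarrow (A^{\ast}(L(\Sigma,X)),d)
\]
is then a Sullivan representative of the cotruncation map $L(\Sigma,X) \to \cotr{k(\overline{p})}{L(\Sigma,X)}$.

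It now suffices to form the strict pullback in CDGAs
\[
A^{\ast}(X_{reg}) \times_{A^{\ast}(L(\Sigma,X))} (\mathbf{Q}\oplus I_{k(\overline{p})}),
\]
which is exactly the CDGA $\AI{p}{X}$ of the statement (the symbol $\oplus_{A^{\ast}(L)}$ denoting the fiber product over the common target). Since $\phi$ is surjective, \cite[Proposition 13.6]{Felix2001} guarantees that this strict pullback computes the homotopy pullback of CDGAs and is thus a rational model of the homotopy pushout $\I{p}{X}$.

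The main point needing attention, essentially the only nontrivial check, is ensuring that the inclusion $(\mathbf{Q}\oplus I_{k(\overline{p})}) \hookrightarrow A^{\ast}(L(\Sigma,X))$ produced by Lemma \ref{lem:cotruncation} is \emph{genuinely} a Sullivan representative of the geometric map $L(\Sigma,X)\to \cotr{k(\overline{p})}{L(\Sigma,X)}$, and not merely of some map into a space of the correct rational homotopy type. This is forced by the mapping-cone description of the cotruncation and the explicit construction of Proposition \ref{propo:truncation}; if fuller detail is desired, one verifies it by invoking the uniqueness up to quasi-isomorphism of cofibers in the Sullivan model category.
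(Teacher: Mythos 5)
Your proposal is correct and follows essentially the same route as the paper: both model the homotopy pushout defining $\I{p}{X}$ by the fiber product of CDGAs over $A^{\ast}(L(\Sigma,X))$, taking the surjective model $\phi$ on one side and the inclusion $(\mathbf{Q}\oplus I_{k(\overline{p})},d) \hookrightarrow (A^{\ast}(L(\Sigma,X)),d)$ from Lemma \ref{lem:cotruncation} on the other. The only difference is one of detail: where you quote the general fiber-product-models-pushout principle, the paper verifies it directly by comparing the chosen fiber product with $\Apl{X_{reg}} \oplus_{\Apl{L(\Sigma,X)}} \Apl{\cotr{k(\overline{p})}{L(\Sigma,X)}}$ through a morphism of short exact sequences and the five lemma.
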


\begin{proof}
The intersection space $\I{p}{X}$ of the space $X$ is the homotopy pushout of the diagram. 
\[
\begin{tikzpicture}
\matrix (m)[matrix of math nodes, row sep=2em, column sep=5em, text height=1.5ex, text depth=0.25ex]
{ L(\Sigma,X)                            & X_{reg}       \\
  \cotr{k(\overline{p})}{L(\Sigma,X)}    & \I{p}{X} \\};

\path[->]
(m-1-1) edge node[auto] {$i$} (m-1-2);
\path[->]
(m-2-1) edge node[] {} (m-2-2);

\path[->]
(m-1-1) edge node[] {} (m-2-1);
\path[->]
(m-1-2) edge node[] {} (m-2-2);
\end{tikzpicture}
\]

Then applying $\Apl{-}$ we have a diagram of pullback, 
\[
\begin{tikzpicture}
\matrix (m)[matrix of math nodes, row sep=2em, column sep=5em, text height=1.5ex, text depth=0.25ex]
{ \Apl{\I{p}{X}}   & \Apl{\cotr{k(\overline{p})}{L(\Sigma,X)}}       \\
  \Apl{X_{reg}}    & \Apl{L(\Sigma,X)} \\};

\path[->]
(m-1-1) edge node[] {} (m-1-2);
\path[->]
(m-2-1) edge node[auto,swap] {$\Apl{i}$} (m-2-2);

\path[->]
(m-1-1) edge node[] {} (m-2-1);
\path[->]
(m-1-2) edge node[] {} (m-2-2);
\end{tikzpicture}
\]
and then the quasi isomorphism.
\[
\Apl{\I{p}{X}} \simeq \Apl{X_{reg}} \oplus_{\Apl{L(\Sigma,X)}} \Apl{\cotr{k(\overline{p})}{L(\Sigma,X)}}.
\]

Given the rational models of $X_{reg}$, $L(\Sigma,X)$ and $\cotr{k(\overline{p})}{L(\Sigma,X)}$ thanks to the lemma \ref{lem:cotruncation}, we get a map 
\[
(A^{\ast}(X_{reg}),d) \oplus_{A^{\ast}(L(\Sigma,X))}(\mathbf{Q} \oplus I_{k(\overline{p})},d) \rightarrow \Apl{X_{reg}} \oplus_{\Apl{L(\Sigma,X)}} \Apl{\cotr{k(\overline{p})}{L(\Sigma,X)}}.
\]

With the surjective model $\phi : (A^{\ast}(X_{reg}),d) \rightarrow (A^{\ast}(L(\Sigma,X)),d)$, we get a morphism of short exact sequences,
\[
\begin{tikzpicture}
\matrix (m)[matrix of math nodes, row sep=2em, column sep=1em, text height=1.5ex, text depth=0.25ex]
{ \ker \phi      & (A^{\ast}(X_{reg}),d_{X_{reg}}) \oplus_{A^{\ast}(L(\Sigma,X))}(\mathbf{Q} \oplus I_{k(\overline{p})},d)   & (\mathbf{Q} \oplus I_{k(\overline{p})},d)    \\
  \ker \Apl{i} & \Apl{X_{reg}} \oplus_{\Apl{L(\Sigma,X)}} \Apl{\cotr{k(\overline{p})}{L(\Sigma,X)}}                         &  \Apl{\cotr{k(\overline{p})}{L(\Sigma,X)}}           \\};

\path[->]
(m-1-1) edge node[] {} (m-1-2);
\path[->]
(m-1-2) edge node[] {} (m-1-3);

\path[->]
(m-2-1) edge node[] {} (m-2-2);
\path[->]
(m-2-2) edge node[] {} (m-2-3);

\path[->]
(m-1-1) edge node[] {} (m-2-1);
\path[->]
(m-1-2) edge node[] {} (m-2-2);
\path[->]
(m-1-3) edge node[] {} (m-2-3);
\end{tikzpicture}
\]
The result follows from an application of the five lemma.
\end{proof}

\begin{propo}\label{propo:rat_model_nIpX}
Let $X$ be a compact, connected oriented pseudomanifold of dimension $n$ with only isolated singularities $\Sigma= \lbrace \sigma_{1}, \dots, \sigma_{\nu} \rbrace$ of simply connected links $L_{i}=L(\sigma_{i},X)$. Let $\overline{p}$ be a Goresky MacPherson perversity and 
\[
\phi \col (A^{\ast}(X_{reg}),d) \rightarrow (A^{\ast}(L(\Sigma,X)),d)
\]
a surjective model of the inclusion $i \col L(\Sigma,X) = \partial X_{reg} \rightarrow X_{reg}$. A rational model of the normal intersection space $\nI{p}{X}$ is given by
\[
\nAI{p}{X} := (A^{\ast}(X_{reg}),d) \oplus_{A^{\ast}(L)}(\bigoplus_{i} \mathbf{Q} \oplus I_{k(\overline{p},i)},d)
\]
where $(A^{\ast}(X_{reg}),d)$ is a rational model of the regular part of the pseudomanifold and $(\mathbf{Q} \oplus I_{k(\overline{p},i)},d)$ a rational model of $\cotr{k(\overline{p})}{L_{i}}$.
\end{propo}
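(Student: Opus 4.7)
The plan is to imitate the proof of Proposition \ref{propo:rat_model_IpX} almost verbatim, changing only the description of the bottom-left corner of the defining pushout square. First I would apply the contravariant functor $\Apl{-}$ to the homotopy pushout diagram of Definition \ref{def:intersectionspace} defining $\nI{p}{X}$; this turns it into a homotopy pullback square of CDGAs and yields the quasi-isomorphism
\[
\Apl{\nI{p}{X}} \simeq \Apl{X_{reg}} \oplus_{\Apl{L(\Sigma,X)}} \Apl{\bigsqcup_{i} \cotr{k(\overline{p})}{L_{i}}}.
\]

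Second I would identify a rational model of the disjoint union $\bigsqcup_i \cotr{k(\overline{p})}{L_i}$. For a finite disjoint union of connected spaces a rational model is given by the product of models of the components (with one copy of $\mathbf{Q}$ in degree $0$ per component). Applying Lemma \ref{lem:cotruncation} to each link $L_i$ separately therefore produces $\bigoplus_i(\mathbf{Q} \oplus I_{k(\overline{p},i)})$ as a rational model of $\bigsqcup_i \cotr{k(\overline{p})}{L_i}$, and the structure map into $A^{\ast}(L(\Sigma,X)) = \bigoplus_i A^{\ast}(L_i)$ is simply the direct sum, over the $\nu$ singularities, of the inclusions $\mathbf{Q} \oplus I_{k(\overline{p},i)} \hookrightarrow A^{\ast}(L_i)$ given componentwise by Lemma \ref{lem:cotruncation}.

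Third, using the surjective model $\phi \col A^{\ast}(X_{reg}) \to A^{\ast}(L(\Sigma,X))$ of the inclusion $i$, I would insert the candidate CDGA $\nAI{p}{X}$ and the actual pullback $\Apl{X_{reg}} \oplus_{\Apl{L(\Sigma,X)}} \Apl{\bigsqcup_i \cotr{k(\overline{p})}{L_i}}$ into a morphism of short exact sequences whose outer columns are respectively $\ker \phi \to \ker \Apl{i}$ and $\bigoplus_i(\mathbf{Q} \oplus I_{k(\overline{p},i)}) \to \Apl{\bigsqcup_i \cotr{k(\overline{p})}{L_i}}$, both of which are quasi-isomorphisms by construction. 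Applying the five lemma concludes the proof exactly as in Proposition \ref{propo:rat_model_IpX}.

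The main point requiring care, and indeed the only step where the proof genuinely diverges from the one-singularity case, is step two: checking that the product-of-models description of $\bigsqcup_i \cotr{k(\overline{p})}{L_i}$ is compatible, as a CDGA over $A^{\ast}(L(\Sigma,X))$, with the attaching map coming from the decomposition $L(\Sigma,X) = \bigsqcup_i L_i$. Once this compatibility of structure maps is in place, the pullback computation and the five-lemma argument go through formally as before.
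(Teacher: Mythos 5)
Your proposal is correct and follows exactly the route the paper takes: the paper's own proof simply says the argument is identical to that of Proposition \ref{propo:rat_model_IpX} with the bottom-left corner of the pushout replaced by $\bigsqcup_{i}\cotr{k(\overline{p})}{L_{i}}$, which is precisely what you carry out (and you usefully make explicit the componentwise identification of the model of the disjoint union, which the paper leaves implicit).
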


\begin{proof}
The proof is exactly the same as the previous proposition unless the normal intersection space $\nI{p}{X}$ is the homotopy pushout of the following diagram.
\[
\begin{tikzpicture}
\matrix (m)[matrix of math nodes, row sep=2em, column sep=5em, text height=1.5ex, text depth=0.25ex]
{ L(\Sigma,X)                            & X_{reg}       \\
  \bigsqcup_{i}\cotr{k(\overline{p})}{L_{i}}    & \nI{p}{X} \\};

\path[->]
(m-1-1) edge node[auto] {$i$} (m-1-2);
\path[->]
(m-2-1) edge node[] {} (m-2-2);

\path[->]
(m-1-1) edge node[] {} (m-2-1);
\path[->]
(m-1-2) edge node[] {} (m-2-2);
\end{tikzpicture}
\]
\end{proof}

In the odd dimensional case, there is a class of pseudomanifolds $X$ for which the truncations $\tr{k(\overline{m})}{}$ and $\tr{k(\overline{n})}{}$ will coincide.

\begin{defi}
\label{def:Wittspace}
Let $X$ be a compact, connected oriented pseudomanifold of dimension $n=2s+1$ with only isolated singularities $\Sigma = \lbrace \sigma_{1}, \dots, \sigma_{\nu} \rbrace$ of links $L_{i}$ simply connected. $X$ is a Witt space if $H^{s}(L_{i})=0$ for all $\sigma_{i} \in \Sigma$.
\end{defi}

\begin{exmp}
\begin{enumerate}
\item The suspension of the complex projective space $S \mathbf{C}P^{3}$ is a Witt space since $H^{3}(\mathbf{C}P^{3}) = 0$.
\item The suspension of the complex projective plane $S \mathbf{C}P^{2}$ is not a Witt-space since $H^{2}(\mathbf{C}P^{2}) = \mathbf{Q}$.
\end{enumerate}
\end{exmp}

\subsection{Lagrangian truncation and Lagrangian intersection spaces}
\label{subsec:Lag_truncation}

First, we recall some facts about quadratic spaces that we will need later.

\begin{defi}
A regular quadratic space $(E,b)$ is a vector space of finite dimension $E$ together with a non degenerate bilinear form 
\[
b : E \times E \rightarrow \mathbf{k},
\]
$b$ being either be a symmetric form or an skew-symmetric one.
\end{defi}

\begin{defi}
An isotropic subspace $V$ of $(E,b)$ is a subspace of $E$ such that for all $x \in V$, $q(x)=b(x,x)=0$. If $2 \dim V = \dim E$, $V$ is then called a Lagrangian subspace.
\end{defi}

\begin{thm}[\cite{SeguinsPazzis2010}, Hyperbolic completion]\label{thm:hyperb_completion}
Let $(E,b)$ be a regular quadratic space of dimension $2m$ and suppose that $E$ posses a Lagrangian subspace subspace $V$ of dimension $m$. Then there exist a basis
\[
(a_{1}, \dots, a_{m},a_{1}^{\ast}, \dots, a_{m}^{\ast})
\]
of $E$ such that
\[
\begin{cases}
b(a_{i},a_{j})          & = 0, \\
b(a_{i}^{\ast},a_{j}^{\ast}) & =0, \\
b(a_{i},a_{j}^{\ast})   & = \delta_{ij}.
\end{cases}
\]

In particular $(a_{1}, \dots, a_{m},a_{1}^{\ast}, \dots, a_{m}^{\ast})$ is a basis in the usual sense of $E$. We also call this an hyperbolic basis.
\end{thm}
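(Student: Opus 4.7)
The plan is to build the desired basis in two stages: first a \emph{hyperbolic complement} of $V$ is produced from non-degeneracy, and then it is adjusted by elements of $V$ to kill the ``diagonal'' pairings.

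First I would fix any basis $(a_{1}, \dots, a_{m})$ of the Lagrangian $V$. The hypotheses that $b$ is non-degenerate and that $V$ is isotropic of dimension $m = \tfrac{1}{2}\dim E$ imply $V = V^{\perp}$. Indeed, isotropy gives $V \subseteq V^{\perp}$, and non-degeneracy forces $\dim V^{\perp} = 2m - \dim V = m$. Hence the linear map $\phi \col E \to V^{*}$ defined by $\phi(x)(v) = b(v,x)$ has kernel exactly $V$ and therefore induces an isomorphism $E/V \xrightarrow{\cong} V^{*}$. Lifting the dual basis of $(a_{1}, \dots, a_{m})$ yields vectors $\widehat{a}_{1}^{*}, \dots, \widehat{a}_{m}^{*} \in E$ with
\[
b(a_{i},\widehat{a}_{j}^{*}) = \delta_{ij},
\]
and $(a_{1}, \dots, a_{m}, \widehat{a}_{1}^{*}, \dots, \widehat{a}_{m}^{*})$ is already a basis of $E$ since the $\widehat{a}_{j}^{*}$ map to a basis of the quotient $E/V$.

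Next I would correct these vectors by elements of $V$. Set $a_{j}^{*} := \widehat{a}_{j}^{*} + \sum_{k} c_{jk}\, a_{k}$ for scalars $c_{jk}$ to be chosen. The identities $b(a_{i},a_{j}^{*}) = \delta_{ij}$ are automatically preserved since $V$ is isotropic. Writing $B := \bigl(b(\widehat{a}_{i}^{*}, \widehat{a}_{j}^{*})\bigr)_{ij}$ and using $b(a_{k},\widehat{a}_{j}^{*}) = \delta_{kj}$ together with $b(\widehat{a}_{i}^{*}, a_{l}) = \varepsilon\, \delta_{il}$ (where $\varepsilon = +1$ in the symmetric case and $\varepsilon = -1$ in the skew case), a direct computation gives
\[
b(a_{i}^{*},a_{j}^{*}) = B_{ij} + c_{ij} + \varepsilon\, c_{ji}.
\]
In the symmetric case $B$ is symmetric and I set $C := -\tfrac{1}{2} B$, which is legitimate because $\operatorname{char}\mathbf{k} \neq 2$ (which must be tacitly assumed); in the skew case $B$ is skew and I take the strictly upper-triangular matrix $C$ defined by $c_{ij} = -B_{ij}$ for $i < j$ and $c_{ij}=0$ otherwise, in which case $c_{ij}-c_{ji} = -B_{ij}$ for all $i,j$ by skew-symmetry of $B$.

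With this choice, the relations $b(a_{i}^{*},a_{j}^{*}) = 0$ hold, and $(a_{1}, \dots, a_{m}, a_{1}^{*}, \dots, a_{m}^{*})$ remains a basis of $E$ because it is obtained from the previous basis by a unimodular block-triangular change of coordinates (the identity on the $a_{i}$ block). This is the claimed hyperbolic basis. There is no real obstacle: the argument is pure linear algebra, the only subtlety being that in the skew-symmetric case one must solve $C - C^{T} = -B$ rather than $C + C^{T} = -B$, so the symmetric recipe of halving $B$ does not apply and the triangular choice is used instead.
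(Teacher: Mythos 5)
The paper does not prove this statement at all: it is imported verbatim from the cited reference \cite{SeguinsPazzis2010}, so there is no in-paper argument to compare against. Your proof is correct and is the standard one: using non-degeneracy and $\dim V = \tfrac{1}{2}\dim E$ to get $V = V^{\perp}$ and hence $E/V \cong V^{*}$, lifting the dual basis, and then correcting the lift by elements of $V$ so that the Gram matrix of the complement vanishes. The computation $b(a_i^*,a_j^*) = B_{ij} + c_{ij} + \varepsilon c_{ji}$ is right, both choices of $C$ do what you claim, and the block-unitriangular change of basis argument is fine. Two small remarks. First, your closing comment that ``the symmetric recipe of halving $B$ does not apply'' in the skew case is not actually true: with $C = -\tfrac{1}{2}B$ and $B$ skew one gets $C - C^{T} = -\tfrac{1}{2}B + \tfrac{1}{2}B^{T} = -B$, so the same formula works uniformly in characteristic $\neq 2$; your triangular choice is a valid alternative, just not a necessary one. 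Second, you silently use that $b$ vanishes on all of $V \times V$, whereas the paper's definition of an isotropic subspace only requires $q(x) = b(x,x) = 0$ for $x \in V$; over $\mathbf{Q}$ this is equivalent in the symmetric case by polarization, and in the skew case total isotropy is what is actually meant (and what the paper uses), so it is worth one sentence to note that you are taking $b|_{V\times V} = 0$ as the hypothesis.
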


The spaces generated respectively by $(a_{1}, \dots, a_{m})$ and $(a_{1}^{\ast}, \dots, a_{m}^{\ast})$ are then Lagrangian subspaces.

Consider $K$ as a simply connected $n$-dimensional CW-complex satisfying Poinca\-ré duality with $n=2s$. We denote by $b$ the non degenerate bilinear form induced by the Poincaré duality with $\mathbf{Q}$ coefficients, consider $\dim H^{s}(K) = 2m$ and
\[
b : H^{s}(K) \times  H^{s}(K) \longrightarrow \mathbf{Q}
\]
where $b(x,y) := \langle x \cup y, [K] \rangle$ with $[K] \in H_{2s}(K)$ the fundamental class and $\langle -,-\rangle$ the evaluation form. 

If $b$ is symmetric suppose that $H^{s}(K)$ posses a Lagrangian subspace $V$ of dimension $m$, let then $(a_{1}, \dots , a_{m})$ be a basis of $V$ and thanks to the theorem \ref{thm:hyperb_completion}, complete $(a_{1}, \dots , a_{m})$ into a hyperbolic basis $(a_{,}, \dots, a_{m}, a_{1}^{\ast}, \dots , a_{m}^{\ast})$ of $H^{s}(K)$.

If $b$ is skew-symmetric then there always exists a Lagrangian subspace and thanks to the theorem \ref{thm:hyperb_completion}, there exist a hyperbolic basis $(a_{,}, \dots, a_{m}, a_{1}^{\ast}, \dots , a_{m}^{\ast})$ of $H^{s}(K)$.

Either way, denote by $V$ and $V^{\ast}$ the subspaces respectively generated by 
\[
V := \langle a_{1}, \dots, a_{m} \rangle
\]
and
\[
V^{\ast} := \langle a_{1}^{\ast}, \dots , a_{m}^{\ast} \rangle.
\]
We have $H^{s}(K) = V \oplus V^{\ast}$.

Remark that since $b(a_{i},a_{i}^{\ast})=1$, $a_{i}$ and $a_{i}^{\ast}$ are Poincaré duals to each other. Denote by $\overline{V}$ and $\overline{V^{\ast}}$ the Poincaré duals in $H_{s}(K)$ of respectively $V$ and $V^{\ast}$ and by $(\overline{a_{1}}, \dots , \overline{a_{m}})$ the basis of $\overline{V}$ and by $(\overline{a_{1}^{\ast}}, \dots , \overline{a_{m}^{\ast}})$ the basis of $\overline{V^{\ast}}$. We have the direct sum
\[
H_{s}(K) = \overline{V} \oplus \overline{V^{\ast}}.
\]

Applying theorem \ref{thm:Q_cell_model} to $K$ we have a rational cellular homotopy equivalence $\varphi \col \widetilde{K} \longrightarrow K$ where $(C_{\ast}(\widetilde{K}), 0)$ is the integral cellular chain complex. We perform and define the Lagrangian truncation on $\widetilde{K}$.

Since the differential of $C_{\ast}(\widetilde{K})$ is zero we have
\[
C_{s}(\widetilde{K}) \otimes \mathbf{Q} = H_{s}(\widetilde{K}) \cong H_{n}(K) = \overline{V} \oplus \overline{V^{\ast}}
\]  
By simply connectivity and the Hurewicz theorem we have the isomorphism
\[
\mathrm{Hur}_{s} : \overline{V} \oplus \overline{V^{\ast}} \overset{\cong}{\longrightarrow} \pi_{s}(\widetilde{K}^{s}, \widetilde{K}^{s-1}).
\]

Let 
\[
\lambda_{i}, \lambda_{i}^{\ast} \col S^{s-1} \rightarrow \widetilde{K}^{s-1}
\]
be choices of representatives for the homotopy classes of $d \overline{a_{i}}$, $d \overline{a_{i}^{\ast}}$. Then for $s$-cells $\lbrace t_{i} \rbrace, \lbrace t_{i}^{\ast} \rbrace$ and using $\lambda_{i}, \lambda_{i}^{\ast}$ as attaching maps we define
\[
\widetilde{K}/\mathcal{L} := \widetilde{K}^{s-1} \cup \bigcup_{\lambda_{i}} t_{i} \cup \bigcup_{\lambda_{i}^{\ast}} t_{i}^{\ast}.
\]
We get the cellular homotopy equivalence $h \col \widetilde{K}/\mathcal{L} \rightarrow \widetilde{K}^{s}$ rel $\widetilde{K}^{s-1}$ the same way as the classical spatial homology truncation in \cite[proposition 1.6]{Banagl2010}. 

\begin{defi}
\label{def:Lagtr}
The Lagrangian truncation of the CW-complex $K$ is defined by 
\[
\trL{K} := \widetilde{K}^{s-1} \cup \bigcup_{\lambda_{i}} t_{i}^{\ast}.
\]
\end{defi}

Moreover, we have $h_{s} \col  H_{s}(\widetilde{K}/\mathcal{L}) \cong H_{s}(\widetilde{K}) \cong H_{s}(\widetilde{K}^{s}, \widetilde{K}^{s-1})$. If $(C_{\ast}(\widetilde{K}/\mathcal{L}), \partial)$ denotes the integral cellular chain complex of $\widetilde{K}/\mathcal{L}$ this implies that 
\[
(C_{\leq s}(\widetilde{K}/\mathcal{L}), \partial)\otimes \mathbf{Q} \longrightarrow (C_{s}(\widetilde{K}), 0)\otimes \mathbf{Q}
\]
yields an isomorphism of homology in degree $s$. This implies that 
\[
\partial \col C_{s}(\widetilde{K}/\mathcal{L}) \rightarrow C_{s-1}(\widetilde{K}/\mathcal{L})
\]
is zero.

The comparison map $\trL{K} \rightarrow K$ is defined as the composition of the maps
\[
\trL{K} \hookrightarrow \widetilde{K}/\mathcal{L} \overset{h}{\rightarrow} \widetilde{K}^{s} \hookrightarrow \widetilde{K} \overset{\varphi}{\rightarrow} K 
\]
where the arrows $\hookrightarrow$ denote cellular inclusions, $h$ is a cellular homotopy equivalence rel $\widetilde{K}^{s-1}$ and $\varphi$ a cellular rational homotopy equivalence. We have 

\begin{equation}\label{eqn:lag_troncation}
H_{r}(\trL{K}) \cong
\begin{cases}
H_{r}(K)      & r \leq s-1\\
\overline{V^{\ast}}      & r = s \\
0                        & r > s.
\end{cases}
\end{equation}

We now define, with the help of Lagrangian truncation, the space called the Lagrangian intersection space associated to $X$

Let $X$ a compact, connected oriented normal pseudomanifold of dimension $n$ with only isolated singularities $\Sigma = \lbrace \sigma_{1}, \dots, \sigma_{\nu} \rbrace$ of links $L_{i}$ simply connected. This time we fix the dimension of $X$ to be odd, $n=2s+1$.

Denote by
\[
b_{i} \col H^{s}(L_{i}) \times H^{s}(L_{i}) \longrightarrow \mathbf{Q}
\]
the non degenerate bilinear form induced by Poincaré duality on the links $L_{i}$. Suppose that for all $i$, $H^{s}(L_{i})$ admit a Lagrangian subspace $V_{i}$ with respect to the bilinear form $b_{i}$. To each $L_{i}$ we apply the Lagrangian truncation process to get maps
\[
f_{i} \col \trL{L_{i}} \longrightarrow L_{i}.
\]
Denote by $\cotrL{L_{i}}$ the homotopy cofiber of the map $f_{i}$ and call it the Lagrangian cotruncation of $L_{i}$, we then have a map
\[
f^{i} \col L_{i} \longrightarrow \cotrL{L_{i}}.
\]
And
\[
H_{r}(\cotrL{L_{i}}) \cong
\begin{cases}
\mathbf{Q}               & r = 0,\\
0                        & 1 \leq r < s,\\
\overline{V_{i}}         & r = s, \\
H_{r}(K)                 & s+1 \leq r \leq 2s.\\
\end{cases}
\]

\begin{defi}
\label{def:ILX_hpo}
The Lagrangian intersection space $\nI{\mathcal{L}}{X}$ of the space $X$ is the homotopy pushout of the solid arrows diagram.
\[
\begin{tikzpicture}
\matrix (m)[matrix of math nodes, row sep=2em, column sep=5em, text height=1.5ex, text depth=0.25ex]
{ L(\Sigma,X)                                 & X_{reg}       \\
  \bigsqcup_{i} \cotrL{L_{i}}                 & \nI{\mathcal{L}}{X} \\};

\path[->]
(m-1-1) edge node[] {} (m-1-2);
\path[dashed,->]
(m-2-1) edge node[] {} (m-2-2);

\path[->]
(m-1-1) edge node[] {} (m-2-1);
\path[dashed,->]
(m-1-2) edge node[] {} (m-2-2);
\end{tikzpicture}
\]
\end{defi}

We want to know when we can perform Lagrangian truncation to get Lagrangian intersection spaces. Lets us first define the class of spaces for which it is possible.

\begin{defi}\label{def:Lspace}
Let $X$ be a compact, connected oriented normal pseudomanifold of dimension $2s+1$ with only isolated singularities $\Sigma = \lbrace \sigma_{1}, \dots, \sigma_{\nu} \rbrace$ of links $L_{i}$ simply connected. $X$ is an L-space if $H^{s}(L_{i})$ has a Lagrangian subspace with respect to the non degenerate bilinear form $b_{i}\col H^{s}(L_{i}) \times H^{s}(L_{i}) \rightarrow \mathbf{Q}$ for all $\sigma_{i} \in \Sigma$.
\end{defi}

\begin{exmp}
\begin{enumerate}
\item The suspension of the torus $S T^{2}$ is an L-space since 
\[
H^{1}(L) = H^{1}(T^{2}) = \mathbf{Q} \oplus \mathbf{Q}.
\]
\item The suspension of the complex projective plane $S \mathbf{C}P^{2}$ is not an L-space since $H^{2}(\mathbf{C}P^{2}) = \mathbf{Q}$.
\end{enumerate}
\end{exmp}

We have the following criterion saying when a space $X$ of odd dimension is an L-space and thus when we can perform a Lagrangian truncation. Let $L_{i}$ be a link of a singularity of $X$, $L_{i}$ is a connected compact manifold of even dimension. Consider its non-degenerate bilinear form induced by Poincaré duality and denote it by $b_{i}$ and let $\sigma(b_{i})$ be its reduced signature, $\sigma(b_{i})$ is related to the Pontryagin numbers by the Hirzebruch signature formula. 

\begin{propo}
Let $X$ a compact, connected oriented normal pseudomanifold of dimension $n$ with only isolated singularities $\Sigma = \lbrace \sigma_{1}, \dots, \sigma_{\nu} \rbrace$ of links $L_{i}$ simply connected. Then
\begin{enumerate}
\item If $n = 4s+3$, $X$ is an L-space.
\item If $n = 4s+1$, $X$ is an L-space if and only if $\sigma(b_{i})=0$ for all $i$.
\end{enumerate} 
\end{propo}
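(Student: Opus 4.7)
My plan is to reduce everything to a statement about the middle cup-product pairing on the cohomology of the links, which are compact oriented manifolds of dimension $n-1$, and then to exploit the parity of the middle degree.

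First I would recall that for each singularity $\sigma_i$, the link $L_i$ is a closed oriented manifold of dimension $n-1$, and Poincaré duality endows the middle cohomology $H^{\lfloor (n-1)/2\rfloor}(L_i;\mathbf{Q})$ with a non-degenerate bilinear form $b_i$ given by $b_i(x,y)=\langle x\cup y,[L_i]\rangle$. The key observation is that for classes $x,y$ of degree $d$, graded commutativity gives $x\cup y=(-1)^{d^2}y\cup x=(-1)^{d}y\cup x$, so $b_i$ is symmetric when $d$ is even and skew-symmetric when $d$ is odd.

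For case (1), $n=4s+3$ and thus $\dim L_i=4s+2$ with middle degree $2s+1$, which is odd. Consequently $b_i$ is a non-degenerate skew-symmetric form on a finite-dimensional $\mathbf{Q}$-vector space, which by the standard symplectic classification (or by a direct application of the hyperbolic completion Theorem \ref{thm:hyperb_completion} after exhibiting any single isotropic vector and inducting) always admits a Lagrangian subspace. Hence every such $X$ is automatically an L-space, with no further hypotheses.

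For case (2), $n=4s+1$, $\dim L_i=4s$, and the middle degree is the even integer $2s$, so $b_i$ is symmetric. The forward implication is easy: if $V_i\subset H^{2s}(L_i;\mathbf{Q})$ is Lagrangian, then by the hyperbolic completion theorem $b_i$ splits as a sum of hyperbolic planes, and each hyperbolic plane has signature zero, whence $\sigma(b_i)=0$. For the converse, one invokes the classification of non-degenerate symmetric bilinear forms showing that such a form is metabolic (i.e.\ admits a Lagrangian) precisely when it is hyperbolic, which on the signature side is detected by $\sigma(b_i)=0$; the bridge to Pontryagin numbers is then just Hirzebruch's signature formula applied to $L_i$. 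The subtle point I expect to be the main obstacle is the converse direction: the signature alone is a complete invariant for hyperbolicity over $\mathbf{R}$, but over $\mathbf{Q}$ the Witt group has additional Hasse–Minkowski obstructions, so the argument implicitly uses that $\sigma(b_i)$ is the reduced signature governing the existence of a Lagrangian in the relevant setting; I would follow the paper's convention and cite the classification of symmetric forms to close the argument.
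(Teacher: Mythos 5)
Your part (1) and the forward implication of part (2) are correct and follow the same route as the paper: graded commutativity makes the middle cup-product pairing on $H^{2s+1}(L_i;\mathbf{Q})$ skew-symmetric when $n=4s+3$, and a non-degenerate symplectic form over $\mathbf{Q}$ always admits a Lagrangian; conversely, when $n=4s+1$ a Lagrangian makes the symmetric form $b_i$ hyperbolic, hence of zero signature.

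The converse in part (2) is where your argument has a genuine gap, one you half-diagnose but do not fill. Over $\mathbf{Q}$, a non-degenerate symmetric bilinear form of signature zero need \emph{not} admit a Lagrangian: the rank-two form $\langle 1,-2\rangle$ has signature $0$ but is anisotropic (a nontrivial rational solution of $x^2=2y^2$ would make $\sqrt{2}$ rational), so it is not metabolic. Hence ``signature zero detects hyperbolicity'' is false for abstract forms over $\mathbf{Q}$, and appealing to ``the classification of symmetric forms'' cannot close the argument --- the Hasse--Minkowski classification is precisely what shows the obstruction is real. The missing input, and the one the paper actually uses, is that $b_i$ is not an arbitrary form but the intersection form of a closed simply connected smooth $4s$-manifold $L_i$: by the Barge--Sullivan realization theorem (\cite{Barge1976}, \cite{Sullivan1977}), since $L_i$ realizes its own Sullivan model together with its Pontryagin numbers, $b_i$ must be equivalent over $\mathbf{Q}$ to a diagonal form $\sum_{k=1}^{a}x_k^2-\sum_{k'=1}^{b}x_{k'}^2$; the hypothesis $\sigma(b_i)=0$ then forces $a=b$, and since $x^2-y^2$ is a hyperbolic plane the form is hyperbolic and possesses a Lagrangian. (One could alternatively extract the same conclusion from the unimodularity of the integral intersection form.) Without some such geometric input beyond non-degeneracy and vanishing signature, the converse does not follow.
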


\begin{proof}
Suppose first that $\dim L_{i} \equiv 2 \mod 4 $, that is $\dim X = n =4s+3$. In that case the non-degenerate bilinear form induced by Poincaré duality
\[
b_{i} \col H^{2s+1}(L_{i}) \times H^{2s+1}(L_{i}) \longrightarrow \mathbf{Q}
\]
is skew-symmetric due to the graded commutativity. The form $b_{i}$ is then a non-degenerate symplectic form for all $\sigma_{i} \in \Sigma$, we always have a Lagrangian subspace in that case.

Now, if $\dim L_{i} \equiv 0 \mod 4 $, that is $\dim X = 4s+1$. We can't always apply the Lagrangian truncation. The existence of a Lagrangian subspace for $H^{2s}(L_{i})$ is given by the theorem of Sullivan and Barge, see \cite{Barge1976} and \cite{Sullivan1977}, about rational classification of simply connected manifolds. 

If $X$ is an L-space it is clear by definition \ref{def:Lspace} that $\sigma(b_{i})=0$ for all $i$.

On the other hand, let $(\wedge V_{i}, d)$ be a Sullivan model of $L_{i}$ and $p= \lbrace p_{j} \rbrace$ the Pontryagin numbers of $L_{i}$ related to $\sigma(b_{i}) = 0$. Obviously $L_{i}$ realizes the pair $((\wedge V_{i}, d),p)$, then by the Sullivan-Barge theorem this implies the quadratic form on $H^{2s}(L_{i})$ is equivalent over $\mathbf{Q}$ to a quadratic form 
\[
\sum_{k=1}^{m} x_{k}^{2} - \sum_{k'=1}^{m} x_{k'}^{2}.
\]
This quadratic form is then hyperbolic and posses a Lagrangian subspace.
\end{proof}

\section{The construction of Poincaré duality spaces}
\label{sec:Const_DP_space}
\subsection{Poincaré duality}

We recall here the definitions and results about Poincaré duality needed in the rest of the paper.

\begin{defi}
Let $(X,Y)$ be a pair of CW-complex, we say that $(X,Y)$ is a rational Poincaré duality pair of dimension $n$ if :
\begin{enumerate}
\item $\dim_{\mathbf{Q}} H_{r}(X;\mathbf{Q})$ is finite for all $r$,
\item $Y$ is a sub-CW-complex of $X$ with the same property,
\item there exists a class $[x] \in H_{n}(X,Y; \mathbf{Q})$ such that
\[
- \cap [x] : H^{r}(X;\mathbf{Q}) \longrightarrow H_{n - r}(X,Y;\mathbf{Q})
\]
\noindent is an isomorphism. We call $[x]$ an orientation class of $(X,Y)$
\end{enumerate}
\end{defi}

\begin{remark}
Let $(X,Y)$ be a Poincaré duality pair of dimension $n$, then $Y = (Y,\emptyset)$ is a Poincaré duality pair of dimension $n-1$. Indeed, if $[x] \in H_{n}(X,Y)$ is an orientation class, then $[y] = \partial [x] \in H_{n-1}(Y)$ is an orientation class of $Y$. We say that $Y$ is a Poincaré complex of dimension $n-1$. We also say that $(Y,[y])$ is the oriented boundary of $(X,Y,[x])$.
\end{remark}

Let $(X_{1}, Y, [x_{1}])$ and $(X_{2},Y,[x_{2}])$ be two oriented Poincaré duality pairs of dimension $n$ with the same oriented boundary $(Y, [y])$. Let $\hat{X} := X_{1} \cup_{Y} X_{2}$ the CW-complex obtained by glueing $X_{1}$ and $X_{2}$ along their common boundary $Y$.

\begin{thm}[\cite{Browder1972}, Glueing of 2 oriented Poincaré duality pairs]\label{thm:glueing}
Consider the following diagram 
\[
\begin{tikzpicture}
\matrix (m)[matrix of math nodes, row sep=1.5em, column sep=2.5em, text height=1.5ex, text depth=0.25ex]
{                                    & 0                       \\
				   	                 & H_{n}(\hat{X})          \\
H_{n}(X_{1},Y)\oplus H_{n}(X_{2},Y)  & H_{n}(\hat{X},Y)        \\
						             & H_{n-1}(Y) \\};

\path[->]
(m-1-2) edge node[style] {} (m-2-2);
\path[->]
(m-2-2) edge node[auto] {$i$} (m-3-2);
\path[->]
(m-3-2) edge node[auto] {$\partial$} (m-4-2);

\path[->]
(m-3-1) edge node[auto] {$i_{1} \oplus i_{2}$} (m-3-2);
\end{tikzpicture}
\]
and let $[\hat{x}] = i^{-1}(i_{1} \oplus i_{2}([x_{1}],[-x_{2}])) \in H_{n}(\hat{X})$. Then any two of the following conditions imply the third.
\begin{enumerate}
\item $(\hat{X}, [\hat{x}])$ is an oriented Poincaré complex of dimension $n$ without boundary,
\item $(Y, [y])$ is a Poincaré complex of dimension $n-1$ with orientation class $[y] = \partial_{1} [x_{1}] = \partial_{2} [x_{2}]=\partial_{0}[\hat{x}] \in H_{n-1}(Y)$,
\item $(X_{i}, Y, [x_{i}])$ are Poincaré duality pairs of dimension $n$ with orientation classes $[x_{i}] \in H_{n}(X_{i}, Y)$.
\end{enumerate}
\end{thm}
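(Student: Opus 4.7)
The plan is to set up a ladder of long exact sequences in which the vertical arrows are cap products with the various candidate fundamental classes $[\hat{x}]$, $[x_{1}] \oplus [x_{2}]$ and $[y]$, and then conclude by the five lemma. The assertion that any two of (1), (2), (3) imply the third will then reduce to the standard observation that in a ladder with exact rows, if any two consecutive triples of vertical arrows are isomorphisms, so is the remaining one.

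The first ingredient is the Mayer--Vietoris sequence in cohomology for the decomposition $\hat{X} = X_{1} \cup_{Y} X_{2}$,
\[
\cdots \to H^{r-1}(Y) \to H^{r}(\hat{X}) \to H^{r}(X_{1}) \oplus H^{r}(X_{2}) \to H^{r}(Y) \to \cdots ,
\]
and the second ingredient is the long exact sequence of the pair $(\hat{X}, Y)$ combined with the excision isomorphism $H_{\ast}(\hat{X}, Y) \cong H_{\ast}(X_{1}, Y) \oplus H_{\ast}(X_{2}, Y)$, which yields
\[
\cdots \to H_{n-r+1}(Y) \to H_{n-r}(X_{1},Y) \oplus H_{n-r}(X_{2},Y) \to H_{n-r}(\hat{X}) \to H_{n-r-1}(Y) \to \cdots .
\]
I would then arrange these two sequences as the two rows of a ladder, connecting them vertically by the cap products $- \cap [\hat{x}]$, $(-\cap [x_{1}]) \oplus (-\cap [x_{2}])$, and $-\cap [y]$, shifted so that the indices of the pair sequence sit underneath cohomological degree $r$.

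The key technical step is then to check that this ladder actually commutes. Commutativity of the squares not involving a connecting homomorphism is just naturality of the cap product with respect to the inclusions $Y \hookrightarrow X_{i} \hookrightarrow \hat{X}$; the delicate squares are the ones mixing the Mayer--Vietoris coboundary with the connecting homomorphism of the pair, and these commute up to the usual sign because of the formula $\partial(\alpha \cap c) = (\delta \alpha) \cap c \pm \alpha \cap \partial c$ combined with the defining relation $\partial_{1}[x_{1}] = \partial_{2}[x_{2}] = [y]$ and the construction $[\hat{x}] = i^{-1}(i_{1}[x_{1}] - i_{2}[x_{2}])$, which is precisely what makes $\partial_{0}[\hat{x}] = [y]$.

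Once the ladder is in place, the theorem is a direct application of the five lemma: assuming two of the three collections of vertical arrows are isomorphisms in every degree, the remaining one is an isomorphism in every degree, and this is the content of statements (1), (2), (3). The main obstacle is therefore not conceptual but bookkeeping, namely pinning down sign conventions in the cap-product formulas and confirming that the choice of $[\hat{x}]$ in the statement makes the connecting-homomorphism squares commute on the nose; once that is verified the three implications are symmetric and fall out simultaneously from the five lemma.
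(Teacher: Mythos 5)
The paper offers no proof of this statement: it is quoted verbatim from Browder's book, so there is nothing internal to compare against. Your argument is the standard proof (and essentially the one in the cited source): the ladder pairing the cohomology Mayer--Vietoris sequence of $\hat{X}=X_{1}\cup_{Y}X_{2}$ with the homology sequence of the pair $(\hat{X},Y)$ via excision, vertical cap products, commutativity up to sign via $\partial(\alpha\cap c)=(\delta\alpha)\cap c\pm\alpha\cap\partial c$, and the five lemma applied three ways using the periodic alternation of the three families of vertical maps. This is correct, and you rightly identify the one point that needs genuine care, namely that the definition $[\hat{x}]=i^{-1}(i_{1}\oplus i_{2}([x_{1}],[-x_{2}]))$ is exactly what makes the square relating $-\cap[\hat{x}]$ to $(-\cap[x_{1}])\oplus(-\cap[x_{2}])$ commute and forces $\partial_{0}[\hat{x}]=[y]$.
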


Let $(X,Y,[x])$ be an oriented Poincaré duality pair of dimension $n=4s$, the following diagram
\[
\begin{tikzpicture}
\matrix (m)[matrix of math nodes, row sep=2.5em, column sep=3em, text height=1.5ex, text depth=0.25ex]
{            &     H^{2s}(X,Y)          & H^{2s}(X)         \\
H_{2s}(Y)    &     H_{2s}(X)            & \hom(H_{2s}(X), \mathbf{Q})\\};

\path[->]
(m-1-2) edge node[auto,swap] {$- \cap [x]$} node[auto]{$\cong$} (m-2-2);
\path[->]
(m-1-2) edge node[style] {} (m-1-3);
\path[->]
(m-2-1) edge node[auto] {$i_{\ast}$} (m-2-2);
\path[->]
(m-2-2) edge node[style] {} (m-2-3);
\path[->]
(m-1-3) edge node[auto] {$\cong$} (m-2-3);
\end{tikzpicture}
\]
gives the vector space $H_{2s}(X)$ a symmetric bilinear form of kernel $i_{\ast}(H_{2n}(Y))$ denoted by $b_{X}$. 

\begin{lem}[Novikov]\label{lem:novikov}
Let $(X_{1}, Y, [x_{1}])$ and $(X_{2},Y,[x_{2}])$ 2 oriented Poincaré duality pairs of dimension $n=4s$ with the same oriented boundary $(Y, [y])$. If $(\hat{X}, [\hat{x}])$ is the space obtained by glueing as in theorem \ref{thm:glueing}, then
\[
[b_{\hat{X}}] = [b_{X_{1}}] - [b_{X_{2}}] \text{ in } W(\mathbf{Q}).
\]
\end{lem}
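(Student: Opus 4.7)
The plan is to derive the identity from a sublagrangian reduction in the Witt ring $W(\mathbf{Q})$. I would first equip $V := H_{2s}(X_{1}) \oplus H_{2s}(X_{2})$ with the bilinear form $b_{V} := b_{X_{1}} \oplus (-b_{X_{2}})$; since the radical of each $b_{X_{i}}$ coincides with $R_{i} := \mathrm{Im}(i_{i\ast} \col H_{2s}(Y) \rightarrow H_{2s}(X_{i}))$ by Poincar\'e--Lefschetz duality applied to the pair $(X_{i},Y)$, the radical of $b_{V}$ splits as $R_{1} \oplus R_{2}$ and $[b_{V}] = [b_{X_{1}}] - [b_{X_{2}}]$ in $W(\mathbf{Q})$.

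Next I would exploit the Mayer--Vietoris sequence
\[
H_{2s}(Y) \overset{\alpha}{\longrightarrow} V \overset{\beta}{\longrightarrow} H_{2s}(\hat{X}) \overset{\partial}{\longrightarrow} H_{2s-1}(Y),
\]
with $\alpha(y) = (i_{1\ast}y, i_{2\ast}y)$ and $\beta(a,b) = j_{1\ast}a - j_{2\ast}b$. The relations $[\hat{x}]|_{X_{1}} = [x_{1}]$ and $[\hat{x}]|_{X_{2}} = -[x_{2}]$ coming from Theorem \ref{thm:glueing} force $\beta$ to be a morphism of bilinear spaces, namely $b_{\hat{X}}(\beta(v),\beta(v')) = b_{V}(v,v')$, the diagonal terms producing $\pm b_{X_{i}}$ and the cross-terms $b_{\hat{X}}(j_{1\ast}a, j_{2\ast}b)$ vanishing because representing cycles can be pushed disjointly into $X_{1}$ and $X_{2}$ through a collar of $Y$ in $\hat{X}$.

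Set $I := j_{1\ast}i_{1\ast} H_{2s}(Y) = j_{2\ast}i_{2\ast} H_{2s}(Y) \subseteq H_{2s}(\hat{X})$ and $K := \mathrm{Im}(\beta) = \ker\partial$. The same collar argument shows $I$ is isotropic for the non-degenerate form $b_{\hat{X}}$, while combining the excision isomorphism $H^{2s}(\hat{X},Y) \cong H^{2s}(X_{1},Y) \oplus H^{2s}(X_{2},Y)$ with Poincar\'e--Lefschetz duality identifies $I^{\perp}$ with $K$. Applying the sublagrangian reduction in $W(\mathbf{Q})$ to the isotropic subspace $I$ then yields
\[
[b_{\hat{X}}] = \bigl[b_{\hat{X}}|_{K/I}\bigr] \text{ in } W(\mathbf{Q}).
\]

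To conclude, a small diagram chase in the Mayer--Vietoris sequence shows that $\beta$ induces a surjection $V \twoheadrightarrow K$ with kernel $\mathrm{Im}(\alpha)$, and that the preimage of $I$ in $V$ is exactly $R_{1} \oplus R_{2} = \mathrm{rad}(b_{V})$. Hence $\beta$ descends to an isometry $V/\mathrm{rad}(b_{V}) \xrightarrow{\cong} K/I$, giving
\[
[b_{\hat{X}}] = \bigl[b_{\hat{X}}|_{K/I}\bigr] = [b_{V}] = [b_{X_{1}}] - [b_{X_{2}}] \text{ in } W(\mathbf{Q}).
\]
The main obstacle I anticipate is the identification $I^{\perp} = K$: it requires matching the excision description of $H^{2s}(\hat{X},Y)$ with Poincar\'e--Lefschetz duality on each pair $(X_{i},Y)$, while carefully tracking the sign coming from $[\hat{x}]|_{X_{2}} = -[x_{2}]$. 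The compatibility of $\beta$ with the forms and the sublagrangian reduction step are then essentially formal.
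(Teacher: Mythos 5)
Your argument is correct in substance, and it is the standard proof of Novikov additivity in its Witt-group form: pass to $V=H_{2s}(X_{1})\oplus H_{2s}(X_{2})$ with $b_{X_{1}}\oplus(-b_{X_{2}})$, show the Mayer--Vietoris map $\beta$ is form-preserving, and perform a sublagrangian reduction along $I=\beta(R_{1}\oplus R_{2})$ using $I^{\perp}=K=\ker\partial$. The paper itself gives no proof at all --- the lemma is stated as a classical result attributed to Novikov --- so there is nothing to compare against; your write-up supplies the missing argument, and you correctly isolate $I^{\perp}=K$ as the one step requiring real work (it follows from matching the Mayer--Vietoris sequence with its Poincaré--Lefschetz dual, as you indicate). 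One caveat on phrasing: in this paper $\hat{X}$ is not a manifold but a CW-complex obtained by gluing the manifold-with-boundary $X_{reg}$ to a Poincaré pair of the form $(\cotr{\phi}{L},L)$, so the vanishing of the cross-terms $b_{\hat{X}}(j_{1\ast}a,j_{2\ast}b)$ cannot be argued by pushing representing cycles off each other through a collar. The correct general argument is cohomological: the Poincaré dual of $j_{1\ast}a$ lifts to $H^{2s}(\hat{X},X_{2})$ and that of $j_{2\ast}b$ to $H^{2s}(\hat{X},X_{1})$, so their cup product factors through $H^{4s}(\hat{X},X_{1}\cup X_{2})=0$; the same remark applies to the isotropy of $I$. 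With that substitution your proof goes through verbatim for Poincaré duality pairs, which is the generality the paper needs.
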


The rational homotopy type of rational Poincaré duality spaces does not depend of the fundamental class. As stated in the following theorem.

\begin{thm}[\cite{Stasheff1983}]\label{thm:stasheff}
Let $H$ be a Poincaré duality algebra of top dimension $n$ and $H^{1}=0$. Let $X$ be a simply connected rational space with $H(X) \cong H$ except $H^{n}(X)=0$. If $Y = X \cup e^{n}$ with $H(Y) \cong H$, then the rational homotopy type of $Y$ is determined by $X$. Moreover, the cell $e^{n}$ is attached by ordinary Whitehead products (not iterated) with respect to some basis of $\pi_{\ast}(X) \otimes \mathbf{Q}$. 
\end{thm}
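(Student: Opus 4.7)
The plan is to work with Sullivan minimal models and exploit the bijective correspondence between rational homotopy types of simply connected finite-type spaces and isomorphism classes of minimal Sullivan algebras. Let $(\wedge V,d)$ denote the minimal Sullivan model of $X$; since $X$ is simply connected with finite-dimensional total rational cohomology, $V$ is of finite type with $V^{\leq 1}=0$. The vanishing $H^n(X)=0$ forces, after a suitable change of basis, the existence of a generator $w \in V^{n-1}$ whose differential $dw$ is a cocycle playing the role of the ``missing'' fundamental class.

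I would model the cofiber $Y = X\cup_{\alpha} e^n$ as follows. Applying $A_{PL}$ to the pushout square defining $Y$ yields a pullback of CDGAs, from which one extracts a Sullivan model $M_Y$ that coincides with $(\wedge V,d)$ in degrees $\leq n-2$ and differs from it only in degrees $n-1$ and $n$, effectively by ``un-killing'' the class of $dw$. After restructuring to obtain the minimal model, one arrives at a minimal Sullivan algebra with $H^n\cong \mathbf{Q}$, consistent with $H(Y)\cong H$.

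For the uniqueness of the rational homotopy type, any attaching class $[\alpha] \in \pi_{n-1}(X)\otimes \mathbf{Q}$ producing $H(Y) \cong H$ must satisfy $\alpha^{*}=0$ on cohomology by the long exact sequence of the cofibration, and hence lies in the kernel of the rational Hurewicz homomorphism. The subset of such classes yielding a Poincaré duality space $Y$ is either empty or a single orbit under the action of the group of rational self-equivalences of $X$ on $\pi_{n-1}(X)\otimes \mathbf{Q}$; this orbit analysis, which relies on the rigidity of Poincaré duality Sullivan models, is the technical heart of the argument and the main obstacle.

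Finally, for the Whitehead product statement, the Milnor--Moore/Quillen--Sullivan duality identifies $\pi_{*}(X)\otimes\mathbf{Q}$ with the graded Lie algebra dual to $V$, under which the Whitehead bracket dualises to the quadratic part $d_2$ of the differential. Because $[\alpha]$ lies in the kernel of the rational Hurewicz map, it is detected purely by $d_2$ and not by any higher-order part $d_k$ for $k \geq 3$; therefore $\alpha$ can be represented, with respect to a suitable basis of $\pi_{*}(X)\otimes\mathbf{Q}$, as a sum of ordinary (non-iterated) Whitehead products of total degree $n-1$, as required.
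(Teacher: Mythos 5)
This statement is quoted in the paper from Stasheff's article and is not proved there, so there is no internal argument to compare against; judging your proposal on its own terms, it has two genuine gaps. First, the uniqueness assertion --- that the set of attaching classes $[\alpha]\in\pi_{n-1}(X)\otimes\mathbf{Q}$ producing a space with $H(Y)\cong H$ forms a single orbit under rational self-equivalences of $X$ --- is exactly the content of the theorem, and you state it without argument while acknowledging it is ``the technical heart \dots and the main obstacle.'' A proof that defers its central step is not a proof. The actual mechanism is that Poincaré duality of $H(Y)$ pins down the quadratic (bracket-length two) part of $\alpha$ in the homotopy Lie algebra $L_X$, because that part is dual to the cup-product pairing $H^{i}\otimes H^{n-i}\to H^{n}$, which is non-degenerate and determined by $H$; one then normalises the higher-order terms away by automorphisms of $X$. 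None of this appears in your sketch.

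Second, the Whitehead-product step rests on a false implication. Membership in the kernel of the rational Hurewicz map does \emph{not} imply that a class is ``detected purely by $d_{2}$'' or that it is a sum of ordinary Whitehead products. In terms of the minimal model $(\wedge V,d)$, the image of the Whitehead product map in $\pi_{n-1}(X)\otimes\mathbf{Q}$ is the annihilator of $\ker(d_{2})\cap V^{n-1}$, while the kernel of the Hurewicz map is the annihilator of the (generally smaller) subspace of linear parts of closed elements; the inclusion can be strict. For example, if $V$ has generators $x,y,z$ in degree $3$ and $v$ in degree $8$ with $dv=xyz$, then the class dual to $v$ lies in the kernel of the Hurewicz map in degree $8$ but is not a sum of Whitehead products, since $d_{2}v=0$. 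So your argument, which never uses the Poincaré duality of $H(Y)$ at this point, cannot yield the ``moreover'' clause: it is again the non-degeneracy of the top-degree cup product that forces $\alpha$ to be quadratically decomposable, i.e.\ a sum of non-iterated brackets with respect to a suitable basis. The first half of your sketch (modelling the cofibre, and the observation that $\alpha$ must die under Hurewicz by the long exact sequence of the pair $(Y,X)$) is fine, but it is the easy half.
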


We refer to \cite[13.e p.175]{Felix2001} for the definition of a Whitehead product.

\subsection{The unique isolated singularity case}
\label{subsec:one_isol_sing}

In this part we prove the following theorem

\begin{thm}[Unique isolated singularity case]
Let $X$ be a compact, connected oriented normal pseudomanifold of dimension $n$ with one isolated singularity of link $L$ simply connected. Then, there exists a good rational Poincaré approximation $\DP{X}$ of $X$. Moreover if $\dim X \equiv 0 \mod 4$, then the Witt class associated to the intersection form $b_{\DP{X}}$ is the same that the Witt class associated to the middle intersection cohomology of $X$.
\end{thm}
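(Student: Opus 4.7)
The plan is to build $\DP{X}$ by formally attaching a top $n$-cell to the middle-perversity intersection space, thereby equipping it with a rational fundamental class and turning it into a rational Poincaré duality space. In even dimension $n=2s$, take $\I{m}{X}$, which coincides with $\nI{m}{X}$ since there is only one singularity. Applying the Mayer-Vietoris sequence to the pushout of Definition \ref{def:intersectionspace} gives $H_n(\I{m}{X};\mathbf{Q})=0$: indeed $H_n(X_{reg})=0$ (manifold with nonempty boundary), $H_n(\cotr{s}{L})=0$ (since $\dim L = n-1$), and the connecting map $H_{n-1}(L) \to H_{n-1}(X_{reg}) \oplus H_{n-1}(\cotr{s}{L})$ is injective because $L \to \cotr{s}{L}$ induces the identity on top-degree homology while $L \to X_{reg}$ is null on $H_{n-1}$. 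Combined with Banagl's generalized Poincaré duality pairing on $\redhI{m}{*}{X}$, the cohomology of $\I{m}{X}$ thus satisfies the hypotheses of Theorem \ref{thm:stasheff}, and attaching a top $n$-cell through appropriate Whitehead products yields a rational Poincaré duality space $\DP{X}$.

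For odd dimension $n=2s+1$, I would split along the two cases appearing in the statement. When $X$ is a Witt space, $H^s(L)=0$ by Definition \ref{def:Wittspace}, so the homological truncations at $k(\overline{m})$ and $k(\overline{n})$ produce rationally equivalent intersection spaces, and the same Mayer-Vietoris plus Stasheff argument as above yields $\DP{X}$. When $X$ is an L-space, I would instead use the Lagrangian truncation $\trL{L}$ of Definition \ref{def:Lagtr} together with the Lagrangian intersection space $\nI{\mathcal{L}}{X}$ of Definition \ref{def:ILX_hpo}; the choice of a Lagrangian subspace is exactly what is needed to make the middle-degree pairing non-degenerate after truncation, and again $H_n$ vanishes by the same type of Mayer-Vietoris computation, so Theorem \ref{thm:stasheff} applies.

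To verify the factorization properties of Definition \ref{def:Q_Poinc_approx}, the map $\phi \col X_{reg} \to \DP{X}$ is taken to be the composition of the canonical inclusion $X_{reg} \hookrightarrow \I{m}{X}$ (or $\nI{\mathcal{L}}{X}$) with the cell attachment. The map $\psi \col \DP{X} \to \overline{X}$ is built from the null-homotopy $\cotr{s}{L} \to cL$ (respectively $\cotrL{L} \to cL$), which by the universal property of the pushout induces $\I{m}{X} \to \overline{X} = X_{reg} \cup_L cL$; this extends over the top cell since $\overline{X}$ carries its own rational fundamental class. By construction $\psi \circ \phi$ is rationally homotopic to the inclusion $X_{reg} \to \overline{X}$. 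The required isomorphism/injection/surjection conditions on $H_*$ then reduce to a Mayer-Vietoris comparison between the two pushouts: the vanishing of $\widetilde{H}_r(\cotr{s}{L})$ (resp.\ $\widetilde{H}_r(\cotrL{L})$) in low degrees forces $\phi_r$ to be an isomorphism in the declared upper range and an injection at the middle degree, while the contractibility of $cL$ makes $\psi_r$ an isomorphism in the declared lower range.

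For the Witt-class statement when $n=4s$, the cell attachment does not change the middle-degree cohomology, so $H^{2s}(\DP{X}) = \redHI{m}{2s}{X}$. The point is that Stasheff's attaching map encodes precisely the Poincaré duality pairing in the PD algebra being realised, so the cup-product form $b_{\DP{X}}(x,y) = \langle [\DP{X}], x \cup y \rangle$ agrees with Banagl's generalized form $b_{HI}$ on $\redhI{m}{2s}{X}$. The Witt-class equality with the middle intersection cohomology pairing $b_{IH}$ is then [Banagl2010, Thm.~2.28]. The main obstacle I expect is precisely this last identification $b_{\DP{X}} = b_{HI}$: either one argues directly that Stasheff's attaching $n$-cell realises the generalized PD pairing by construction, or, alternatively, one views $\DP{X} \simeq X_{reg} \cup_{\tr{s}{L}} c\tr{s}{L}$ as a Poincaré pair glued along $L$ and then applies Novikov's lemma \ref{lem:novikov} together with the glueing theorem \ref{thm:glueing} to track the Witt element through the decomposition.
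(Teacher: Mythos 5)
Your overall architecture (truncate the link, adjoin a top cell, relate the result to the intersection space, then check the factorization conditions by Mayer--Vietoris) matches the paper's in spirit, and your case split for odd dimension (Witt vs.\ L-space, with the Lagrangian truncation in the latter) is exactly right. But the central step --- producing the top cell attachment that actually induces Poincar\'e duality --- has a genuine gap. You invoke Theorem \ref{thm:stasheff} for \emph{existence}, but as stated it is a uniqueness and structure result: it assumes one is already given a space $Y = X \cup e^{n}$ whose cohomology is a Poincar\'e duality \emph{algebra}, and concludes that its rational homotopy type is determined and the cell is attached by Whitehead products. It does not assert that some attaching map exists making the cup-product pairing on $H^{*}(IX \cup e^{n})$ nondegenerate. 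Moreover, your justification that ``the cohomology of $\I{m}{X}$ satisfies the hypotheses'' rests on Banagl's generalized duality $\redhI{m}{r}{X} \cong \redHI{n}{n-r}{X}$, which is an abstract isomorphism of vector spaces and is explicitly \emph{not} realized by cup product with a fundamental class --- the introduction of the paper warns about precisely this point. To feed Stasheff's theorem you would first need to define and verify a nondegenerate product into the adjoined top degree compatible with the ring structure of $H^{*}(IX)$, which is essentially the whole content of the theorem you are trying to prove. (A secondary issue: Stasheff's theorem also requires simple connectivity, which the paper only gets under the extra hypothesis $H^{1}(X_{reg})=0$; the main theorem must work without it.)

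The paper's proof avoids this circularity by working at the level of Poincar\'e pairs rather than algebras. It chooses a map $\phi \col S^{n-1} \to \cotr{}{L}$ (resp.\ $\cotrL{L}$) whose Hurewicz image in $H_{n-1} \cong \mathbf{Q}$ is nonzero --- produced by the rational Hurewicz theorem in the even and Witt cases, and by an indecomposability argument in the minimal model (Lemma \ref{lem:indecomposable}) in the L-space case --- and shows via Lemma \ref{lem:condition_PDpair} that $(\cotr{\phi}{L},L)$ is then a Poincar\'e duality pair. Since $(X_{reg},L)$ is a Poincar\'e--Lefschetz pair with the same boundary, Browder's glueing theorem \ref{thm:glueing} makes $\DP{X} := X_{reg} \cup_{L} \cotr{\phi}{L}$ a Poincar\'e complex outright; the identification $\DP{X} \simeq IX \cup e^{n}$ is then a \emph{consequence} (Proposition \ref{propo:equiv_IX_IXphi}), not the definition. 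Your closing remark already points at this alternative route (Novikov's lemma plus the glueing theorem), and indeed that is how the paper both establishes duality and computes the Witt class: the form on $(\cotr{\phi}{L},L)$ is zero, so $[b_{\DP{X}}]$ is carried entirely by $(X_{reg},\partial X_{reg})$, matching Banagl's computation of $[b_{HI}]=[b_{IH}]$. If you promote that remark from a fallback to the main argument, and supply the existence of $\phi$ with nontrivial Hurewicz image (which is where the Witt/L-space dichotomy really enters), the proof closes.
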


\subsubsection{The even dimensional case}
\label{subsubsec:2k_dim_case}

Consider now $X$ a compact, connected oriented normal pseudomanifold of dimension $n=2s$ with one isolated singularity $\sigma$ of simply connected link $L$. Since $\overline{m}=\overline{n}$ we have a well defined intersection space $IX := \I{m}{X} = \I{n}{X}$.

Let $\phi : S^{2s-1} \rightarrow \cotr{}{L}$ be an arbitrary continuous map with $\cotr{}{L} := \cotr{k(\overline{m})}{L}$ the middle cotruncation of the link. We denote by $\cotr{\phi}{L}$ the space obtained as the result of the following homotopy pushout :
\[
\begin{tikzpicture}[injection/.style={right hook->,fill=white, inner sep=2pt}]

\matrix (m)[matrix of math nodes, row sep=3em, column sep=5em, text height=1.5ex, text depth=0.25ex]
{ S^{2s-1}          &     D^{2s}          \\
\tr{}{L}            &     \cotr{\phi}{L}   \\};

\path[injection]
(m-1-1) edge node[auto] {} (m-1-2);
\path[dashed,->]
(m-2-1) edge node[auto] {} (m-2-2);
\path[->]
(m-1-1) edge node[auto,swap] {$\phi$} (m-2-1);
\path[dashed,->]
(m-1-2) edge node[left=2.5em] {HPO} (m-2-2);
\end{tikzpicture}
\]

\begin{lem}\label{lem:condition_PDpair}
$(\cotr{\phi}{L},L)$ is a Poincaré duality pair if and only if $\phi_{2s-1}$ is an isomorphism. Where $\phi_{2s-1}$ is the connecting homomorphism 
\[
\phi_{2s-1} \col H_{2s}(\cotr{\phi}{L},L) \longrightarrow H_{2s-1}(L)
\]
in the long exact sequence of the pair $(\cotr{\phi}{L},L)$ induced by the attaching map $\phi$.
\end{lem}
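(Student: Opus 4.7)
The plan is to prove the two implications separately, with the bulk of the work in the sufficiency direction.

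For necessity, suppose $(\cotr{\phi}{L},L)$ is a Poincaré duality pair of dimension $2s$ with orientation class $[x]\in H_{2s}(\cotr{\phi}{L},L)$. By the remark preceding theorem~\ref{thm:glueing}, the image $\partial[x]\in H_{2s-1}(L)$ is a fundamental class of the closed oriented manifold $L$, hence a generator of $H_{2s-1}(L)\cong\mathbf{Q}$. Lefschetz duality identifies $H_{2s}(\cotr{\phi}{L},L)\cong H^{0}(\cotr{\phi}{L})\cong\mathbf{Q}$ (the space $\cotr{\phi}{L}$ being connected, as it is obtained by attaching a cell to the connected space $\cotr{}{L}$), so $[x]$ is itself a generator. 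Thus $\phi_{2s-1}=\partial$ sends a generator to a generator and is an isomorphism.

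For sufficiency, assume $\phi_{2s-1}$ is an isomorphism. I would first compute the relevant homology. Because $\cotr{\phi}{L}$ is obtained from $\cotr{}{L}$ by attaching a single $2s$-cell along $\phi$, excision yields $H_{2s}(\cotr{\phi}{L},\cotr{}{L})\cong\mathbf{Q}$, and the long exact sequence of the triple $(\cotr{\phi}{L},\cotr{}{L},L)$, combined with the identification $H_{r}(\cotr{}{L},L)\cong\tilde{H}_{r-1}(\tr{}{L})$ (which vanishes for $r\geq s+1$ by the defining property of the truncation), gives $H_{2s}(\cotr{\phi}{L},L)\cong\mathbf{Q}$ and $H_{r}(\cotr{\phi}{L},L)\cong H_{r-1}(L)$ for $2\leq r\leq s$, vanishing in the other positive degrees below $2s$. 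The hypothesis then forces $H_{2s-1}(\cotr{\phi}{L})=H_{2s}(\cotr{\phi}{L})=0$ via the long exact sequence of the pair, while in lower degrees $H_{r}(\cotr{\phi}{L})$ agrees with $H_{r}(\cotr{}{L})$, which is $0$ for $1\leq r<s$ and $H_{r}(L)$ for $s\leq r\leq 2s-2$. A dimension count against Poincaré duality on the closed manifold $L$ confirms matching dimensions on both sides of the candidate Lefschetz duality.

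To upgrade dimension-matching to an actual cap-product isomorphism, choose $[x]\in H_{2s}(\cotr{\phi}{L},L)\cong\mathbf{Q}$ a generator with $\partial[x]=[L]\in H_{2s-1}(L)$. The cap products with $[x]$ and $[L]$ assemble into a morphism between the cohomology LES of the pair $(\cotr{\phi}{L},L)$ and the corresponding homology LES, appropriately shifted. The vertical map $-\cap[L]\col H^{\ast}(L)\to H_{2s-1-\ast}(L)$ is an isomorphism by ordinary Poincaré duality on $L$, and $-\cap[x]\col H^{0}(\cotr{\phi}{L})\to H_{2s}(\cotr{\phi}{L},L)$ is an isomorphism because it sends the unit class to the generator $[x]$. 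A five-lemma chase then propagates the isomorphism across the ladder, establishing Lefschetz duality in every degree. The trickiest part is keeping the two distinct cap products and the sign conventions in the naturality with $\partial$ coherent as one climbs the ladder; but once anchored at degree $0$ together with the manifold duality on $L$, the propagation is purely formal.
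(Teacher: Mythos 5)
Your proof is correct and follows the same route as the paper's: necessity by observing that the boundary of an orientation class must be the fundamental class of $L$, and sufficiency by setting $[e_{\phi}] := \phi_{2s-1}^{-1}([L])$ and verifying the commutative cap-product ladder against ordinary Poincaré duality on $L$. The paper dismisses this verification as ``straightforward calculations,'' so your explicit homology computation and dimension count actually supply more detail than the original; the only caveat is that the closing step is really an injectivity-plus-dimension-count argument rather than a literal five-lemma application, since the complementary cap product $H^{r}(\cotr{\phi}{L},L)\to H_{2s-r}(\cotr{\phi}{L})$ is not known to be an isomorphism in advance.
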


\begin{proof}
Suppose $(\cotr{\phi}{L},L)$ is a Poincaré duality pair and denote by $[e_{\phi}]$ a choice of orientation class for the pair. By definition we have $\partial_{2s}[e_{\phi}] = [L]$, but $\partial_{2s} = \phi_{2s-1}$.

On the other hand, if $\phi_{2s-1} : H_{2s}(\cotr{\phi}{L},L) \cong H_{2s-1}(L)$ is an isomorphism let us denote by $[e_{\phi}]:= \phi_{2s-1}^{-1}([L])$. We have to check the commutativity of the following square for the different values of $r$
\[
\begin{tikzpicture}
\matrix (m)[matrix of math nodes, row sep=3em, column sep=5em, text height=1.5ex, text depth=0.25ex]
{ H^{r}(\cotr{\phi}{L})          &     H_{2s-r}(\cotr{\phi}{L},L)          \\
H^{r}(L)                        &     H_{2s-1-r}(L)         \\};

\path[->]
(m-1-1) edge node[auto] {$- \cap [e_{\phi}]$} (m-1-2);
\path[->]
(m-2-1) edge node[auto] {$-\cap [L]$} node[auto, swap]{$\cong$} (m-2-2);
\path[->]
(m-1-1) edge node[auto,swap] {$\text{incl}^{\ast}$} (m-2-1);
\path[->]
(m-1-2) edge node[auto] {$\partial_{2s-r}$} (m-2-2);
\end{tikzpicture}
\]
and the fact that this induces an isomorphism on the upper row. Which are straightforward calculations.
\end{proof}

We now look at a condition on $\phi$ to be an isomorphism.

\begin{lem}
$\phi_{2s-1}$ is an isomorphism if and only if 
\[
\textsc{Hur}_{2s-1}([\phi]) \neq 0 
\]
in $H_{2s-1}(\cotr{}{L}) = H_{2s-1}(L) = \mathbf{Q}$.
\end{lem}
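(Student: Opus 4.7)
The plan is to reduce the connecting map $\phi_{2s-1}$ to the cellular boundary of the attached top cell, via naturality of the long exact sequence. First I would analyze the long exact sequence of the pair $(\cotr{}{L},L)$. The cofiber sequence $\tr{}{L} \to L \to \cotr{}{L}$ together with the vanishing $H_{r}(\tr{}{L}) = 0$ for $r \geq k(\overline{m}) = s$ (recalled in the truncation formula) shows that the canonical map $L \to \cotr{}{L}$ induces an isomorphism on $H_{r}$ for every $r \geq s$. Combining this with the fact that $L$ is a closed $(2s-1)$-manifold (so $H_{2s}(L) = 0 = H_{2s}(\cotr{}{L})$), the pair long exact sequence yields
\[
H_{2s}(\cotr{}{L},L) = 0 \quad \text{and} \quad H_{2s-1}(\cotr{}{L},L) = 0.
\]

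Next I would feed these vanishings into the long exact sequence of the triple $(\cotr{\phi}{L}, \cotr{}{L}, L)$:
\[
H_{2s}(\cotr{}{L},L) \longrightarrow H_{2s}(\cotr{\phi}{L},L) \longrightarrow H_{2s}(\cotr{\phi}{L},\cotr{}{L}) \longrightarrow H_{2s-1}(\cotr{}{L},L).
\]
The outer terms are zero, so the middle arrow is an isomorphism. Since $\cotr{\phi}{L}$ is obtained from $\cotr{}{L}$ by attaching one $2s$-cell along $\phi$, the right-hand group is $\mathbf{Q}$, generated by the characteristic class of that cell; consequently $H_{2s}(\cotr{\phi}{L},L) \cong \mathbf{Q}$.

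Finally, the inclusion of pairs $(\cotr{\phi}{L},L) \hookrightarrow (\cotr{\phi}{L},\cotr{}{L})$ produces a commutative ladder
\[
\begin{tikzpicture}
\matrix (m)[matrix of math nodes, row sep=2.5em, column sep=3em, text height=1.5ex, text depth=0.25ex]
{ H_{2s}(\cotr{\phi}{L},L) & H_{2s-1}(L) \\
  H_{2s}(\cotr{\phi}{L},\cotr{}{L}) & H_{2s-1}(\cotr{}{L}) \\};
\path[->] (m-1-1) edge node[auto] {$\phi_{2s-1}$} (m-1-2);
\path[->] (m-2-1) edge node[auto] {$\partial$} (m-2-2);
\path[->] (m-1-1) edge node[auto] {$\cong$} (m-2-1);
\path[->] (m-1-2) edge node[auto] {$\cong$} (m-2-2);
\end{tikzpicture}
\]
whose left vertical arrow is the isomorphism produced by the triple sequence and whose right vertical arrow is the isomorphism $H_{2s-1}(L) \cong H_{2s-1}(\cotr{}{L})$. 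The bottom horizontal map is the standard cellular attaching-map boundary, which sends the chosen generator of $H_{2s}(\cotr{\phi}{L},\cotr{}{L})$ exactly to $\textsc{Hur}_{2s-1}([\phi])$. Because source and target of $\phi_{2s-1}$ are both one-dimensional over $\mathbf{Q}$, $\phi_{2s-1}$ is an isomorphism if and only if it is nonzero, if and only if $\textsc{Hur}_{2s-1}([\phi]) \neq 0$. I do not anticipate a real obstacle; the only care point is the naturality of the connecting homomorphism, which is handled cleanly by the triple-sequence ladder above.
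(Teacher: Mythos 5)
Your proof is correct. The paper actually states this lemma without any proof, so there is nothing to compare against; your argument — reducing the connecting homomorphism $\phi_{2s-1}$ of the pair $(\cotr{\phi}{L},L)$ to the cellular boundary of the attached $2s$-cell via the vanishing of $H_{2s}(\cotr{}{L},L)$ and $H_{2s-1}(\cotr{}{L},L)$, the triple sequence, and naturality of $\partial$ — is the standard one, and every step checks out (in particular $H_{2s}(\cotr{\phi}{L},L)\cong\mathbf{Q}$ and $H_{2s-1}(L)\cong\mathbf{Q}$, so "isomorphism" does reduce to "nonzero"). It supplies exactly the justification the paper omits.
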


Since the pair $(X_{reg}, \partial X_{reg})$, with $\partial X_{reg} = L$, satisfies Poincaré-Lefschetz duality this is a Poincaré duality pair with the same boundary that $(\cotr{\phi}{L},L)$. The space
\[
\DP{X} := X_{reg} \cup_{L} \cotr{\phi}{L}
\]
is then a Poincaré complex of dimension $2s$ whitout boundary and of orientation class given by
\[
[\DP{X}]= i^{-1}(i_{1} \oplus i_{2}([X_{reg},L],[-e_{\phi}]))
\]

We now show the relation between $IX$ and $\DP{X}$.

\begin{propo}\label{propo:equiv_IX_IXphi}
Let $X$ be a compact, connected oriented normal pseudomanifold of dimension $n=2s$ with one isolated singularity of link $L$ simply connected. The space $\DP{X}$ is then rationally homotopy equivalent to $IX \cup e^{2s}$. If moreover $H^{1}(X_{reg})=0$, then $e^{2s}$ is attached by ordinary Whitehead products (not iterated) with respect to some basis of $\pi_{\ast}(IX) \otimes \mathbf{Q}$ and the rational homotopy type of $\DP{X}$ is determined by $IX$.
\end{propo}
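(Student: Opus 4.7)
The plan is to identify $\DP{X}$ directly as $IX$ with a top cell attached, and then invoke Stasheff's rigidity result \ref{thm:stasheff} to conclude the rational uniqueness.

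First I would unwind the definitions. By construction $\DP{X} = X_{reg} \cup_{L} \cotr{\phi}{L}$, while $\cotr{\phi}{L}$ itself is obtained from $\cotr{}{L}$ by attaching a single $(2s)$-cell along $\phi\col S^{2s-1} \to \cotr{}{L}$. Associativity of (homotopy) pushouts then yields
\[
\DP{X} \;\simeq\; \bigl( X_{reg} \cup_{L} \cotr{}{L} \bigr) \cup_{\bar\phi} D^{2s} \;\simeq\; IX \cup_{\bar\phi} e^{2s},
\]
where $\bar\phi$ is the composite of $\phi$ with the structural inclusion $\cotr{}{L} \hookrightarrow IX$, and the middle space is $IX$ by definition \ref{def:intersectionspace}. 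This settles the first assertion.

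For the second assertion I would apply theorem \ref{thm:stasheff} with $Y = \DP{X}$, $X = IX$, and $H = H^{*}(\DP{X})$. The hypotheses to be checked are the following.
\begin{itemize}
\item $H^{*}(\DP{X})$ is a Poincaré duality algebra of top dimension $2s$ with $H^{1} = 0$. The first point holds because $\DP{X}$ is a rational Poincaré complex (obtained by gluing two Poincaré duality pairs along their common boundary via theorem \ref{thm:glueing}). The vanishing of $H^{1}$ follows from $H^{1}(X_{reg}) = 0$ together with a Mayer--Vietoris computation on the defining pushout of $IX$, using that $L$ and $\cotr{}{L}$ are simply connected (so $H^{1}(\cotr{}{L}) = 0$ since $k(\overline{m}) = s \geq 2$).
\item $IX$ is (rationally) simply connected and $H^{r}(IX) = H^{r}(\DP{X})$ for $r < 2s$, while $H^{2s}(IX) = 0$. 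The cellular inclusion $IX \hookrightarrow IX \cup_{\bar\phi} e^{2s} \simeq \DP{X}$ is an equivalence in degrees $< 2s$. For the vanishing of $H^{2s}(IX)$, the relevant portion of the Mayer--Vietoris sequence reads
\[
H^{2s-1}(X_{reg}) \oplus H^{2s-1}(\cotr{}{L}) \longrightarrow H^{2s-1}(L) \longrightarrow H^{2s}(IX) \longrightarrow H^{2s}(X_{reg}) \oplus H^{2s}(\cotr{}{L}).
\]
The term on the right vanishes because $X_{reg}$ is a $2s$-manifold with non-empty boundary and $\cotr{}{L}$ agrees with $L$ in top degree. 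Moreover, by the cotruncation formula of lemma \ref{lem:cotruncation}, the map $H^{2s-1}(\cotr{}{L}) \to H^{2s-1}(L)$ is an isomorphism, so the leftmost arrow is surjective. Therefore $H^{2s}(IX) = 0$.
\end{itemize}

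Stasheff's theorem then gives simultaneously that $\bar\phi$ is a sum of ordinary (non-iterated) Whitehead products with respect to some basis of $\pi_{*}(IX) \otimes \mathbf{Q}$ and that the rational homotopy type of $\DP{X} \simeq IX \cup e^{2s}$ is determined by $IX$. The main technical obstacle is precisely the Mayer--Vietoris verification that $H^{2s}(IX) = 0$: the other hypotheses are essentially formal once the pushout decomposition is in place, whereas this vanishing is where the specific cohomological behavior of the cotruncation at the borderline degree $k(\overline{m}) = s$ enters decisively.
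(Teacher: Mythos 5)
Your proposal is correct and follows essentially the same route as the paper: both identify $\DP{X}\simeq IX\cup_{\bar\phi}e^{2s}$ by pasting the defining homotopy pushouts of $IX$ and $\cotr{\phi}{L}$, and then invoke theorem \ref{thm:stasheff}. Your explicit Mayer--Vietoris verification of the hypotheses of Stasheff's theorem (in particular $H^{2s}(IX)=0$) is a welcome elaboration of a step the paper leaves implicit, but it is not a different method.
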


\begin{proof}
Consider the following diagram.
\[
\begin{tikzpicture}[injection/.style={right hook->,fill=white, inner sep=2pt}]

\matrix (m)[matrix of math nodes, row sep=2em, column sep=5em, text height=1.5ex, text depth=0.25ex]
{           & L=\partial X_{reg} & X_{reg}         \\
S^{2s-1}    & \cotr{}{L}         & IX        \\
D^{2s}      & \cotr{\phi}{L}     & \DP{X} \\};

\path[->]
(m-1-2) edge node[style] {} (m-2-2);
\path[->]
(m-1-3) edge node[left=2em] {(I) HPO} (m-2-3);
\path[->]
(m-2-1) edge node[auto,swap] {$i_{2}$} (m-3-1);
\path[->]
(m-2-2) edge node[left=2em] {(III) HPO} (m-3-2);
\path[->]
(m-2-3) edge node[left=2.5em] {(II)} (m-3-3);

\path[injection]
(m-1-2) edge node[auto] {$i_{1}$} (m-1-3);
\path[->]
(m-2-1) edge node[auto]{$\phi$} (m-2-2);
\path[->]
(m-2-2) edge node[style] {} (m-2-3);
\path[->]
(m-3-1) edge node[style] {} (m-3-2);
\path[->]
(m-3-2) edge node[style] {} (m-3-3);
\end{tikzpicture}
\]

The square $(I)$ is a homotopy pushout by definition of the construction of the space $IX$, since $i_{1}$ is a cofibration $\DP{X}$ is the homotopy pushout of the diagram $\tr{}{L}_{\phi} \leftarrow \tr{}{L} \leftarrow L \hookrightarrow M$. By the properties of successive homotopy pushouts (see for exemple \cite{Strom2011}) the square $(II)$ is a homotopy pushout. The square $(III)$ is also a homotopy pushout by definition of $\cotr{\phi}{L}$, this implies that the outside square $(II) + (III)$ is a homotopy pushout. We have the commutative square 
\[
\begin{tikzpicture}
\matrix (m)[matrix of math nodes, row sep=2em, column sep=5em, text height=1.5ex, text depth=0.25ex]
{S^{2s-1}    & IX        \\
 D^{2s}      & \DP{X} \\};

\path[->]
(m-1-1) edge node[style] {} (m-1-2);
\path[->]
(m-1-1) edge node[style] {} (m-2-1);
\path[->]
(m-1-2) edge node[style] {} (m-2-2);
\path[->]
(m-2-1) edge node[style] {} (m-2-2);
\end{tikzpicture}
\]
which is then a homotopy pushout.

So we have a rational homotopy equivalence between $\DP{X}$ and $IX \cup e^{2s}$.

Suppose now that we also have $H^{1}(X_{reg})=0$. The space $\DP{X}$ is simply connected and the theorem \ref{thm:stasheff} then tells us how $e^{2s}$ is attached to $IX$.
\end{proof}

In the case of a pseudomanifold of dimension $n=4s$ with isolated singularities, Markus Banagl showed in \cite[theorem 2.28]{Banagl2010} that the intersection form 
\[
b_{HI} \col \redhI{m}{2s}{X} \otimes \redhI{m}{2s}{X} \longrightarrow \mathbf{Q}
\]
has the same Witt element that the Goresky-MacPherson intersection form
\[
b_{IH} \col \iH{m}{2s}{X} \otimes \iH{m}{2s}{X} \longrightarrow \mathbf{Q}.
\]
that is $[b_{HI}]=[b_{IH}] \in W(\mathbf{Q})$, where $W(\mathbf{Q})$ is the Witt group of the rationals.

Applying the lemma \ref{lem:novikov} to the Poincaré duality pair $(\cotr{\phi}{L}, L)$ constructed above shows that this pair is endowed with a symmetric bilinear form of kernel $i_{\ast}(H_{2s}(L)) = H_{2s}(\cotr{\phi}{L})$, that is the form is the zero form. We see that the Witt class of the intersection form $[b_{\DP{X}}]$ of $\DP{X}$ is completely determined by the intersection form of the regular part $(X_{reg}, \partial X_{reg})$. Which is also the case of $IX$ as showed in \cite[theorem 2.28]{Banagl2010}. Therefore we have the following corollary.

\begin{cor}
$\DP{X}$ is a Poincaré duality rational space whose Witt class associated to the intersection form $b_{\DP{X}}$ is the same that the Witt class associated to the middle intersection cohomology of $X$.
\end{cor}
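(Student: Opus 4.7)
The plan is to establish two things in turn: first that $\DP{X}$ is a rational Poincaré duality space, and second that when $\dim X = 4s$ one has $[b_{\DP{X}}] = [b_{IH}]$ in $W(\mathbf{Q})$.

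For Poincaré duality I would simply collect what has already been set up above: the pair $(X_{reg}, L)$ is a Poincaré--Lefschetz pair of dimension $n=2s$, and the pair $(\cotr{\phi}{L}, L)$ is a Poincaré duality pair of the same dimension by Lemma \ref{lem:condition_PDpair}, once $\phi$ is chosen so that $\textsc{Hur}_{n-1}([\phi]) \neq 0$. Since these two pairs share the oriented boundary $(L,[L])$, the gluing Theorem \ref{thm:glueing}, applied with $[\hat{x}] = [\DP{X}]$, immediately produces the oriented Poincaré complex $\DP{X} = X_{reg} \cup_L \cotr{\phi}{L}$ without boundary.

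For the Witt-class identity, assume $n=4s$. Novikov's Lemma \ref{lem:novikov} applied to the same gluing yields
\[
[b_{\DP{X}}] = [b_{X_{reg}}] - [b_{\cotr{\phi}{L}}] \in W(\mathbf{Q}).
\]
The heart of the argument is to check $[b_{\cotr{\phi}{L}}] = 0$. The middle-dimensional form on $H_{2s}(\cotr{\phi}{L})$ has kernel $i_{\ast}(H_{2s}(L))$, so it suffices to show that $i_{\ast}$ is surjective. Since $\phi : S^{4s-1} \to \cotr{}{L}$ and both $H_{2s}(S^{4s-1})$ and $H_{2s-1}(S^{4s-1})$ vanish, attaching the $4s$-cell contributes nothing in degree $2s$, giving $H_{2s}(\cotr{\phi}{L}) \cong H_{2s}(\cotr{}{L})$. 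A further chase of the cofiber sequence $\tr{}{L} \to L \to \cotr{}{L}$---using the truncation property $H_{2s}(\tr{}{L}) = 0$ together with the fact that $H_{2s-1}(\tr{}{L}) \to H_{2s-1}(L)$ is an isomorphism---then shows that $H_{2s}(L) \to H_{2s}(\cotr{}{L})$ is an isomorphism. Composing, $i_{\ast}$ is itself an isomorphism, the form is identically zero, and hence $[b_{\cotr{\phi}{L}}] = 0$.

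To conclude I would invoke Banagl's Theorem 2.28 in \cite{Banagl2010}, which shows that $[b_{HI}]$ and $[b_{IH}]$ coincide and are both equal to the Witt class $[b_{X_{reg}}]$ of the intersection form on the regular part. Chained with the Novikov computation this gives $[b_{\DP{X}}] = [b_{X_{reg}}] = [b_{IH}]$, as required. The only mildly technical step is the degeneracy of the form on $\cotr{\phi}{L}$; this is what makes the ``cotruncation side'' of the Novikov contribution trivial, and the chase above reduces it to the two elementary facts that attaching a top-dimensional cell does not touch middle homology and that middle-perversity truncation is an isomorphism in all degrees strictly below the middle.
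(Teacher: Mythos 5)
Your argument is correct and follows the same route as the paper: glue the two Poincaré duality pairs via Theorem \ref{thm:glueing}, apply Novikov's Lemma \ref{lem:novikov}, observe that the form on $\cotr{\phi}{L}$ is zero because its kernel $i_{\ast}(H_{2s}(L))$ is all of $H_{2s}(\cotr{\phi}{L})$, and conclude via Banagl's Theorem 2.28 that both $[b_{\DP{X}}]$ and $[b_{IH}]$ reduce to the Witt class of the regular part. Your cofiber-sequence chase merely spells out the surjectivity of $i_{\ast}$, which the paper asserts without detail, so the two proofs coincide in substance.
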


\subsubsection{The odd dimensional case}
\label{subsubsec:2k+1_dim_case}

Consider now $X$ a compact, oriented pseudomanifold of dimension $n=2s+1$ with one isolated singularity $\sigma$ of simply connected link $L$. 

First let us consider that $X$ is a Witt space, that is $H_{s}(L) = 0$. We then have the following proposition.

\begin{propo}\label{propo:2k+1_Witt_case}
Let $X$ be a compact, connected oriented normal pseudomanifold of dimension $n=2s+1$ with one isolated singularity $\sigma$ of simply connected link $L$. Suppose moreover that $X$ is a Witt space. The constructions of the even dimensional case extend to this case and there exists a rational Poincaré approximation $\DP{X}$ of $X$.
\end{propo}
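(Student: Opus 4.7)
The plan is to mimic the construction of Subsection~\ref{subsubsec:2k_dim_case} with all cellular dimensions shifted by one. Set $\cotr{}{L} := \cotr{k(\overline{m})}{L}$; since $n = 2s+1$ we have $k(\overline{m}) = s+1$, and combining the defining homology of the cotruncation with the Witt hypothesis $H_{s}(L) = 0$ yields
\[
\widetilde{H}_{r}(\cotr{}{L}; \mathbf{Q}) = \begin{cases} 0 & 1 \leq r \leq s, \\ H_{r}(L; \mathbf{Q}) & s+1 \leq r \leq 2s. \end{cases}
\]
In particular $\cotr{}{L}$ is rationally $s$-connected, which is exactly what the Witt hypothesis buys us: in the general odd case one obtains only $(s-1)$-connectivity, which is insufficient for the Hurewicz step below.

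First, I would apply the rational Hurewicz theorem (Theorem~\ref{thm:Qhurewicz}) with $r = s+1$ to $\cotr{}{L}$, producing an isomorphism $\pi_{i}(\cotr{}{L}) \otimes \mathbf{Q} \cong H_{i}(\cotr{}{L})$ for $1 \leq i < 2s+1$. At $i = 2s$ this gives $\pi_{2s}(\cotr{}{L}) \otimes \mathbf{Q} \cong H_{2s}(L) = \mathbf{Q}$, so I can choose a map $\phi \col S^{2s} \to \cotr{}{L}$ whose rational Hurewicz image generates $H_{2s}(L)$. Define $\cotr{\phi}{L} := \cotr{}{L} \cup_{\phi} D^{2s+1}$, the homotopy pushout of $D^{2s+1} \hookleftarrow S^{2s} \overset{\phi}{\longrightarrow} \cotr{}{L}$.

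Second, I would establish the $(2s+1)$-dimensional analog of Lemma~\ref{lem:condition_PDpair}: the pair $(\cotr{\phi}{L}, L)$ is a Poincaré duality pair of dimension $2s+1$ if and only if the connecting map $H_{2s+1}(\cotr{\phi}{L}, L) \to H_{2s}(L)$ is an isomorphism, and this is equivalent to $\mathrm{Hur}_{2s}([\phi]) \neq 0$. The proof is exactly the diagram chase of the even case with all degrees shifted by one, yielding the orientation class $[e_{\phi}]$. Since $(X_{reg}, L)$ is a Poincaré-Lefschetz pair of dimension $2s+1$ sharing its oriented boundary $(L, [L])$ with $(\cotr{\phi}{L}, L, [e_{\phi}])$, the Novikov gluing theorem (Theorem~\ref{thm:glueing}) produces a closed Poincaré duality space
\[
\DP{X} := X_{reg} \cup_{L} \cotr{\phi}{L}
\]
of dimension $2s+1$.

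Finally I would verify the approximation conditions of Definition~\ref{def:Q_Poinc_approx}. The map $\phi \col X_{reg} \hookrightarrow \DP{X}$ is the natural inclusion coming from the pushout; the map $\psi \col \DP{X} \to \overline{X}$ is defined as the identity on $X_{reg}$ and the collapse of $\cotr{\phi}{L}$ onto the cone on $L$ inside $\overline{X}$ (which exists up to homotopy since the cone is contractible). The required ranges for $\phi_{r}$ and $\psi_{r}$ then follow from a Mayer-Vietoris comparison between the pushout $\DP{X}$ and the pushout presentation of $\overline{X}$ as $X_{reg}$ glued to the cone on $L$, using the homology of $\cotr{\phi}{L}$ computed above. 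The main obstacle is this final bookkeeping step: tracking precisely which degrees in Definition~\ref{def:Q_Poinc_approx} are delivered by Mayer-Vietoris, and confirming that the Witt hypothesis has been used in exactly the right place (the rational Hurewicz step) and is not silently needed elsewhere.
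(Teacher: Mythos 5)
Your construction follows the paper's route step for step (middle cotruncation of the link, rational Hurewicz to produce $\phi$, the capped-off pair $(\cotr{\phi}{L},L)$, and the glueing theorem \ref{thm:glueing}), and the conclusion is correct under the stated hypotheses; but you have misplaced the role of the Witt hypothesis, and the step you dismiss as ``exactly the diagram chase of the even case with all degrees shifted by one'' is precisely where it is indispensable. With your choice $k(\overline{m})=s+1$, the vanishing $\widetilde{H}_{r}(\cotr{}{L})=0$ for $1\leq r\leq s$ is simply the definition of the cotruncation and uses nothing about $L$; hence $\cotr{}{L}$ is rationally $s$-connected and the Hurewicz step goes through for \emph{any} simply connected link. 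The Witt condition is not what makes the Hurewicz argument work.

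Where it is genuinely needed is in verifying that $(\cotr{\phi}{L},L)$ is a Poincaré duality pair. In the even-dimensional case the middle degree $s$ of the $(2s-1)$-dimensional link is retained by the cotruncation, and duality $H^{r}(\cotr{\phi}{L})\cong H_{2s-r}(\cotr{\phi}{L},L)$ holds in every degree once the connecting map is an isomorphism. In the odd case the link has even dimension $2s$, the cotruncation at $s+1$ gives $H^{s}(\cotr{\phi}{L})=0$, while the long exact sequence of the pair gives $H_{s+1}(\cotr{\phi}{L},L)\cong H_{s}(L)$; duality in degree $s$ therefore forces $H_{s}(L)=0$. So your ``iff'' lemma is false for a non-Witt link even though the connecting map $H_{2s+1}(\cotr{\phi}{L},L)\to H_{2s}(L)$ is still an isomorphism there, and the hypothesis you feared might be ``silently needed elsewhere'' is needed exactly at this point. (The paper encodes this by working with $\cotr{k(\overline{n})}{L}$, $k(\overline{n})=s$, and using the Witt condition to identify it with $\cotr{k(\overline{m})}{L}$ and to get $s$-connectivity.) The remainder of your argument --- the glueing and the Mayer--Vietoris bookkeeping for the conditions of Definition \ref{def:Q_Poinc_approx} --- matches the paper.
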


\begin{proof}
We have to show that there exists a cotruncation $\cotr{}{L}$ of the link of the singularity and a map $\phi \in \pi_{2s}(\cotr{}{L})\otimes \mathbf{Q}$ such that the pair $(\cotr{\phi}{L}, L)$ is a Poincaré duality pair. If so, the theorem \ref{thm:glueing} and the proposition \ref{propo:equiv_IX_IXphi} can be applied.

Consider the cotruncation $\cotr{k(\overline{n})}{L}$ given by the upper middle perversity $\overline{n}$. By definition of Witt spaces and of the cotruncation we have
\[
H_{s}(\cotr{k(\overline{n})}{L}) = H_{s}(L) = 0.
\]
So in fact $\cotr{k(\overline{n})}{L}$ is $s$-connected and we have 
\[
\cotr{k(\overline{m})}{L} = \cotr{k(\overline{n})}{L} := \tr{}{L}.
\]

By the rational Hurewicz theorem we have the isomorphism
\[
\pi_{2s}(\tr{}{L}) \otimes \mathbf{Q} \overset{\cong}{\longrightarrow} H_{2s}(\tr{}{L}).
\]

We still denote by $\phi$ the map obtained by this isomorphism, $\phi_{2s}$ as in the lemma \ref{lem:condition_PDpair} is then a isomorphism and the pair $(\cotr{\phi}{L}, L)$ is a Poincaré duality pair.
\end{proof}

Every pseudomanifold $X$ of dimension $n=2s+1$ which is not a Witt space is in fact an L-space due to the following result of Thom.
\begin{lem}[Thom, \cite{Hirzebruch1966}]\label{lem:thom_hyperb_link}
Let $(X,[x])$ be a rational Poincaré complex of dimension $n=4s$ such that $(X,[x])$ is the boundary of a rational Poincaré duality pair $(Y,[y])$. Then $[b_{X}] = 0 \in W(\mathbf{Q})$. 
\end{lem}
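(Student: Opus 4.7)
The plan is to exhibit a Lagrangian subspace of $H^{2s}(X;\mathbf{Q})$ for the symmetric intersection form $b_X$. Once such a subspace is found, $b_X$ is hyperbolic (one half of a hyperbolic basis can be completed as in theorem \ref{thm:hyperb_completion}), and hence $[b_X] = 0$ in $W(\mathbf{Q})$.

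First I would set
\[
L := \operatorname{im}\bigl(i^{\ast} \col H^{2s}(Y;\mathbf{Q}) \longrightarrow H^{2s}(X;\mathbf{Q})\bigr),
\]
where $i \col X \to Y$ is the inclusion of the boundary, and check that $L$ is isotropic. For any $\alpha, \beta \in H^{2s}(Y;\mathbf{Q})$, since $i^{\ast}$ is a ring map and by adjointness of cap/cup with the evaluation pairing,
\[
b_X(i^{\ast}\alpha,\, i^{\ast}\beta) \;=\; \langle i^{\ast}\alpha \cup i^{\ast}\beta,\, [x]\rangle \;=\; \langle \alpha \cup \beta,\, i_{\ast}[x]\rangle.
\]
Because $[x] = \partial [y]$, the exactness of the long exact sequence of the pair $(Y,X)$ gives $i_{\ast}[x] = i_{\ast}\partial[y] = 0$, so $b_X$ vanishes identically on $L$.

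Next I would show that $L$ is a Lagrangian by a dimension count. The key input is Poincaré--Lefschetz duality for $(Y,X)$: capping with $[y]$ provides isomorphisms $H^{k}(Y) \cong H_{4s+1-k}(Y,X)$ and $H^{k}(Y,X) \cong H_{4s+1-k}(Y)$ which intertwine the connecting homomorphisms of the cohomological and homological long exact sequences. Applied to
\[
H^{2s}(Y) \overset{i^{\ast}}{\longrightarrow} H^{2s}(X) \overset{\delta}{\longrightarrow} H^{2s+1}(Y,X),
\]
this identifies, through the non-degenerate pairing $b_X$ on $H^{2s}(X;\mathbf{Q})$ supplied by Poincaré duality on $X$, the map $i^{\ast}$ and the map $\delta$ as transposes of each other. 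A rank count via exactness then forces $\dim L = \dim \operatorname{im}(i^{\ast}) = \dim \operatorname{im}(\delta) = \tfrac{1}{2}\dim H^{2s}(X;\mathbf{Q})$, so $L = L^{\perp}$ and $L$ is Lagrangian.

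The main obstacle is this duality/rank step: one must carefully verify that capping with the relative fundamental class $[y]$ really does turn $i^{\ast}$ into the $b_X$-adjoint of $\delta$, so that isotropy forces the half-dimension conclusion without loose ends coming from kernels of the various Poincaré--Lefschetz isomorphisms. Once that is done, the existence of a Lagrangian $L \subset H^{2s}(X;\mathbf{Q})$ makes $b_X$ metabolic and therefore $[b_X] = 0$ in $W(\mathbf{Q})$, which is the content of the lemma.
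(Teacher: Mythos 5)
Your argument is correct; note that the paper gives no proof of this lemma at all (it is quoted from Hirzebruch), and what you have written is precisely the classical Thom argument that the citation stands for. The only step you flag as delicate --- that capping with $[y]$ turns $i^{\ast}$ into the $b_{X}$-adjoint of $\delta$ --- is settled cleanly by the identity
\[
b_{X}(i^{\ast}\alpha,\xi)\;=\;\langle i^{\ast}\alpha\cup\xi,[x]\rangle\;=\;\langle \alpha,\, i_{\ast}(\xi\cap[x])\rangle\;=\;\pm\langle \alpha,\, \delta(\xi)\cap[y]\rangle,
\]
which follows from naturality of the cap product together with the (sign-)commutativity of the Poincar\'e--Lefschetz ladder intertwining $\delta$ with $\partial$. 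Since $-\cap[y]\col H^{2s+1}(Y,X)\rightarrow H_{2s}(Y)$ is an isomorphism and the evaluation pairing is non-degenerate over $\mathbf{Q}$, this identity yields $L^{\perp}=\ker\delta=\im{i^{\ast}}=L$ in one stroke, so you may bypass the separate rank count entirely; your rank count is also valid, since $\im{i^{\ast}}$ and $\im{\delta}$ are images of transposed maps and exactness at $H^{2s}(X)$ splits the dimension in half. With $L=L^{\perp}$ the form is metabolic, hence hyperbolic over $\mathbf{Q}$ by theorem \ref{thm:hyperb_completion}, and $[b_{X}]=0$ in $W(\mathbf{Q})$ as claimed.
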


The link $L$ of the singularity of $X$ is a manifold of dimension $2s$ and we have the non degenerate bilinear form induced by Poincaré duality.
\[
b_{L} : H^{s}(L) \times H^{s}(L) \longrightarrow \mathbf{Q}
\]

Suppose that $X$ is not a Witt space. By this result of Thom, the Witt class $[b_{L}] \in W(\mathbf{Q})$ of the intersection form associated to $L$ is zero. This implies that $b_{L}$ is hyperbolic and we have the existence of a Lagrangian subspace. 

\begin{cor}
Any compact, connected oriented normal pseudomanifold of dimension $4s+i$, $i=1,3$, with one isolated singularity $\sigma$ of simply connected link $L$ is an L-space.
\end{cor}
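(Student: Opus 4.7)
The plan is to split on $n \bmod 4$ and reduce to results already proved. When $n = 4s+3$, the link $L$ is a closed oriented $(4s+2)$-manifold; by graded commutativity of the cup product, the middle pairing $b_{L}$ on $H^{2s+1}(L)$ is skew-symmetric, hence a non-degenerate symplectic form, and automatically admits a Lagrangian subspace. This is exactly part (1) of the preceding proposition, so nothing new is needed in this dimension.

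When $n = 4s+1$, the link $L$ is a closed oriented $4s$-manifold and $b_{L}$ is symmetric. I would first dispense with the Witt case: if $X$ is a Witt space then $H^{2s}(L) = 0$ by definition, and the zero subspace vacuously satisfies the Lagrangian dimension condition, so $X$ is an L-space. For the non-Witt case, I would use the fact that the regular part $X_{reg}$ is a compact oriented $(4s+1)$-manifold with boundary $\partial X_{reg} = L$. Consequently $(X_{reg}, L)$ is a rational Poincaré--Lefschetz duality pair whose oriented boundary, in the sense of the remark following the definition of a Poincaré pair, is $(L, [L])$. Lemma \ref{lem:thom_hyperb_link} of Thom then forces $[b_{L}] = 0$ in $W(\mathbf{Q})$. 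Since a non-degenerate symmetric bilinear form over $\mathbf{Q}$ with vanishing Witt class is hyperbolic (by Witt cancellation in $W(\mathbf{Q})$), the form $b_{L}$ is hyperbolic and hence admits a Lagrangian subspace, so $X$ is an L-space.

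The only step with any real content is checking that the hypotheses of Thom's lemma apply, i.e.\ that $(X_{reg}, L)$ is a genuine rational Poincaré duality pair with the claimed oriented boundary. This is just classical Poincaré--Lefschetz duality for a compact oriented manifold with boundary, so there is no actual obstacle. The whole statement therefore reduces to book-keeping on top of Lemma \ref{lem:thom_hyperb_link} and the preceding proposition, combined with the observation that Witt spaces are trivially L-spaces in the only non-automatic dimension $n = 4s+1$.
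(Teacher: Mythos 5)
Your argument is correct and follows the same route as the paper: the $4s+3$ case is handled by skew-symmetry of the middle pairing, and the $4s+1$ case by applying Thom's Lemma \ref{lem:thom_hyperb_link} to the Poincaré--Lefschetz pair $(X_{reg},L)$ to conclude $[b_{L}]=0$ and hence hyperbolicity. Your separate treatment of the Witt subcase is a harmless refinement; the paper simply applies the Thom argument directly.
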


Let us then fix $X$ a compact oriented $L$-space of dimension $4s+i$, $i=1,3$, with one isolated singularity $\sigma$ of simply connected link $L$. The link $L$ of the singularity is of dimension $4s+i-1$ and thanks to the lemma \ref{lem:thom_hyperb_link} we have
\[
H^{2s+\frac{i-1}{2}}(L) = V \oplus V^{\ast}
\]
where $V$ is a Lagrangian subspace of dimension $\frac{1}{2}\dim H^{2s+1}(L) := m$.

Suppose $(a_{1}, \dots, a_{m})$ is a basis of $V$ and complete it into a hyperbolic basis using theorem \ref{thm:hyperb_completion}. Apply then the Lagrangian truncation to $L$ and denote by $\cotrL{L}$ the homotopy cofiber of the map $\trL{L} \longrightarrow L$.

Let $(\Min{\cotrL{L}},d)$ be the Sullivan minimal model of this Lagrangian cotruncation and let $\varpi$ be the element of degree $4s+i-1$ in $(\Min{\cotrL{L}},d)$ representing the fundamental class $[L]^{\ast} \in H^{4s+i-1}(L)$.

\begin{propo}
Suppose $\varpi$ is an indecomposable element of $(\Min{\cotrL{L}},d)$, then there exists a map $\phi \in \pi_{4s+i-1}(\cotrL{L})\otimes \mathbf{Q}$ such that $\textsc{Hur}_{4s+i-1}([\phi]) = 1$.
\end{propo}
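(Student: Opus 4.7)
The plan is to exploit the classical duality in rational homotopy theory between the indecomposable elements of the minimal Sullivan model of a simply connected space and the dual of its rational homotopy groups, together with the compatibility of this duality with the Hurewicz map.

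More precisely, I would first recall that for a simply connected CW-complex $K$ with minimal Sullivan model $(\wedge V, d) = \Min{K}$, there is a natural isomorphism $V^n \cong \hom(\pi_n(K) \otimes \mathbf{Q}, \mathbf{Q})$ (see for instance \cite[Chapter~15]{Felix2001}). Moreover, when $\omega \in V^n$ is a cocycle representing a class $[\omega] \in H^n(K;\mathbf{Q})$, the resulting pairing is compatible with the Kronecker pairing via Hurewicz: for every $\alpha \in \pi_n(K) \otimes \mathbf{Q}$,
\[
\langle \omega, \alpha \rangle = \langle [\omega], \textsc{Hur}_n(\alpha) \rangle.
\]
I would apply this to $K = \cotrL{L}$ in degree $n = 4s+i-1$. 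Before doing so, one checks that $\cotrL{L}$ is simply connected: it is the homotopy cofiber of the map $\trL{L} \to L$ between simply connected CW-complexes, and $\trL{L}$ contains the $1$-skeleton of (the cellular model of) $L$ since the Lagrangian middle degree is $2s+(i-1)/2 \geq 2$.

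By hypothesis $\varpi \in V^{4s+i-1}$ is indecomposable and is a cocycle representative of $[L]^{\ast} \in H^{4s+i-1}(\cotrL{L};\mathbf{Q})$; being non-zero modulo decomposables, the associated functional on $\pi_{4s+i-1}(\cotrL{L}) \otimes \mathbf{Q}$ is non-zero. By non-degeneracy of the duality, I can pick $\phi \in \pi_{4s+i-1}(\cotrL{L}) \otimes \mathbf{Q}$ with $\langle \varpi, \phi \rangle = 1$. The compatibility with Hurewicz then yields
\[
\langle [L]^{\ast}, \textsc{Hur}_{4s+i-1}(\phi) \rangle = 1.
\]
Since $H_{4s+i-1}(\cotrL{L};\mathbf{Q}) = H_{4s+i-1}(L;\mathbf{Q}) = \mathbf{Q}\cdot[L]$ and $[L]^{\ast}$ is its dual basis element, this forces $\textsc{Hur}_{4s+i-1}(\phi) = 1$, as required.

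The main obstacle I expect is justifying the compatibility between the Sullivan duality pairing $V^n \otimes \pi_n \to \mathbf{Q}$ and the Kronecker pairing through Hurewicz. This is a classical but non-trivial ingredient of rational homotopy theory, essentially obtained by tracking the inductive construction of the minimal model from a Postnikov-type decomposition; once it is cited cleanly, the remainder of the argument is formal. A secondary subtlety is to make sure that $\varpi$ really is a cocycle in $(\Min{\cotrL{L}},d)$, not merely a cocycle modulo decomposables — but this is exactly what is meant by "$\varpi$ represents the fundamental class $[L]^{\ast}$" in the statement.
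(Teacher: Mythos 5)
Your proposal is correct and follows essentially the same route as the paper: both rest on the isomorphism $V^{4s+i-1}\cong\hom(\pi_{4s+i-1}(\cotrL{L})\otimes\mathbf{Q},\mathbf{Q})$ from \cite[Theorem 15.11]{Felix2001} together with the compatibility of this pairing with the Hurewicz map and the Kronecker pairing (which the paper makes explicit via the quotient map $\xi\col H^{+}(\wedge W)\to W$ and the identity $\langle \xi([\varpi]),[\varphi]\rangle=\lbrace H(m_{\cotrL{L}})[\varpi],\textsc{Hur}(\varphi)\rbrace$), followed by the same normalization to make the pairing equal to $1$. The only cosmetic differences are that you normalize $\phi$ directly rather than rescaling a preliminary $\varphi$, and that you explicitly verify simple connectivity of $\cotrL{L}$, which the paper leaves implicit.
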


\begin{proof}
Suppose $\varpi$ is an indecomposable element, that is $\varpi \in W^{4s+i-1}$ where $W = \oplus_{k \geq 0} W^{k}$ is the graded vector space generating $\Min{\cotrL{L}}$. Then by \cite[theorem 15.11]{Felix2001}, we have the natural isomorphism
\[
W^{4s+i-1} \overset{\cong}{\longrightarrow} \hom(\pi_{4s+i-1}(\cotrL{L}), \mathbf{Q}).
\]

That natural isomorphism is the same as saying that the bilinear pairing 
\[
\langle -,- \rangle : W \times \pi_{\ast}(\cotrL{L}) \longrightarrow \mathbf{Q}
\]
is non degenerate. So there is a map $\varphi \in \pi_{4s+i-1}(\cotrL{L})$ such that $\langle \varpi , [\varphi]\rangle \neq 0$.

Recall that 
\[
m_{\cotrL{L}} : (\wedge W, d) = (\Min{\cotrL{L}},d) \longrightarrow \Apl{\cotrL{L}}
\]
denotes the minimal Sullivan model of $\cotrL{L}$ and denote by 
\[
\textsc{Hur}_{k} \col \pi_{k}(\cotrL{L})\otimes \mathbf{Q} \longrightarrow H_{k}(\cotrL{L})
\]
the Hurewicz map. Since $\im{d} \subset \wedge^{\geq 2} W$, quotienting by $\wedge^{\geq 2} W$ defines a linear map $\xi \col H^{+}(\wedge W) \rightarrow W$, since $\varpi$ represents the fundamental class $[L]^{\ast} \in H^{4s+i-1}(L) = H^{4s+i-1}(\cotrL{L})$, we clearly have a element $[\varpi] \in H^{+}(\wedge W)^{4s+i-1}$ such that $\xi([\varpi]) = \varpi$.

Denote by $\lbrace -,- \rbrace$ the bilinear pairing between cohomology and homology defined by $\lbrace [f],[c] \rbrace := f(c)$, since we work on $\mathbf{Q}$ that pairing is also non degenerate. By the definition of the pairing $\langle -,- \rangle$ (see \cite[p 172-173]{Felix2001}) we have
\[
\langle \xi([\varpi]), [\varphi] \rangle = \lbrace H(m_{\cotrL{L}})[\varpi],\textsc{Hur}_{4s+i-1}(\varphi)\rbrace \neq 0.
\]

Since $H(m_{\cotrL{L}})[\varpi] = [L]^{\ast}$, we have $\textsc{Hur}_{4s+i-1}(\varphi)=q[L]$ with $q \in \mathbf{Q}-\lbrace 0 \rbrace$. Then 
\[
\phi:=\frac{1}{q}\varphi \in \pi_{4s+i-1}(\cotrL{L})\otimes \mathbf{Q}
\]
is the map we wanted.
\end{proof}

\begin{lem}\label{lem:indecomposable}
$\varpi$ is an indecomposable element of $(\Min{\cotrL{L}},d)$, that is 
\[
\varpi \in W^{4s+i-1}
\]
where $W = \oplus_{k \geq 0} W^{k}$ is the graded vector space generating $\Min{\cotrL{L}}$.
\end{lem}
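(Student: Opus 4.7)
The plan is to show that the cohomology class $[L]^{\ast} \in H^{N}(\cotrL{L})$, with $N := 4s+i-1 = \dim L$, is indecomposable in cohomology, i.e.\ does not lie in the image of the cup product $H^{+}(\cotrL{L}) \otimes H^{+}(\cotrL{L}) \to H^{N}(\cotrL{L})$. By minimality of $(\wedge W, d) = \Min{\cotrL{L}}$ we have $d(W) \subset \wedge^{\geq 2} W$, so the projection $\pi \col \wedge W \twoheadrightarrow W$ vanishes on boundaries and descends to a linear map $\xi \col H^{+}(\wedge W) \to W$ whose kernel is precisely the decomposable part of the cohomology. Hence non-decomposability of $[L]^{\ast}$ is equivalent to the existence of a cocycle representative $\varpi \in W^{N}$, which is the conclusion we want.

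Setting $\mu := N/2$, the computation of $H_{\ast}(\cotrL{L})$ recalled above gives $\tilde H^{r}(\cotrL{L}) = 0$ for $0 < r < \mu$. A degree-$N$ element of $\wedge^{\geq 2} W$ is a polynomial in indecomposables of strictly positive degree, and on cohomology the vanishing in low degrees forces every such product to factor through $H^{\mu}(\cotrL{L}) \otimes H^{\mu}(\cotrL{L})$. Consequently, the decomposable subspace of $H^{N}(\cotrL{L})$ coincides with the image of the cup product
\[
\cup \col H^{\mu}(\cotrL{L}) \otimes H^{\mu}(\cotrL{L}) \longrightarrow H^{N}(\cotrL{L}).
\]

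The main step is to prove that this cup product vanishes. The cofiber sequence $\trL{L} \to L \xrightarrow{f^{L}} \cotrL{L}$ induces a long exact sequence in cohomology; combined with the description $H_{\mu}(\cotrL{L}) = \overline{V}$ and the fact that $f^{\ast} \col H^{\mu}(L) \to H^{\mu}(\trL{L})$ is (dually to the inclusion $\overline{V^{\ast}} \hookrightarrow \overline{V} \oplus \overline{V^{\ast}}$) the projection onto the Lagrangian summand $V$, a short diagram chase shows that $(f^{L})^{\ast} \col H^{\mu}(\cotrL{L}) \hookrightarrow H^{\mu}(L)$ is injective with image the complementary Lagrangian $V^{\ast}$. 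Because $V^{\ast}$ is isotropic with respect to the intersection form $b$ of $L$, for any $x, y \in H^{\mu}(\cotrL{L})$ we have
\[
(f^{L})^{\ast}(x \cup y) = (f^{L})^{\ast}(x) \cup (f^{L})^{\ast}(y) = 0 \in H^{N}(L).
\]
Since $(f^{L})^{\ast} \col H^{N}(\cotrL{L}) \to H^{N}(L)$ is an isomorphism (both groups are $\mathbf{Q}$, generated by the fundamental class), we conclude $x \cup y = 0$ in $H^{N}(\cotrL{L})$. Therefore the decomposable part of $H^{N}(\cotrL{L})$ is trivial, the class $[L]^{\ast}$ is non-decomposable, and a representative can be chosen in $W^{N}$.

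The subtlest point in carrying this out is the identification of the image of $(f^{L})^{\ast}$ in middle cohomology as a Lagrangian subspace; once the long exact sequence is unravelled this follows from the construction of the Lagrangian truncation, but the bookkeeping between $V$, $V^{\ast}$ and their Poincaré duals $\overline V$, $\overline{V^{\ast}}$ must be handled carefully and works uniformly in both cases $i=1$ and $i=3$, since in each case the form $b$ admits a Lagrangian on which cup products vanish.
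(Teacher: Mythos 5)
Your argument is correct and follows essentially the same route as the paper's: since $\cotrL{L}$ is highly connected, the only decomposables that can appear in the top degree $4s+i-1$ are products of two middle-degree classes, and these all vanish because the middle cohomology of the Lagrangian cotruncation is identified, via pullback to $L$, with an isotropic subspace for the intersection form. You merely make explicit, through the cofibre sequence and the injectivity of $H^{4s+i-1}(\cotrL{L}) \to H^{4s+i-1}(L)$, the step the paper compresses into ``the elements of middle degree come from the Lagrangian, so their products are exact.''
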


\begin{proof}
The elements of degree $2s+\frac{i-1}{2}$ of $(\Min{\cotrL{L}},d)$ come from the Lagrangian $V$ so for all elements $x,y \in  \Min{\cotrL{L}}^{2n+\frac{i-1}{2}}$ there exists an element $z \in \Min{\cotrL{L}}^{4n+i-2}$ such that $dz = x \cdot y$, in particular, none of these products are equal to $\varpi$. For degree reasons these were the only elements we had to care about.
\end{proof}

Denote by $\phi$ the element of $\pi_{4s+i-1}(\cotrL{L})\otimes \mathbf{Q}$ obtained by this process, like in the general case, and consider the homotopy pushout.
\[
\begin{tikzpicture}[injection/.style={right hook->,fill=white, inner sep=2pt}]

\matrix (m)[matrix of math nodes, row sep=3em, column sep=5em, text height=1.5ex, text depth=0.25ex]
{ S^{4s+i-1}                &     D^{4s+i}          \\
\cotrL{L}                   &     \cotr{\overline{\mathcal{L}},\phi}{L}   \\};

\path[injection]
(m-1-1) edge node[auto] {} (m-1-2);
\path[->]
(m-2-1) edge node[auto] {} (m-2-2);
\path[->]
(m-1-1) edge node[auto,swap] {$\phi$} (m-2-1);
\path[->]
(m-1-2) edge node[left=2.5em] {HPO} (m-2-2);
\end{tikzpicture}
\]

\begin{propo}
$(\cotr{\overline{\mathcal{L}},\phi}{L},L)$ is a Poincaré duality pair if and only if $\phi_{4s+i-1}$ is an isomorphism.
\end{propo}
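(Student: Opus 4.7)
The proof is the direct odd-dimensional analogue of Lemma \ref{lem:condition_PDpair}; I would imitate its structure line for line, with $n := 4s+i$ replacing $2s$. The geometric setup is identical: $\cotr{\overline{\mathcal{L}},\phi}{L}$ is obtained from $\cotrL{L}$ by attaching a single $n$-cell along $\phi \colon S^{n-1} \to \cotrL{L}$, so the relative group $H_n(\cotr{\overline{\mathcal{L}},\phi}{L}, L)$ is one-dimensional rationally, generated by the top cell, and the connecting homomorphism of the pair coincides by construction with the map called $\phi_{n-1}$.

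For the forward direction I would assume $(\cotr{\overline{\mathcal{L}},\phi}{L}, L)$ is an oriented Poincaré duality pair of dimension $n$ with orientation class $[e_\phi] \in H_n(\cotr{\overline{\mathcal{L}},\phi}{L}, L)$. By the definition recalled before Theorem \ref{thm:glueing}, the boundary of $[e_\phi]$ must be the fundamental class $[L] \in H_{n-1}(L) \cong \mathbf{Q}$. Since this boundary is $\phi_{n-1}$, and since both source and target are one-dimensional over $\mathbf{Q}$, the map $\phi_{n-1}$ must be an isomorphism.

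For the reverse direction I would assume $\phi_{n-1}$ is an isomorphism, set $[e_\phi] := \phi_{n-1}^{-1}([L])$, and verify that capping with $[e_\phi]$ yields an isomorphism $H^r(\cotr{\overline{\mathcal{L}},\phi}{L}) \to H_{n-r}(\cotr{\overline{\mathcal{L}},\phi}{L}, L)$ for every $r$. As in the even case, this is checked by a diagram chase: for each $r$ one considers the square whose top row is $-\cap [e_\phi]$, whose bottom row is Poincaré duality $-\cap [L]$ for the closed oriented manifold $L$, whose left vertical is $\mathrm{incl}^\ast$, and whose right vertical is the connecting map $\partial_{n-r}$. The square commutes by naturality of the cap product together with the relation $\partial [e_\phi] = [L]$, and it extends to a ladder comparing the long cohomology exact sequence of the pair with the long homology exact sequence. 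The five lemma then forces the top cap-product map to be an isomorphism.

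The only genuinely new ingredient compared with the even case is a purely bookkeeping one: one has to know that $H_n(\cotr{\overline{\mathcal{L}},\phi}{L}, L) \cong \mathbf{Q}$, which follows from the long exact sequence of the pair applied to the cofibre data used to define $\cotr{\overline{\mathcal{L}},\phi}{L}$. I expect the main technical point, as in the even case, to be checking commutativity of the ladder — this rests on the standard naturality of the cap product with respect to the inclusion $L \hookrightarrow \cotr{\overline{\mathcal{L}},\phi}{L}$ and on $\phi_{n-1}([e_\phi]) = [L]$. Once that diagram is set up, the conclusion is delivered immediately by the five lemma, and no new idea beyond what already appears in Lemma \ref{lem:condition_PDpair} is required.
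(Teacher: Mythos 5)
Your strategy --- transplanting the proof of Lemma \ref{lem:condition_PDpair} with $n=4s+i$ in place of $2s$ --- is exactly what the paper intends, since it states this proposition without a separate proof; the forward direction and the identification $H_{n}(\cotr{\overline{\mathcal{L}},\phi}{L},L)\cong\mathbf{Q}$ are fine. The reverse direction, however, has a gap at the step where you claim the five lemma ``forces'' the conclusion. In the Poincar\'e--Lefschetz ladder the vertical arrows alternate between three families: $H^{r}(\cotr{\overline{\mathcal{L}},\phi}{L})\to H_{n-r}(\cotr{\overline{\mathcal{L}},\phi}{L},L)$, $H^{r}(L)\to H_{n-1-r}(L)$, and $H^{r}(\cotr{\overline{\mathcal{L}},\phi}{L},L)\to H_{n-r}(\cotr{\overline{\mathcal{L}},\phi}{L})$. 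Duality for the closed manifold $L$ accounts only for the middle family, so the five lemma can trade one of the remaining two families for the other but cannot produce either from scratch; invoking it here is circular. As in the even case one must actually compute both sides degree by degree (this is what the paper's earlier proof waves at as ``straightforward calculations'').

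The degree where this computation is not a formality is the middle one, and this is precisely where your claim that ``no new idea beyond Lemma \ref{lem:condition_PDpair} is required'' goes wrong. Set $c=2s+\tfrac{i-1}{2}$, so $n-c=c+1$. The long exact sequence of the pair identifies $H_{c+1}(\cotr{\overline{\mathcal{L}},\phi}{L},L)$ with the kernel of $H_{c}(L)=\overline{V}\oplus\overline{V^{\ast}}\to H_{c}(\cotrL{L})=\overline{V}$, namely $\overline{V^{\ast}}$, while $H^{c}(\cotr{\overline{\mathcal{L}},\phi}{L})$ injects into $H^{c}(L)$ with image the annihilator of $\overline{V^{\ast}}$ under the evaluation pairing, i.e.\ the orthogonal complement of $V^{\ast}$ with respect to $b_{L}$. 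The cap product with $[e_{\phi}]$ between these two spaces is then computed by $b_{L}$, and the statement that it is an isomorphism is exactly the statement that $V^{\ast}$ is isotropic of half dimension and pairs perfectly with its complement --- the content of the hyperbolic basis of Theorem \ref{thm:hyperb_completion}. In the even-dimensional lemma the cotruncation vanishes in every degree strictly between $0$ and $k$, so no such middle-degree pairing ever arises; here it is the heart of the proposition, and it is the only place the Lagrangian hypothesis is actually used. Your proof should isolate and verify this degree explicitly rather than fold it into a generic diagram chase.
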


We denote by $\DP{X}$ the space obtained by the glueing of the two Poincaré duality pairs $(X_{reg},\partial X_{reg} ) = (X_{reg}, L)$ and $(\cotrL{L}_{\phi},L)$ following the theorem \ref{thm:glueing}. We have the last part of the theorem \ref{thm:main_thm_one_sing}.

\begin{propo}
\label{propo:2k+1_L_space}
Let $X$ be a compact, connected oriented normal pseudomanifold of dimension $n=4s+1$ or $n=4s+3$ with one isolated singularity $\sigma$ of simply connected link $L$. Suppose moreover that $X$ is an L-space. Then there exists a rational Poincaré approximation $\DP{X}$ of $X$.
\end{propo}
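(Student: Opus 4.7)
The plan is to mimic the construction of the even dimensional case (Subsection \ref{subsubsec:2k_dim_case}) and of the Witt odd case (Proposition \ref{propo:2k+1_Witt_case}), substituting the Lagrangian cotruncation $\cotrL{L}$ for the classical middle cotruncation. Let $s=(n-1)/2$ denote the middle degree of the link. First, I would pick an attaching map $\phi \in \pi_{n-1}(\cotrL{L})\otimes \mathbf{Q}$ with $\textsc{Hur}_{n-1}([\phi])=[L]$; existence is guaranteed by Lemma \ref{lem:indecomposable} together with the proposition preceding it. The proposition immediately above then makes $(\cotr{\overline{\mathcal{L}},\phi}{L},L)$ a rational Poincaré duality pair of dimension $n$. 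Since $(X_{reg},L)$ is itself a Poincaré--Lefschetz pair of the same dimension and with the same oriented boundary $L$, I would invoke the glueing theorem \ref{thm:glueing} to produce a closed oriented rational Poincaré complex
\[
\DP{X} := X_{reg} \cup_{L} \cotr{\overline{\mathcal{L}},\phi}{L}
\]
of dimension $n$ together with a fundamental class. This verifies the first requirement of Definition \ref{def:Q_Poinc_approx}.

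For the factorization requirement, I would take the canonical inclusion into the pushout as the first map, and produce the second by extending the identity on $X_{reg}$ via any continuous map $\cotr{\overline{\mathcal{L}},\phi}{L} \rightarrow cL$ agreeing on $L$ (such a map exists because $cL$ is contractible). The composite is then the inclusion $X_{reg}\hookrightarrow \overline{X}$ up to rational homotopy. To check the homological conditions, I would use the homology of $\cotrL{L}$ given in (\ref{eqn:lag_troncation}) together with the effect of attaching an $n$-cell along $[L]$: a direct long exact sequence calculation shows that $H_{\ast}(\cotr{\overline{\mathcal{L}},\phi}{L},L)$ vanishes outside degrees $s+1$ and $n$, where it equals $\overline{V^{\ast}}$ and $\mathbf{Q}$ respectively. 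Plugging this into the long exact sequence of the pair $(\DP{X},X_{reg})$, and comparing with the analogous sequence for $(\overline{X},X_{reg})$ (whose relative homology comes from the suspension $\Sigma L$), delivers exactly the iso/injection/surjection pattern demanded by Definition \ref{def:Q_Poinc_approx}.

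The main obstacle lies in the middle-degree bookkeeping. Because Lagrangian truncation only kills the Lagrangian half of $H^{s}(L)$, one has $H_{s}(\cotrL{L})=\overline{V}\neq 0$; this is precisely what forces the comparison map $H_{s+1}(X_{reg})\rightarrow H_{s+1}(\DP{X})$ to be only an injection with cokernel $\overline{V^{\ast}}$ and the comparison map $H_{s}(\DP{X})\rightarrow H_{s}(\overline{X})$ to be only a surjection, matching the ``good'' (but not ``very good'') criteria of Definition \ref{def:Q_Poinc_approx}. A secondary verification is that $\phi$ can indeed be chosen so that the connecting morphism $H_{n}(\cotr{\overline{\mathcal{L}},\phi}{L},L)\rightarrow H_{n-1}(L)$ carries the chosen orientation class of the pair to $[L]$, but this follows immediately from the Hurewicz condition $\textsc{Hur}_{n-1}([\phi])=[L]$ used when selecting $\phi$.
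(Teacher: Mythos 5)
Your construction of $\DP{X}$ is exactly the paper's: you obtain the attaching map $\phi \in \pi_{n-1}(\cotrL{L})\otimes\mathbf{Q}$ with $\textsc{Hur}_{n-1}([\phi])=[L]$ from the indecomposability of $\varpi$ (Lemma \ref{lem:indecomposable} and the proposition preceding it), you conclude that $(\cotr{\overline{\mathcal{L}},\phi}{L},L)$ is a Poincaré duality pair, and you glue it to $(X_{reg},L)$ by Theorem \ref{thm:glueing}; the factorization through $\overline{X}$ via a map to the cone is also the intended one. So the construction itself is sound and matches the paper.

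However, the homological bookkeeping you offer to verify Definition \ref{def:Q_Poinc_approx} contains a concrete error. Write $A:=\cotr{\overline{\mathcal{L}},\phi}{L}$ and let $s=(n-1)/2$. You claim $H_{\ast}(A,L)$ vanishes outside degrees $s+1$ and $n$. But since $H_{r}(A)=H_{r}(\cotrL{L})=0$ for $1\le r<s$, the long exact sequence of the pair gives $H_{r}(A,L)\cong \widetilde{H}_{r-1}(L)$ for all $2\le r\le s$, which is nonzero whenever $L$ has homology strictly between degrees $1$ and $s$ (e.g.\ $L=(S^{2}\times S^{4})\sharp(S^{3}\times S^{3})$ for $n=7$). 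This is not a harmless slip: if your vanishing claim were true, the long exact sequence of $(\DP{X},X_{reg})$ would force $H_{r}(X_{reg})\cong H_{r}(\DP{X})$ for $r<s$, so $\psi_{r}$ would reduce to the inclusion-induced map $H_{r}(X_{reg})\to H_{r}(\overline{X})$, which is in general neither injective nor surjective because $H_{r}(\overline{X},X_{reg})\cong H_{r}(cL,L)\cong\widetilde{H}_{r-1}(L)\neq 0$; condition (2) of Definition \ref{def:Q_Poinc_approx} for $\psi_{r}$, $r<s$, would then fail. The correct argument is the opposite one: the nonvanishing groups $H_{r}(A,L)\cong\widetilde{H}_{r-1}(L)$, $2\le r\le s$, are carried isomorphically (by naturality of the connecting homomorphism) onto $H_{r}(\overline{X},X_{reg})\cong H_{r}(cL,L)$, and the five lemma applied to the map of long exact sequences of the pairs $(\DP{X},X_{reg})\to(\overline{X},X_{reg})$ is what makes $\psi_{r}$ an isomorphism for $r<s$ and a surjection for $r=s$. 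The contributions $\overline{V^{\ast}}$ in degree $s+1$ and $\mathbf{Q}$ in degree $n$ then account for the behaviour of $\phi_{r}$ in the upper range exactly as you describe, so your final iso/injection/surjection pattern is right, but the computation you give to justify it is not.
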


Just like in the even dimensional case with the proposition \ref{propo:equiv_IX_IXphi}. We can relate the spaces $\DP{X}$ to the intersection and Lagrangian intersections spaces, and get more precision on how to attach the top cell in the simply connected case. When $X$ is a Witt space, we denote by $IX = \I{m}{X} = \I{n}{X}$.

\begin{propo}
\label{propo:equiv_IX_IXphi_2s+1}
Let $X$ be a compact, connected oriented normal pseudomanifold of dimension $n=2s+1$ with one isolated singularity of link $L$ simply connected. 
\begin{enumerate}
\item Suppose $X$ is a Witt space. The space $\DP{X}$ is then rationally homotopy equivalent to $IX \cup e^{2s+1}$. If moreover $H^{1}(X_{reg})=0$, then $e^{2s+1}$ is attached by ordinary Whitehead products (not iterated) with respect to some basis of $\pi_{\ast}(IX) \otimes \mathbf{Q}$ and the rational homotopy type of $\DP{X}$ is determined by $IX$.
\item Suppose $X$ is an L-space. The space $\DP{X}$ is then rationally homotopy equivalent to $\nI{\mathcal{L}}{X} \cup e^{2s+1}$. If moreover $H^{1}(X_{reg})=0$, then $e^{2s+1}$ is attached by ordinary Whitehead products (not iterated) with respect to some basis of $\pi_{\ast}(\nI{\mathcal{L}}{X}) \otimes \mathbf{Q}$ and the rational homotopy type of $\DP{X}$ is determined by $\nI{\mathcal{L}}{X}$.
\end{enumerate}
\end{propo}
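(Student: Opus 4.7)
The plan is to follow verbatim the argument of Proposition~\ref{propo:equiv_IX_IXphi}, in each case replacing $\cotr{}{L}$ by the appropriate cotruncation. In case (1), set $C := \cotr{k(\overline{n})}{L}$; under the Witt hypothesis (see the proof of Proposition~\ref{propo:2k+1_Witt_case}) this coincides with $\cotr{k(\overline{m})}{L}$, so that $IX$ is, by Definition~\ref{def:intersectionspace}, the homotopy pushout of $C \leftarrow L \hookrightarrow X_{reg}$. In case (2), set $C := \cotrL{L}$, so that by Definition~\ref{def:ILX_hpo} the Lagrangian intersection space $\nI{\mathcal{L}}{X}$ is the homotopy pushout of the same span $C \leftarrow L \hookrightarrow X_{reg}$. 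In both cases the Poincaré pair $(C_{\phi}, L)$ glued to $(X_{reg}, L)$ in the construction of $\DP{X}$ is itself the homotopy pushout of $C \overset{\phi}{\longleftarrow} S^{2s} \hookrightarrow D^{2s+1}$, with $\phi$ produced in Proposition~\ref{propo:2k+1_Witt_case} or in Proposition~\ref{propo:2k+1_L_space} respectively.

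With these identifications in hand, I would reproduce the $3\times 3$ diagram of Proposition~\ref{propo:equiv_IX_IXphi}, replacing the bottom sphere--disk pair by $(S^{2s}, D^{2s+1})$ and writing $Y$ uniformly for $IX$ (case 1) or $\nI{\mathcal{L}}{X}$ (case 2). The square $(I)$, with corners $L, X_{reg}, C, Y$, is a homotopy pushout by the definition of $Y$; the square $(III)$ is a homotopy pushout by construction of $C_{\phi}$; and rewriting $\DP{X} = X_{reg}\cup_{L}C_{\phi}$ as the iterated pushout $Y \cup_{C} C_{\phi}$ shows that the middle square $(II)$, with corners $C, C_{\phi}, Y, \DP{X}$, is also a homotopy pushout. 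Pasting squares $(II)$ and $(III)$ yields the outer homotopy pushout exhibiting $\DP{X} \simeq Y \cup_{\phi} e^{2s+1}$.

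Under the extra assumption $H^{1}(X_{reg}) = 0$, I would next check that $\DP{X}$ is simply connected: both $L$ and $C$ are simply connected ($L$ by hypothesis, $C$ as the homotopy cofiber of a map between simply connected spaces), so van Kampen applied to the defining pushout of $Y$ and then to $\DP{X} = Y \cup e^{2s+1}$ gives $\pi_{1}(\DP{X}) = \pi_{1}(X_{reg})$, rationally trivial by the cohomological assumption. The Mayer--Vietoris sequence of the pushout defining $Y$ (equivalently the rational model of Proposition~\ref{propo:rat_model_IpX}) then gives $H^{2s+1}(Y) = 0$: $X_{reg}$ has no top cohomology since it is a compact manifold with non-empty boundary, $C$ has none either by construction of the cotruncation, and the homology long exact sequence of the cofiber defining $C$ shows that $H_{2s}(L) \to H_{2s}(C)$ is an isomorphism, which dually makes $H^{2s}(C) \to H^{2s}(L)$ surjective. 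The hypotheses of Theorem~\ref{thm:stasheff} are therefore met with $H := H^{\ast}(\DP{X})$, and one concludes both that the attaching map of $e^{2s+1}$ is a sum of ordinary (non iterated) Whitehead products with respect to some basis of $\pi_{\ast}(Y)\otimes \mathbf{Q}$, and that the rational homotopy type of $\DP{X}$ depends only on that of $Y$.

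The main obstacle is the vanishing $H^{2s+1}(Y) = 0$, which is what licences the appeal to Stasheff's theorem; the argument above is short but has to be run separately using the explicit homology of $\cotr{k(\overline{n})}{L}$ in the Witt case and of $\cotrL{L}$ in the L-space case.
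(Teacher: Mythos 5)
Your proposal is correct and follows essentially the same route as the paper: the author's proof of this proposition consists precisely of rerunning the homotopy--pushout pasting argument of Proposition~\ref{propo:equiv_IX_IXphi} with the two displayed diagrams (cotruncation in the Witt case, Lagrangian cotruncation in the L-space case) and then invoking Theorem~\ref{thm:stasheff}. Your explicit Mayer--Vietoris verification that $H^{2s+1}$ of the intersection space vanishes, needed to apply Stasheff's theorem, is a detail the paper leaves implicit, but it is not a different approach.
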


\begin{proof}
The proof is the same as for the proposition \ref{propo:equiv_IX_IXphi} unless we consider the following diagram when $X$ is a Witt space.
\[
\begin{tikzpicture}[injection/.style={right hook->,fill=white, inner sep=2pt}]

\matrix (m)[matrix of math nodes, row sep=2em, column sep=5em, text height=1.5ex, text depth=0.25ex]
{             & L=\partial X_{reg} & X_{reg}         \\
S^{2s}        & \cotr{}{L}         & IX        \\
D^{2s+1}      & \cotr{\phi}{L}     & \DP{X} \\};

\path[->]
(m-1-2) edge node[style] {} (m-2-2);
\path[->]
(m-1-3) edge node[left=2em] {(I) HPO} (m-2-3);
\path[->]
(m-2-1) edge node[auto,swap] {$i_{2}$} (m-3-1);
\path[->]
(m-2-2) edge node[left=2em] {(III) HPO} (m-3-2);
\path[->]
(m-2-3) edge node[left=2.5em] {(II)} (m-3-3);

\path[injection]
(m-1-2) edge node[auto] {$i_{1}$} (m-1-3);
\path[->]
(m-2-1) edge node[auto]{$\phi$} (m-2-2);
\path[->]
(m-2-2) edge node[style] {} (m-2-3);
\path[->]
(m-3-1) edge node[style] {} (m-3-2);
\path[->]
(m-3-2) edge node[style] {} (m-3-3);
\end{tikzpicture}
\]

and the following diagram when $X$ is an L-space.
\[
\begin{tikzpicture}[injection/.style={right hook->,fill=white, inner sep=2pt}]

\matrix (m)[matrix of math nodes, row sep=2em, column sep=5em, text height=1.5ex, text depth=0.25ex]
{             & L=\partial X_{reg}                        & X_{reg}         \\
S^{2s}        & \cotr{\overline{\mathcal{L}}}{L}          & \nI{\mathcal{L}}{X}        \\
D^{2s+1}      & \cotr{\overline{\mathcal{L}},\phi}{L}     & \DP{X} \\};

\path[->]
(m-1-2) edge node[style] {} (m-2-2);
\path[->]
(m-1-3) edge node[left=2em] {(I) HPO} (m-2-3);
\path[->]
(m-2-1) edge node[auto,swap] {$i_{2}$} (m-3-1);
\path[->]
(m-2-2) edge node[left=2em] {(III) HPO} (m-3-2);
\path[->]
(m-2-3) edge node[left=2.5em] {(II)} (m-3-3);

\path[injection]
(m-1-2) edge node[auto] {$i_{1}$} (m-1-3);
\path[->]
(m-2-1) edge node[auto]{$\phi$} (m-2-2);
\path[->]
(m-2-2) edge node[style] {} (m-2-3);
\path[->]
(m-3-1) edge node[style] {} (m-3-2);
\path[->]
(m-3-2) edge node[style] {} (m-3-3);
\end{tikzpicture}
\]
\end{proof}

The considerations whether the approximations are good or very good come from the study of the rational homology of these diagrams of homotopy pushouts
\[
\begin{tikzpicture}[injection/.style={right hook->,fill=white, inner sep=2pt}]

\matrix (m)[matrix of math nodes, row sep=3em, column sep=5em, text height=1.5ex, text depth=0.25ex]
{ L                &     X_{reg}          \\
\cotr{\phi}{L}     &     \DP{X}     \\
   \ast            &     X          \\};
  
\path[injection]
(m-1-1) edge node[auto] {} (m-1-2);
\path[->]
(m-2-1) edge node[auto] {} (m-2-2);
\path[->]
(m-3-1) edge node[auto] {} (m-3-2);

\path[->]
(m-1-1) edge node[auto] {} (m-2-1);
\path[->]
(m-2-1) edge node[auto] {} (m-3-1);

\path[->]
(m-1-2) edge node[auto,swap] {$\phi$} (m-2-2);
\path[->]
(m-2-2) edge node[auto,swap] {$\psi$} (m-3-2);
\path[injection,bend left]
(m-1-2) edge node[auto] {$i$} (m-3-2);
\end{tikzpicture}
\quad
\begin{tikzpicture}[injection/.style={right hook->,fill=white, inner sep=2pt}]

\matrix (m)[matrix of math nodes, row sep=3em, column sep=5em, text height=1.5ex, text depth=0.25ex]
{ L                                      &     X_{reg}          \\
\cotr{\overline{\mathcal{L}},\phi}{L}    &     \DP{X}     \\
   \ast                                  &     X          \\};
  
\path[injection]
(m-1-1) edge node[auto] {} (m-1-2);
\path[->]
(m-2-1) edge node[auto] {} (m-2-2);
\path[->]
(m-3-1) edge node[auto] {} (m-3-2);

\path[->]
(m-1-1) edge node[auto] {} (m-2-1);
\path[->]
(m-2-1) edge node[auto] {} (m-3-1);

\path[->]
(m-1-2) edge node[auto,swap] {$\phi$} (m-2-2);
\path[->]
(m-2-2) edge node[auto,swap] {$\psi$} (m-3-2);
\path[injection,bend left]
(m-1-2) edge node[auto] {$i$} (m-3-2);
\end{tikzpicture}
\]
for the even dimensional case and the Witt space case, or for the L-space case.

\subsection{The multiple isolated singularities case}
\label{subsubsec:multi_sing}

The theorem \ref{thm:glueing} did not make any assumptions on the connectivity of the pairs $(X_{j}, Y_{j}, [x_{j}])$, so in fact we can apply everything that was above to the case of a pseudomanifold with more than one isolated singularity.
\begin{thm}[Multiple isolated singularities case]
Let $X$ be a compact, connected oriented normal pseudomanifold of dimension $n$ with only isolated singularities $\Sigma = \lbrace \sigma_{1}, \dots, \sigma_{\nu}\rbrace$, $\nu >1$, of links $L_{i}$ simply connected. Then, 
\begin{enumerate}
\item If $n = 2s$, there exists a good rational Poincaré approximation $\DP{X}$ of $X$. Moreover, if $\dim X \equiv 0 \mod 4$, the Witt class associated to the intersection form $b_{\DP{X}}$ is the same as the Witt class associated to the middle intersection cohomology of $X$ in $W(\mathbf{Q})$.
\item If $n= 2s+1$ and $X$ is either a Witt space or an L-space there exists a good rational Poincaré approximation $\DP{X}$ of $X$. Moreover is $X$ is Witt space $\DP{X}$ is a very good rational Poincaré approximation of $X$
\end{enumerate}
\end{thm}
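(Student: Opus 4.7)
The strategy is to reduce the statement to the single-singularity results of Section \ref{subsec:one_isol_sing} by running the construction independently at each singularity, and then to appeal to the fact that the glueing theorem \ref{thm:glueing} makes no connectivity assumption on the pieces. Concretely, for each $\sigma_i$ one chooses the truncation appropriate to the dimension and type: the middle homological cotruncation $\cotr{}{L_i}$ when $n = 2s$; the same cotruncation when $n = 2s+1$ and $X$ is Witt (which is then $(s-1)$-connected by Proposition \ref{propo:2k+1_Witt_case}); and the Lagrangian cotruncation $\cotrL{L_i}$ when $n = 2s+1$ and $X$ is an L-space. The dimensional and connectivity arguments of the unique singularity case (rational Hurewicz, or the indecomposability Lemma \ref{lem:indecomposable} followed by the duality between $W^{n-1}$ and $\pi_{n-1}$) then produce, for each $i$, a map $\phi_i \col S^{n-1} \to \cotr{}{L_i}$ (or $\cotrL{L_i}$) whose image under the Hurewicz homomorphism is the fundamental class $[L_i]$. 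By Lemma \ref{lem:condition_PDpair} this yields Poincaré pairs $(C_i, L_i)$ of dimension $n$.

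One then assembles $(Z, \partial Z) := \bigsqcup_{i=1}^{\nu}(C_i, L_i)$, which is a Poincaré pair of dimension $n$ with $\partial Z = L(\Sigma, X) = \partial X_{reg}$ (up to orientation). Since $(X_{reg}, \partial X_{reg})$ is itself a Poincaré-Lefschetz pair of the same dimension with the same oriented boundary, Theorem \ref{thm:glueing} applied simultaneously to all $\nu+1$ pieces produces a rational Poincaré complex
\[
\DP{X} := X_{reg} \cup_{L(\Sigma,X)} Z
\]
of dimension $n$ without boundary. The factorization $X_{reg} \overset{\phi}{\to} \DP{X} \overset{\psi}{\to} \overline{X}$ is induced by the inclusion into the pushout for $\phi$, and for $\psi$ by the map $Z \to \bigsqcup_i cL_i$ that collapses each $C_i$ to the cone on its boundary, using that the normalization decomposes as $\overline{X} = X_{reg} \cup_{\partial X_{reg}} \bigsqcup_i cL_i$.

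Verifying the good (and, where claimed, very good) approximation axioms of Definition \ref{def:Q_Poinc_approx} is the main technical obstacle and is carried out by comparing the Mayer-Vietoris sequences of the two pushouts above, plugging in the homology formulas \eqref{eqn:ntroncation} and \eqref{eqn:lag_troncation} for the individual cotruncations; the computations are the disjoint-union aggregate of those of the single-singularity case, so the required isomorphism and injection/surjection pattern for $\phi_r$ and $\psi_r$ follows by the same diagram chase as for $\nu=1$. For the Witt class statement in dimension $4s$, each pair $(C_i, L_i)$ carries the zero intersection form on its middle homology (the latter being entirely in the image from the boundary), so Lemma \ref{lem:novikov} applied iteratively gives $[b_{\DP{X}}] = [b_{X_{reg}}]$ in $W(\mathbf{Q})$, and this last class equals the middle intersection cohomology Witt class by \cite[Theorem 2.28]{Banagl2010}, exactly as in Section \ref{subsubsec:2k_dim_case}. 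The core of the work is therefore already contained in the single-singularity arguments; the novelty here is only bookkeeping across the disjoint union of links.
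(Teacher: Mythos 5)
Your proposal follows essentially the same route as the paper: perform the single-singularity truncation (homological, Witt, or Lagrangian as appropriate) on each link to get Poincaré pairs $(\cotr{\phi_{i}}{L_{i}}, L_{i})$, glue their disjoint union to $(X_{reg},\partial X_{reg})$ via Theorem \ref{thm:glueing} (which imposes no connectivity hypothesis), read off the good/very good approximation conditions from the homology of the resulting homotopy pushout diagrams, and obtain the Witt class statement from Lemma \ref{lem:novikov} together with \cite[Theorem 2.28]{Banagl2010}. The only content of the paper's treatment you omit is the comparison of $\DP{X}$ with the truncated normal intersection space $\tr{2s-1}{\mathcal{I}X}\cup e^{2s}$, which is not needed for the theorem as stated.
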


Just like before, the considerations whether the approximations are good or very good come from the study of the rational homology of these diagrams of homotopy pushouts
\[
\begin{tikzpicture}[injection/.style={right hook->,fill=white, inner sep=2pt}]

\matrix (m)[matrix of math nodes, row sep=3em, column sep=5em, text height=1.5ex, text depth=0.25ex]
{ \bigsqcup_{\sigma_{i}} L_{i}                     &     X_{reg}          \\
  \bigsqcup_{\sigma_{i}} \cotr{\phi_{i}}{L_{i}}    &     \DP{X}     \\
  \bigsqcup_{\sigma_{i}} \ast                      &     X          \\};
  
\path[injection]
(m-1-1) edge node[auto] {} (m-1-2);
\path[->]
(m-2-1) edge node[auto] {} (m-2-2);
\path[->]
(m-3-1) edge node[auto] {} (m-3-2);

\path[->]
(m-1-1) edge node[auto] {} (m-2-1);
\path[->]
(m-2-1) edge node[auto] {} (m-3-1);

\path[->]
(m-1-2) edge node[auto,swap] {$\phi$} (m-2-2);
\path[->]
(m-2-2) edge node[auto,swap] {$\psi$} (m-3-2);
\path[injection,bend left]
(m-1-2) edge node[auto] {$i$} (m-3-2);
\end{tikzpicture}
\quad
\begin{tikzpicture}[injection/.style={right hook->,fill=white, inner sep=2pt}]

\matrix (m)[matrix of math nodes, row sep=3em, column sep=5em, text height=1.5ex, text depth=0.25ex]
{ \bigsqcup_{\sigma_{i}} L_{i}                                      &     X_{reg}         \\
  \bigsqcup_{\sigma_{i}} \cotr{\overline{\mathcal{L}},\phi_{i}}{L}  &     \DP{X}     \\
  \bigsqcup_{\sigma} \ast                                           &     X          \\};
  
\path[injection]
(m-1-1) edge node[auto] {} (m-1-2);
\path[->]
(m-2-1) edge node[auto] {} (m-2-2);
\path[->]
(m-3-1) edge node[auto] {} (m-3-2);

\path[->]
(m-1-1) edge node[auto] {} (m-2-1);
\path[->]
(m-2-1) edge node[auto] {} (m-3-1);

\path[->]
(m-1-2) edge node[auto,swap] {$\phi$} (m-2-2);
\path[->]
(m-2-2) edge node[auto,swap] {$\psi$} (m-3-2);
\path[injection,bend left]
(m-1-2) edge node[auto] {$i$} (m-3-2);
\end{tikzpicture}
\]
for the even dimensional case and the Witt space case, or for the L-space case.

\subsubsection{The even dimensional case}

Let $X$ be a compact oriented normal pseudomanifold of dimension $n=2s$ with only isolated singularities $\Sigma= \lbrace \sigma_{1}, \dots, \sigma_{\nu} ; \nu >1\rbrace$ of simply connected links $L_{i}$.

The rational Hurewicz theorem gives us maps $\phi_{i}$ such that the pairs $(\cotr{\phi_{i}}{L_{i}}, L_{i})$ are Poincaré duality pairs for all $i$. Denote by $[e_{\phi}]_{i}$ the induced orientation class in $H_{2s}(\cotr{\phi_{i}}{L_{i}}, L_{i})$. 

The pair $(X_{reg}, \partial X_{reg})$ with $\partial X_{reg} = \sqcup_{\sigma_{i}} L_{i}$ is still a manifold with boundary, thus satisfies Poincaré-Lefschetz duality and is a Poincaré duality pair. The theorem \ref{thm:glueing} then applies and, with the same notation, $\DP{X}$ is an oriented Poincaré complex of dimension $2s$ without boundary and of orientation class given by 
\[
[\DP{X}] = i^{-1}(i_{1} \oplus i_{2}([X_{reg},\partial X_{reg}], [-e_{\phi}]_{1}, \dots, [-e_{\phi}]_{r})),
\]
and all the results obtained before remain true except for the proposition \ref{propo:equiv_IX_IXphi} which has to be modified. We have to take the normal intersection space $\mathcal{I}X = \nI{m}{X} = \nI{n}{X}$ to modify the proposition \ref{propo:equiv_IX_IXphi}. Which becomes

\begin{propo}
Let $X$ be a compact, connected oriented normal pseudomanifold of dimension $n=2s$ with only isolated singularities $\Sigma = \lbrace \sigma_{1}, \dots, \sigma_{\nu} ; \nu >1 \rbrace$ of links $L_{i}$ simply connected. Suppose moreover that $H^{1}(X_{reg})=0$. Then $\DP{X}$ is rationally homotopy equivalent to $\tr{2s-1}{\mathcal{I}X} \cup e^{2s}$ where  $\tr{2s-1}{\mathcal{I}X}$ is the $(2s-1)$-truncation of $\mathcal{I}X$ and where $e^{2s}$ is attached by ordinary Whitehead products (not iterated) with respect to some basis of $\pi_{\ast}(\tr{2s-1}{\mathcal{I}X}) \otimes \mathbf{Q}$ and the rational homotopy type of $\DP{X}$ is determined by $\tr{2s-1}{\mathcal{I}X}$.
\end{propo}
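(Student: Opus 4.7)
The plan is to adapt the successive homotopy pushout argument of Proposition~\ref{propo:equiv_IX_IXphi} to the multi-singularity setting, and then to reduce the resulting presentation of $\DP{X}$ as ``$\mathcal{I}X$ with $\nu$ cells attached'' to the asserted ``$\tr{2s-1}{\mathcal{I}X}$ with a single top cell''.

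First I would assemble the three-row homotopy pushout diagram with rows $L(\Sigma,X)\to X_{reg}$, $\bigsqcup_{i} S^{2s-1}\to \bigsqcup_{i} \cotr{}{L_{i}}$, and $\bigsqcup_{i} D^{2s}\to \bigsqcup_{i} \cotr{\phi_{i}}{L_{i}}$, and iterate pushouts exactly as in Proposition~\ref{propo:equiv_IX_IXphi}. This exhibits $\DP{X}$ as the homotopy pushout of $\bigsqcup_{i} D^{2s} \leftarrow \bigsqcup_{i} S^{2s-1} \to \mathcal{I}X$, so $\DP{X}$ is obtained from $\mathcal{I}X$ by attaching $\nu$ cells of dimension $2s$ along the $\phi_{i}$ composed with $\cotr{}{L_{i}} \to \mathcal{I}X$. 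Combined with $H^{1}(X_{reg})=0$, van Kampen applied to the defining pushout of $\mathcal{I}X$ ensures that $\mathcal{I}X$, and hence $\DP{X}$, are rationally simply connected.

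Second, I would compute the rational homology of $\mathcal{I}X$ in the top two degrees via Mayer--Vietoris. Using $H_{r}(\cotr{}{L_{i}})=H_{r}(L_{i})$ for $r\geq s$ together with Lefschetz duality on $(X_{reg}, \partial X_{reg})$, which yields $H_{2s-1}(X_{reg})\cong\mathbf{Q}^{\nu-1}$, a direct calculation gives $H_{2s}(\mathcal{I}X)=0$ and $H_{2s-1}(\mathcal{I}X)\cong\mathbf{Q}^{\nu-1}$. Chasing classes in the exact sequence identifies the $\nu$ attaching classes $[\phi_{i}]\in H_{2s-1}(\mathcal{I}X)$ with the images of the link fundamental classes $[L_{i}]$; they span $H_{2s-1}(\mathcal{I}X)$ subject to the unique relation $\sum_{i}[L_{i}]=0$ arising from the connecting homomorphism on $[X_{reg},\partial X_{reg}]\in H_{2s}(X_{reg},\partial X_{reg})$.

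Third I would argue that the $\nu$-cell attachment is rationally equivalent to $\tr{2s-1}{\mathcal{I}X}\cup e^{2s}$. Any $\nu-1$ of the classes $[\phi_{i}]$ already form a basis of $H_{2s-1}(\mathcal{I}X)$, so the corresponding cells kill $H_{2s-1}$ entirely while leaving $H_{2s}$ untouched; the last cell then attaches along a rationally null-homologous map and contributes one generator in $H_{2s}$. On the cellular model of $\mathcal{I}X$ provided by Theorem~\ref{thm:Q_cell_model}, which by the second part of Remark~\ref{rmk:Q_cell_model} has no cells in dimension $2s$ since $H_{2s}(\mathcal{I}X)=0$, these first $\nu-1$ attachments realize the construction of Definition~\ref{def:ntroncation}: they cone off a basis of $(2s-1)$-cycles, yielding $\tr{2s-1}{\mathcal{I}X}$, while the remaining cell becomes $e^{2s}$.

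Finally, Stasheff's Theorem~\ref{thm:stasheff} applies to the rationally simply connected $\tr{2s-1}{\mathcal{I}X}$: its cohomology agrees with $H^{\ast}(\DP{X})$ in degrees $\leq 2s-2$, vanishes above, and becomes the Poincaré duality algebra of $\DP{X}$ after adjoining the top class. One concludes that the rational homotopy type of $\DP{X}$ is determined by $\tr{2s-1}{\mathcal{I}X}$ and that $e^{2s}$ is attached via ordinary (non-iterated) Whitehead products on a basis of $\pi_{\ast}(\tr{2s-1}{\mathcal{I}X})\otimes \mathbf{Q}$. The main obstacle I anticipate is the cellular matching in step three: one must identify a CW-model of $\mathcal{I}X$ whose $(2s-1)$-cells constitute a cellular basis of $(2s-1)$-cycles along which the $\phi_{i}$ factor, so that the $\nu-1$ ``redundant'' attachments literally coincide with the defining data of $\tr{2s-1}{\mathcal{I}X}$. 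This is essentially bookkeeping once step two is in hand, but must be done carefully in the rational cellular model to yield a genuine rational homotopy equivalence rather than a mere quasi-isomorphism of homology.
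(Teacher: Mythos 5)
Your argument is correct and reaches the stated conclusion, but the middle of your proof is genuinely different from the paper's. You share the first step: the iterated homotopy pushout exhibiting $\DP{X}$ as $\mathcal{I}X$ with $\nu$ top cells attached along the $\phi_{i}$. After that the paper does not compute $H_{\ast}(\mathcal{I}X)$ at all; it applies Theorem \ref{thm:Q_cell_model} directly to the simply connected space $\DP{X}$, so that the cellular differential of $\widetilde{\DP{X}}$ vanishes and $H_{2s}(\DP{X})\cong\mathbf{Q}$ forces a single top cell, $\widetilde{\DP{X}}=X_{0}\cup_{\theta}e^{2s}$, and then identifies $X_{0}$ with $\tr{2s-1}{\mathcal{I}X}$ using Poincaré duality of $\DP{X}$ (namely $H_{2s-1}(X_{0})\cong H^{1}(X_{reg})=0$) together with the commutative diagram relating $\varphi$ to the truncation structure of $\mathcal{I}X$. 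Your route instead computes $H_{2s}(\mathcal{I}X)=0$ and $H_{2s-1}(\mathcal{I}X)\cong\mathbf{Q}^{\nu-1}$ by Mayer--Vietoris, identifies the attaching classes with the images of the $[L_{i}]$ subject to the single relation $\sum_{i}[L_{i}]=0$, and lets $\nu-1$ of the cells do the truncating while the last one becomes $e^{2s}$. Both work: the paper lets Poincaré duality of the already-constructed $\DP{X}$ do the counting and so avoids the Mayer--Vietoris bookkeeping, while your version makes visible exactly which attached cells kill $H_{2s-1}$ and why precisely one survives as the fundamental cell.

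One caveat on your third step. The spatial truncation of Definition \ref{def:ntroncation} is a \emph{subcomplex} of $\mathcal{I}X/(2s-1)$, obtained by deleting the $(2s-1)$-cells carrying a basis of cycles, not a complex obtained by coning such cycles off; so the $\nu-1$ attachments cannot ``literally coincide with the defining data of $\tr{2s-1}{\mathcal{I}X}$,'' and the cell-matching you flag as bookkeeping is not available in that form. What is true, and suffices, is that the composite $\tr{2s-1}{\mathcal{I}X}\rightarrow\mathcal{I}X\rightarrow\mathcal{I}X\cup(e_{1}^{2s}\cup\dots\cup e_{\nu-1}^{2s})$ is a rational homology isomorphism of simply connected spaces: both sides carry $H_{r}(\mathcal{I}X)$ for $r\leq 2s-2$ and vanish from degree $2s-1$ on, the target because the $\nu-1$ attaching classes form a basis of $H_{2s-1}(\mathcal{I}X)$ and $H_{2s}(\mathcal{I}X)=0$. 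Hence it is a rational homotopy equivalence, the remaining attaching map transports along it after rationalization, and Theorem \ref{thm:stasheff} applies as you say. Phrased this way your gap closes without matching any cells.
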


\begin{proof}
Consider the following diagram, obtained by the construction of $\DP{X}$

\[
\begin{tikzpicture}[injection/.style={right hook->,fill=white, inner sep=2pt}]

\matrix (m)[matrix of math nodes, row sep=2em, column sep=5em, text height=1.5ex, text depth=0.25ex]
{                                      & \bigsqcup_{\sigma_{i}} L_{i}=\partial X_{reg}   & X_{reg}         \\
\bigsqcup_{\sigma_{i}} S_{i}^{2s-1}    & \bigsqcup_{\sigma_{i}} \cotr{}{L_{i}}           & \mathcal{I}X        \\
\bigsqcup_{\sigma_{i}} D_{i}^{2s}      & \bigsqcup_{\sigma_{i}} \cotr{\phi_{i}}{L_{i}}   & \DP{X} \\};

\path[->]
(m-1-2) edge node[style] {} (m-2-2);
\path[->]
(m-1-3) edge node[left=2em] {(I) HPO} (m-2-3);
\path[->]
(m-2-1) edge node[auto,swap] {$i_{2}$} (m-3-1);
\path[->]
(m-2-2) edge node[left=2em] {(III) HPO} (m-3-2);
\path[->]
(m-2-3) edge node[left=2.5em] {(II)} (m-3-3);

\path[injection]
(m-1-2) edge node[auto] {$i_{1}$} (m-1-3);
\path[->]
(m-2-1) edge node[auto]{$\bigsqcup_{\sigma_{i}} \phi_{i}$} (m-2-2);
\path[->]
(m-2-2) edge node[style] {} (m-2-3);
\path[->]
(m-3-1) edge node[style] {} (m-3-2);
\path[->]
(m-3-2) edge node[style] {} (m-3-3);
\end{tikzpicture}
\]

With the same arguments than for the unique isolated singularity case, $\DP{X}$ is rationally homotopy equivalent to
\[
\DP{X} \simeq \mathcal{I}X \cup (\bigcup_{\phi_{i}} e^{2s}_{i}).
\]

Now, $H^{1}(X_{reg})=0$ so $\DP{X}$ is simply connected and the theorem \ref{thm:Q_cell_model} gives a rational homotopy equivalence
\[
\varphi \col \widetilde{\DP{X}} \longrightarrow \DP{X}
\]
such that the differential in the integral cellular chain complex of $\widetilde{\DP{X}}$ is identically zero. This implies that there is only one top dimensional cell on $\widetilde{\DP{X}}$, we have a attaching map $\theta$ and a cell $e^{2s}$ such that
\[
\widetilde{\DP{X}} = X_{0} \cup_{\theta} e^{2s}.
\]
Let us now determine $X_{0}$. By the theorem \ref{thm:Q_cell_model} $X_{0}$ is a CW-complex of dimension $2s-1$ and by the Poincaré duality of $\DP{X}$ we have 
\[
H_{2s-1}(X_{0}) = H_{2s-1}(\DP{X}) \cong H^{1}(\DP{X}) = H^{1}(X_{reg})=0.
\]
For any other $r \leq 2s-2$ we have by construction $H_{r}(X_{0}) = H_{r}(\mathcal{I}X)$.

\[
\begin{tikzpicture}
\matrix (m)[matrix of math nodes, row sep=3em, column sep=2em, text height=2ex, text depth=0.25ex]
{ X_{0}                  &                    &                     &              & X_{0} \cup_{\theta} e^{2s}   \\
 \tr{2s-1}{\mathcal{I}X} &  \mathcal{I}X/2s-1 & \mathcal{I}X^{2s-1} & \mathcal{I}X & \DP{X}\\};

\path[->]
(m-1-1) edge node[auto] {$\mathrm{incl}$} (m-1-5);
\path[->]
(m-2-1) edge node[auto,swap] {$i$} (m-2-2);
\path[->]
(m-2-2) edge node[auto,swap] {$h$} (m-2-3);
\path[->]
(m-2-3) edge node[auto,swap] {$j$} (m-2-4);
\path[->]
(m-2-4) edge node[auto,swap] {} (m-2-5);
\path[->]
(m-1-1) edge node[auto,swap] {$\varphi_{|}$} (m-2-1);
\path[->]
(m-1-5) edge node[auto] {$\varphi$} (m-2-5);
\end{tikzpicture}
\]

The above diagram, where the maps $i$ and $j$ are cellular inclusions and $h$ a cellular homotopy equivalence as in the definition of the homology truncation \ref{def:ntroncation}, commutes and by the same argument that the one given before the proposition \ref{propo:truncation}, $\varphi$ restricts to a rational homotopy equivalence
\[
\varphi_{|} \col X_{0} \longrightarrow \tr{2s-1}{\mathcal{I}X}.
\]

Then, up to rational homotopy equivalence, we have
\[
\DP{X} = \tr{2s-1}{\mathcal{I}X} \cup_{\theta} e^{2s}.
\]

The theorem \ref{thm:stasheff} then tells us that $e^{2s}$ is attached by ordinary Whitehead products (not iterated) with respect to some basis of $\pi_{\ast}(\tr{2s-1}{\mathcal{I}X}) \otimes \mathbf{Q}$ and the rational homotopy type of $\DP{X}$ is determined by $\tr{2s-1}{\mathcal{I}X}$.
\end{proof}

In particular, if we have only one normal isolated singularity and if $\DP{X}$ is simply connected, then $H_{2s-1}(\DP{X}) = H_{2s-1}(\mathcal{I}X)= H_{2s-1}(IX)=0$ and $\tr{2s-1}{\mathcal{I}X} \simeq IX$. We then get back the proposition \ref{propo:equiv_IX_IXphi}.
 
The others results remain true, in particular $\DP{X}$ is a rational Poincaré duality space and we have the first part of the theorem \ref{thm:main_thm_mult_sing}.

\begin{propo}
If $\dim X = 2s$, then $\DP{X}$ is a good rational Poincaré approximation of $X$. Moreover, if $\dim X \equiv 0 \mod 4$, then the Witt class associated to the intersection form $b_{\DP{X}}$ is the same that the Witt class associated to the middle intersection cohomology of $X$ in $W(\mathbf{Q})$.
\end{propo}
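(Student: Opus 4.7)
The strategy is to verify the two halves of the conclusion separately: first the homological conditions making $\DP{X}$ a good rational Poincaré approximation, then the Witt class identity.

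For the homological conditions, I would identify the maps $\phi$ and $\psi$ as follows. The map $\phi \col X_{reg} \to \DP{X}$ is the right vertical map in the middle row of the pushout diagram. The map $\psi \col \DP{X} \to X$ arises from the map of pushout diagrams obtained by collapsing each $\cotr{\phi_i}{L_i}$ to a point, which replaces $\bigsqcup_i \cotr{\phi_i}{L_i}$ by $\bigsqcup_i \ast$ and turns $X_{reg}$ attached along $\partial X_{reg}$ into $X$ (coning off each boundary link). Once these maps are fixed, I would run the Mayer--Vietoris sequence of the homotopy pushout defining $\DP{X}$ side-by-side with the one for $X$. The key computational input is the homology of $\cotr{\phi_i}{L_i}$: by construction this space is built from $\cotr{k(\overline{m})}{L_i}$ by attaching a single $2s$-cell along $\phi_i$, whose effect by Lemma \ref{lem:condition_PDpair} is exactly to add a fundamental class in degree $2s$ while leaving the cotruncation homology unchanged in degrees $< 2s$. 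Thus for $1 \leq r < s$ we have $H_r(\cotr{\phi_i}{L_i}) = 0$, and for $s \leq r < 2s$ the map $H_r(L_i) \to H_r(\cotr{\phi_i}{L_i})$ is an isomorphism. Comparing the two Mayer--Vietoris sequences degree by degree gives the required isomorphism of $\phi_r$ for $s < r < 2s-1$ and injectivity for $r=s$; likewise the iso of $\psi_r$ for $r < s$ and $r = 2s$ follows from the fact that collapsing the cotruncation to a point is a homology iso in those degrees.

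For the Witt class, I would apply Novikov's gluing lemma \ref{lem:novikov} to the decomposition $\DP{X} = X_{reg} \cup_{\partial X_{reg}} \bigsqcup_i \cotr{\phi_i}{L_i}$. For each $i$, the intersection form on $H_{2s}(\cotr{\phi_i}{L_i})$ has kernel $i_\ast(H_{2s}(L_i))$; but by the homology computation above, $i_\ast \col H_{2s}(L_i) \to H_{2s}(\cotr{\phi_i}{L_i})$ is surjective (in fact an iso onto the fundamental class), so the form is identically zero and its Witt class vanishes. Novikov's lemma therefore gives
\[
[b_{\DP{X}}] = [b_{X_{reg}}] - \sum_i [b_{\cotr{\phi_i}{L_i}}] = [b_{X_{reg}}] \in W(\mathbf{Q}).
\]
It remains to identify $[b_{X_{reg}}]$ with the Witt class of middle intersection cohomology, and this is precisely \cite[Theorem 2.28]{Banagl2010}, which shows that the intersection form of the regular part represents the same Witt class as $b_{IH}$.

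The main technical obstacle is the bookkeeping in the Mayer--Vietoris comparison, specifically ensuring that the connecting homomorphisms behave as expected near the critical degrees $r = s$ and $r = 2s-1$, where the cotruncation has its boundary behaviour and where the added $2s$-cells kick in. The Witt class argument, by contrast, is essentially formal once Novikov's lemma and Banagl's theorem are in hand.
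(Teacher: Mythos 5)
Your proposal follows essentially the same route as the paper: the maps $\phi$ and $\psi$ are read off the stacked homotopy pushout diagrams $L \to X_{reg}$, $\bigsqcup_i\cotr{\phi_i}{L_i} \to \DP{X}$, $\bigsqcup_i\ast \to X$, the approximation conditions come from comparing the resulting Mayer--Vietoris sequences (the paper leaves this bookkeeping implicit), and the Witt class is obtained from Novikov's lemma \ref{lem:novikov} plus \cite[Theorem 2.28]{Banagl2010}, exactly as in the single-singularity discussion of \ref{subsubsec:2k_dim_case}. One harmless slip: attaching the $2s$-cell does not ``add a fundamental class'' to the absolute homology of $\cotr{\phi_i}{L_i}$ --- it kills $H_{2s-1}(\cotr{}{L_i})\cong\mathbf{Q}$ and leaves $H_{2s}(\cotr{\phi_i}{L_i})=0$, the orientation class living in the relative group $H_{2s}(\cotr{\phi_i}{L_i},L_i)$, so your claimed isomorphism $H_r(L_i)\to H_r(\cotr{\phi_i}{L_i})$ fails at $r=2s-1$ --- but since the approximation conditions only involve $r<s$, $r=s$, $s<r<2s-1$ and $r=2s$, and the Witt argument only needs $H_{2s}(L_i)\to H_{2s}(\cotr{\phi_i}{L_i})$ surjective in the $n=4s$ reindexing (where it is an isomorphism between middle-degree groups, not onto a fundamental class), nothing in your argument breaks.
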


\subsubsection{The odd dimensional case}
\begin{propo}
Let $X$ be a compact, connected oriented normal pseudomanifold of dimension $n=2s+1$ with only isolated singularities $\Sigma= \lbrace \sigma_{1}, \dots, \sigma_{\nu} ; \nu >1\rbrace$ of links $L_{i}$ simply connected. 
\begin{enumerate}
\item Suppose moreover $X$ is a Witt space, then $\DP{X}$ exists and is a very good rational Poincaré duality space,
\item Suppose moreover $X$ is an L-space, then $\DP{X}$ is a good rational Poincaré approximation of $X$.
\end{enumerate}
\end{propo}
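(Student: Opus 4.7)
The plan is to extend the single-singularity arguments of the previous subsection to $\nu>1$ links by running them independently on each singularity. For each $\sigma_{i}\in\Sigma$, the link $L_{i}$ is a simply connected closed manifold of dimension $2s$, and I apply the corresponding single-singularity construction to $L_{i}$: in the Witt case I invoke the rational Hurewicz argument of Proposition \ref{propo:2k+1_Witt_case} (producing $\phi_{i}\col S^{2s}\rightarrow\cotr{}{L_{i}}$ with the connecting morphism an isomorphism) to build a rational Poincaré duality pair $(C_{i}, L_{i})$ of dimension $2s+1$ with $C_{i}:=\cotr{\phi_{i}}{L_{i}}$; in the L-space case, I apply Lagrangian truncation and the indecomposability lemma to obtain $\phi_{i}\in\pi_{2s}(\cotrL{L_{i}})\otimes\mathbf{Q}$ so that $(C_{i},L_{i}):=(\cotr{\overline{\mathcal{L}},\phi_{i}}{L_{i}},L_{i})$ is a rational Poincaré duality pair, following Proposition \ref{propo:2k+1_L_space}.

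Since $(X_{reg},\partial X_{reg})$ is a compact oriented manifold with boundary, it is a rational Poincaré-Lefschetz pair of dimension $2s+1$ with oriented boundary $\partial X_{reg}=\bigsqcup_{i}L_{i}$, which matches the oriented boundary of $\bigsqcup_{i}C_{i}$. I then apply Theorem \ref{thm:glueing} to glue along the common boundary, obtaining
\[
\DP{X} := X_{reg} \cup_{\partial X_{reg}} \bigsqcup_{i} C_{i},
\]
an oriented rational Poincaré complex of dimension $2s+1$ without boundary. The required factorization $X_{reg}\xrightarrow{\phi}\DP{X}\xrightarrow{\psi}\overline{X}$ of Definition \ref{def:Q_Poinc_approx} is given by the evident inclusion $\phi$ and by the map $\psi$ that collapses each $C_{i}$ onto the cone $\overline{c}L_{i}\subset\overline{X}$, which exists because each $\overline{c}L_{i}$ is contractible and contains $L_{i}$ as a subspace.

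To verify the good/very good conditions, I would compute $H_{*}(\DP{X})$ via the Mayer-Vietoris sequence of the stacked pushout diagrams displayed after the theorem statement and compare with $H_{*}(X_{reg})$ and $H_{*}(\overline{X})$. The key input is the homology of $C_{i}$: in the Witt case, the $s$-connectivity of $\cotr{}{L_{i}}$ (since $H_{s}(L_{i})=0$) together with the cell attached via $\phi_{i}$ killing the top class gives $H_{r}(C_{i})=0$ for $1\leq r\leq s$ and for $r=2s,2s+1$, while $H_{r}(C_{i})=H_{r}(L_{i})$ for $s+1\leq r\leq 2s-1$; in the L-space case the same formulas hold except that $H_{s}(C_{i})=\overline{V_{i}}$ is generally nonzero. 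Feeding these into the Mayer-Vietoris sequence and comparing with the analogous sequence for $\overline{X}=X_{reg}\cup_{\partial X_{reg}}\bigsqcup_{i}\overline{c}L_{i}$ determines the behaviour of $\phi_{r}$ and $\psi_{r}$ in each prescribed range.

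The main obstacle is the bookkeeping in degrees $r=s,s+1,s+2$ where the connecting maps from $H_{*}(\partial X_{reg})$ interact with the nontrivial part of $H_{*}(C_{i})$. In the Witt case, the vanishing of $H_{s}(C_{i})$ and $H_{s+1}(C_{i})$ collapses the relevant portion of the Mayer-Vietoris sequence and upgrades $\phi_{s+1}$ and $\psi_{s}$ to isomorphisms, yielding the \emph{very good} approximation. In the L-space case, the Lagrangian contribution $\overline{V_{i}}$ in degree $s$ obstructs this last upgrade, so only the \emph{good} conditions of Definition \ref{def:Q_Poinc_approx} survive. All remaining items (finiteness of the rational Betti numbers and orientation class) follow automatically from Theorem \ref{thm:glueing}.
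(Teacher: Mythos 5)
Your proposal matches the paper's proof, which literally consists of the single sentence that the argument is the same as in Propositions \ref{propo:2k+1_Witt_case} and \ref{propo:2k+1_L_space} but with multiple links $L_{i}$; your elaboration (independent truncation and cell attachment on each $L_{i}$, glueing via Theorem \ref{thm:glueing}, Mayer--Vietoris for the good/very good conditions) is exactly what that sentence leaves implicit. One slip in your verification sketch: in the Witt case $H_{s+1}(C_{i})$ does \emph{not} vanish --- it equals $H_{s+1}(L_{i})$, as you yourself state two sentences earlier --- and what actually upgrades $\phi_{s+1}$ to an isomorphism is that $H_{s+1}(L_{i})\to H_{s+1}(C_{i})$ is an isomorphism together with $H_{s}(\partial X_{reg})=0$, which splits the Mayer--Vietoris sequence in that degree; the conclusion you draw is nevertheless correct.
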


\begin{proof}
The proof is the same as the ones for the propositions \ref{propo:2k+1_Witt_case} and \ref{propo:2k+1_L_space} but with multiple links $L_{i}$.
\end{proof}

\section{Examples and applications}
\subsection{Real Algebraic varieties}

If $V$ is a real algebraic variety of even dimension with isolated singularities $\Sigma = \lbrace \sigma_{1}, \dots, \sigma_{\nu} \rbrace$ and an oriented regular part $V_{reg}$. We can apply the homological truncation and then by the use of the precedents results construct a rational Poincaré approximation $\DP{V}$.

The odd dimensional is more interesting. Suppose that $V$ is a real algebraic variety of  with multiple isolated singularities of odd dimension. If the regular part $V_{reg}$ of $V$ is oriented then $V$ is automatically an L-space due to the following result of Selman Akbulut and Henry King : 

\begin{thm}[\cite{Akbulut1981}]
Let $V$ be a compact topological space. Then the following are equivalent :
\begin{enumerate}
\item $V$ is homeomorphic to a real algebraic set with isolated singularities.
\item $V$ is homeomorphic to the quotient obtained by taking a smooth closed manifold $M$ and collapsing each $L_{i}$ to point a point where $L_{i}$, $i = 1, \dots, \nu$ is a collection of disjoint smooth subpolyhedra of $M$.
\item $V = M \cup \bigcup_{i=1}^{\nu} cL_{i}$ where $M$ and $L_{i}$ are smooth compact manifolds, $\partial M$ is the disjoint union of the $L_{i}$'s, each $L_{i}$ bounds a smooth compact manifolds and $L_{i} \times 1 \subset cL_{i}$ is identified with $L_{i} \subset M$.
\end{enumerate}
\end{thm}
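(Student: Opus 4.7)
The plan is to establish the three-way equivalence by handling $(2)\Leftrightarrow(3)$ as a purely topological matter, dealing with $(1)\Rightarrow(3)$ by a local analysis of real algebraic isolated singularities, and concentrating the real content in $(3)\Rightarrow(1)$, which is where the main obstacle lies.

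For $(3)\Rightarrow(2)$, I would glue to $M$ each smooth compact manifold $W_{i}$ witnessing $L_{i}=\partial W_{i}$ along the common boundary component $L_{i}$, forming a closed smooth manifold $\widehat{M}:=M\cup_{\partial M}\bigsqcup_{i}W_{i}$ in which the $W_{i}$ are disjoint compact subpolyhedra; their simultaneous collapse to points yields $V$. For the converse $(2)\Rightarrow(3)$, I would take closed regular neighbourhoods $N_{i}$ of the $L_{i}$ in $\widehat{M}$, excise their interiors to obtain $M:=\widehat{M}\setminus\bigsqcup_{i}\mathrm{int}(N_{i})$, and rename $L_{i}':=\partial N_{i}$; the collapse of a subpolyhedron inside its regular neighbourhood reproduces the cone $cL_{i}'$, and $N_{i}$ is the desired smooth compact manifold bounding $L_{i}'$.

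For $(1)\Rightarrow(3)$, embedding $V$ in some $\mathbf{R}^{N}$, I would use transversality of $V$ to small spheres centred at each $\sigma_{i}$: for $\varepsilon$ small the link $L_{i}:=V\cap S_{\varepsilon}(\sigma_{i})$ is a smooth closed manifold, the trace $V\cap B_{\varepsilon}(\sigma_{i})$ is homeomorphic to the cone $cL_{i}$, and $L_{i}=\partial\bigl(V_{reg}\cap\overline{B_{\varepsilon}(\sigma_{i})}\bigr)$ already provides a smooth compact bounding manifold. Removing the interiors of these conical neighbourhoods leaves a smooth compact $M$ with $\partial M=\bigsqcup_{i}L_{i}$, which is exactly the structure required by $(3)$.

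The main obstacle is $(3)\Rightarrow(1)$, and my plan rests on combining three classical ingredients carefully. First, by Nash--Tognoli, $M$ may be taken diffeomorphic to a nonsingular real algebraic set. Second, the bounding hypothesis on each $L_{i}$ is precisely the obstruction-theoretic condition needed to realize the open cone $cL_{i}$ as an algebraic neighbourhood of an isolated singularity of some real algebraic set, via the algebraic cone construction of Akbulut--King; the relevant unoriented bordism class of $L_{i}$ vanishes by hypothesis. Third, the local algebraic pieces must be glued along the boundaries by means of an algebraic approximation of the collar identification diffeomorphisms. The delicate point, and the technical core of \cite{Akbulut1981}, is that these three constructions can be performed compatibly: each local algebraic model of $cL_{i}$ is determined only up to algebraic isotopy, so one must verify that the global gluing stays within the real algebraic category. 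This compatibility step is the hard part, and beyond invoking Akbulut--King's approximation machinery I do not see a direct shortcut.
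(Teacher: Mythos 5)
The paper does not prove this statement: it is quoted verbatim from Akbulut--King \cite{Akbulut1981} and used as a black box, so there is no internal proof to compare your attempt against. Judged on its own terms, your reduction $(2)\Leftrightarrow(3)$ is sound (regular neighbourhoods in one direction, gluing the bounding manifolds $W_{i}$ and collapsing them in the other, using that coning off a boundary component is the same as collapsing it), and your honest deferral of $(3)\Rightarrow(1)$ to the algebraic approximation machinery of Akbulut--King is a fair description of where the real content lies, though of course it is not a proof.

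There is, however, a concrete gap in your $(1)\Rightarrow(3)$ step. The local conic structure theorem does give $V\cap \overline{B_{\varepsilon}(\sigma_{i})}\cong cL_{i}$ with $L_{i}=V\cap S_{\varepsilon}(\sigma_{i})$ a closed smooth manifold, but the set you offer as a compact manifold bounding $L_{i}$, namely $V_{reg}\cap\overline{B_{\varepsilon}(\sigma_{i})}$, is the punctured cone $cL_{i}\setminus\{\sigma_{i}\}\cong L_{i}\times(0,1]$: it is noncompact and is merely a collar on $L_{i}$, so it witnesses nothing. Note also that $L_{i}$ being one of several boundary components of $M=V\setminus\bigsqcup_{i}\mathrm{int}(cL_{i})$ only shows that the \emph{disjoint union} of all the links bounds, not that each $L_{i}$ bounds individually, which is what clause $(3)$ asserts. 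The individual null-cobordism of the link of an isolated real algebraic singularity is a genuine theorem; the standard argument takes a resolution of singularities $\pi\colon\widetilde{V}\rightarrow V$ and observes that $\pi^{-1}\bigl(V\cap\overline{B_{\varepsilon}(\sigma_{i})}\bigr)$ is a compact smooth manifold with boundary $L_{i}$. Without some such input this implication does not close, and it is precisely the ingredient that makes the bounding hypothesis in $(3)$ a nonvacuous necessary condition.
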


Consider then $V$ a oriented real algebraic variety of dimension $n=4s+1$ with $\nu$ isolated singularities. Then by the third equivalence we have
\[
V = M \cup \bigcup_{i=1}^{\nu} cL_{i}
\]
and each link $L_{i}$ is a smooth compact manifold of dimension $4s$ and is the boundary of a $4s+1$ smooth compact manifold. Then by the lemma \ref{lem:thom_hyperb_link} we have that
\[
[b_{i}] = 0 \in W(\mathbf{Q}) \, \forall i.
\]

We can perform a Lagrangian truncation and we have our rational Poincaré approximation $\DP{V}$.

Note that if $V$ is of dimension $4s+3$ then we don't need this result because the bilinear form $b_{L}$ would be skew-symmetric.

We have the following result
\begin{propo}
Every oriented real algebraic variety $V$ with only isolated singularities and simply connected links admits at least a good rational Poincaré approximation $\DP{V}$.
\end{propo}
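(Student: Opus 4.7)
The plan is to reduce the statement to Theorems \ref{thm:main_thm_one_sing} and \ref{thm:main_thm_mult_sing} by showing that in every dimension the hypotheses needed to run the construction of $\DP{V}$ are satisfied. Accordingly, I would split the argument according to the parity of $n=\dim V$ and, when $n$ is odd, further according to $n \bmod 4$.

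If $n$ is even, nothing new is required: the even-dimensional statements of the two main theorems produce a good rational Poincaré approximation $\DP{V}$ from the homological truncation of the links, with no extra assumption beyond orientation of $V_{reg}$ and simple connectivity of the $L_{i}$. So the first step is simply to invoke these theorems verbatim.

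If $n = 4s+3$, each link $L_{i}$ has even dimension $4s+2$, so the Poincaré duality intersection form $b_{i}$ on $H^{2s+1}(L_{i})$ is skew-symmetric by graded commutativity. A non-degenerate symplectic form over $\mathbf{Q}$ on a finite-dimensional vector space always admits a Lagrangian subspace, so $V$ is automatically an L-space, and Theorem \ref{thm:main_thm_mult_sing}(2) applies to give $\DP{V}$.

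The delicate case, which I regard as the main obstacle, is $n = 4s+1$: here each link $L_{i}$ is a closed oriented manifold of dimension $4s$ and the intersection form $b_{i}$ is symmetric, so the existence of a Lagrangian subspace is a non-trivial constraint on $L_{i}$. The plan is to invoke the Akbulut--King structure theorem recalled just before the proposition: it produces a smooth compact manifold $M$ with $\partial M = \bigsqcup_{i} L_{i}$ and the further information that each $L_{i}$ is itself the boundary of a smooth compact manifold $W_{i}$. In particular, $(W_{i},L_{i})$ is an oriented Poincaré duality pair of dimension $4s+1$ whose oriented boundary is $L_{i}$. Thom's Lemma \ref{lem:thom_hyperb_link} then gives $[b_{i}] = 0$ in $W(\mathbf{Q})$, so $b_{i}$ is hyperbolic and, by Theorem \ref{thm:hyperb_completion}, admits a Lagrangian subspace. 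Hence $V$ is an L-space and Theorem \ref{thm:main_thm_mult_sing}(2) (or Proposition \ref{propo:2k+1_L_space} in the unique singularity case) again produces the desired $\DP{V}$. Combining the three cases yields the proposition.
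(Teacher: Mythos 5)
Your argument is correct and coincides with the paper's own proof: even dimensions are handled directly by the homological truncation theorems, dimension $4s+3$ by skew-symmetry of the linking form, and dimension $4s+1$ by combining the Akbulut--King characterization (each link bounds a smooth compact manifold) with Thom's Lemma \ref{lem:thom_hyperb_link} to obtain $[b_{i}]=0$ in $W(\mathbf{Q})$ and hence a Lagrangian subspace. No further comment is needed.
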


\subsection{Hypersurfaces with nodal singularities}

Let $V$ be a complex projective hypersurface with one nodal singularity such that $\dim_{\mathbf{C}} V = 3$. The link of this singularity is $L = S^{2} \times S^{3}$, by theorem \ref{thm:stasheff} the link of the singularity is rationally homotopy equivalent to 
\[
L \simeq (S^{2} \vee S^{3}) \bigcup_{\psi} e^{5}
\]
where $\psi$ is a Whitehead product.

Since $\overline{m}(6) = \overline{n}(6) = 2$, the homological truncation of the link is
\[
\tr{2}{L} = S^{2}
\]
and the cotruncation is rationally homotopy equivalent to
\[
\cotr{2}{L} = S^{3} \vee S^{5}.
\]

To see this, just compute the cohomology algebra of the cotruncation. The rational Hurewicz theorem \ref{thm:Qhurewicz} then says that we have the isomorphism
\[
\pi_{5}(\cotr{2}{L}) \otimes \mathbf{Q} \overset{\cong}{\longrightarrow} H_{5}(\cotr{2}{L}) \cong H_{5}(S^{5}).
\]
The cell attachment $\phi$ obtained by this isomorphism then kill the $5$-sphere of the cotruncation. That is we have
\[
\cotr{\phi}{L} = (S^{3} \vee S^{5})\cup_{s_{5}^{\sharp}}e^{6} \simeq S^{3}.
\]

But $\cotr{\phi}{L} = S^{3} \simeq D^{3}\times S^{3}$ ans since $\partial (D^{3}\times S^{3}) = S^{2} \times S^{3}$, the pair $(\cotr{\phi}{L}, L) = (D^{3}\times S^{3}, S^{2} \times S^{3})$ is a Poincaré duality pair. The space $\DP{V}$ is a good rational Poincaré approximation of $X$.

This construction extends to multiple isolated singularities and higher dimension complex hypersurfaces with nodal singularities.

\subsection{Thom Spaces}

\begin{defi}
Let $B$ be a compact, connected, oriented manifold of dimension $m$ and $E$ a fiber bundle over $B$ of rank $m'$,
\[
\mathbf{R}^{m'} \longrightarrow E \longrightarrow B.
\]
The Thom space $Th(E)$ of the fiber bundle $E$ is defined as the homotopy cofiber of the map
\[
S_{E} \longrightarrow D_{E}
\]
\noindent where $S_{E}$ and $D_{E}$ are respectively the sphere bundle and disk bundle associated to $E$.
\end{defi}

$Th(E)$ is then a pseudomanifold of dimension $m+m'$, the singularity is the compactification point, its link is the sphere bundle $S_{E}$ and the regular part of $Th(E)$ is the disk bundle $D_{E}$.

We show that in the case of an odd dimensional Thom space $Th(E)$ is either an L-space or a Witt space whether the rank of the vector bundle is lesser than the dimension of the base space or not.

\begin{thm}
\label{thm:Thom_L_Witt}
Suppose $m'>0$.
\begin{enumerate}
\item Let $\mathbf{R}^{2m'} \longrightarrow E \longrightarrow B^{2m+1}$ with $B$ be a manifold of dimension $2m+1$ and $E$ a fiber bundle over $B$ of rank $2m'$. Then,
\begin{itemize}
\item if $m' \leq m+1$, $Th(E)$ is an L-space,
\item if $m' > m+1$, $Th(E)$ is a Witt space if and only if $H^{m+m'}(B)=0$.
\end{itemize}

\item Let $\mathbf{R}^{2m'+1} \longrightarrow E \longrightarrow B^{2m}$ with $B$ be a manifold of dimension $2m$ and $E$ a fiber bundle over $B$ of rank $2m'+1$. Then,
\begin{itemize}
\item if $m' \leq m$, $Th(E)$ is an L-space,
\item if $m' > m$, $Th(E)$ is a Witt space if and only if $H^{m+m'}(B)=0$.
\end{itemize}
\end{enumerate}
\end{thm}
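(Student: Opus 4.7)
The plan is to reduce everything to a single calculation of the middle cohomology $H^{m+m'}(S_E)$ of the link via the Gysin sequence, combined with one invocation of Lemma~\ref{lem:thom_hyperb_link}. The structural remark that drives the argument is that the regular part of $Th(E)$ is the disk bundle $D_E$ and its boundary is the sphere bundle $S_E$, so the unique link of the compactification point bounds the smooth oriented compact manifold $D_E$ of dimension $2(m+m')+1$. In particular $(D_E,S_E)$ is a rational Poincaré duality pair, and $S_E$ is a rational Poincaré complex of dimension $2(m+m')$ that is an oriented Poincaré boundary.

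This already gives the L-space statements in both (1) and (2), independently of the size of $m'$. Indeed, the intersection form on $H^{m+m'}(S_E)$ is symmetric when $m+m'$ is even and skew-symmetric when $m+m'$ is odd. In the skew case a Lagrangian exists automatically. In the symmetric case one has $\dim S_E \equiv 0 \mod 4$, and Lemma~\ref{lem:thom_hyperb_link} applied to the bounding pair $(D_E,S_E)$ forces $[b_{S_E}] = 0 \in W(\mathbf{Q})$, so $b_{S_E}$ is hyperbolic and admits a Lagrangian. Thus $Th(E)$ is always an L-space; the dimensional bounds in the theorem simply mark the regime where one cannot yet extract the stronger Witt condition from these considerations alone.

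For the Witt characterizations I would feed the Gysin sequence of $S^{r-1} \hookrightarrow S_E \to B$ (with $r$ the rank of $E$) through the middle degree $m+m'$. In Case~(1), $r = 2m'$ and $\dim B = 2m+1$, and the relevant portion reads
\[
H^{m-m'}(B) \xrightarrow{\cup e} H^{m+m'}(B) \longrightarrow H^{m+m'}(S_E) \longrightarrow H^{m-m'+1}(B) \xrightarrow{\cup e} H^{m+m'+1}(B).
\]
When $m' > m+1$ the three outer groups vanish for dimensional reasons (negative indices, or above $\dim B$), so $H^{m+m'}(S_E) \cong H^{m+m'}(B)$ and the Witt condition $H^{m+m'}(S_E)=0$ is exactly $H^{m+m'}(B)=0$. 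The analogous Gysin sequence in Case~(2), with $r = 2m'+1$ and $\dim B = 2m$, gives the same identification as soon as $m' > m$; here $H^{m+m'}(B)$ itself vanishes by dimension ($m+m' > 2m$), which matches the theorem's condition vacuously and shows that $Th(E)$ is Witt in this range.

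No step is really a main obstacle: the work is dimensional bookkeeping in the Gysin sequence together with one citation of Thom's lemma. The only point that warrants a quick verification is that $(D_E,S_E)$ is genuinely a rational Poincaré duality pair of the correct dimension and that the hypotheses of Lemma~\ref{lem:thom_hyperb_link} (bounding, correct parity of the dimension) hold, which is immediate from the fact that $D_E$ is a compact oriented manifold with boundary $S_E$.
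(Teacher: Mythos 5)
Your proposal is correct, and for the Witt characterizations it is essentially the paper's argument in different clothing: the Gysin sequence of the sphere bundle is exactly the Leray--Serre spectral sequence with its single differential $\cup\, eu(E)$ that the paper uses, and the dimensional bookkeeping in the regime $m'>m+1$ (resp.\ $m'>m$) matches the paper's identification $H^{m+m'}(S_E)=E^{m+m',0}=H^{m+m'}(B)$. Where you genuinely diverge is the L-space half. The paper stays inside the spectral sequence: it observes that each of the two surviving summands $E^{m+m',0}$ and $E^{m-m'+1,2m'-1}$ is isotropic for the cup-product form (products within a summand land above the base or fiber dimension), so Poincaré duality forces them to be dually paired Lagrangians. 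You instead note that $S_E$ bounds the compact oriented disk bundle $D_E$, so Lemma~\ref{lem:thom_hyperb_link} kills the Witt class in the symmetric case and graded commutativity handles the skew case; this is precisely the mechanism behind the paper's own corollary that any odd-dimensional pseudomanifold with a single isolated singularity and simply connected link is an L-space, of which the Thom space (one singular point, link $S_E=\partial D_E$) is a special instance. Your route is shorter and proves the stronger statement that $Th(E)$ is \emph{always} an L-space, independent of the comparison between $m'$ and $m$ (consistent with the theorem, since in the large-$m'$ regime $H^{m+m'}(S_E)$ vanishes outright); what it does not provide, and what the paper's computation does, is an explicit pair of Lagrangian subspaces, which the paper exploits in the subsequent worked example over $B^9=(S^3\times S^6)\sharp(S^4\times S^5)$ to choose the generator $s_6$ as a Lagrangian. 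Both arguments implicitly assume $E$ is orientable (the paper to have an Euler class, you to orient $D_E$), so neither is at a disadvantage there.
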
 

\begin{proof}
Consider $\mathbf{R}^{2m'} \longrightarrow E \longrightarrow B^{2m+1}$. 

In order to know if $Th(E)$ is an L-space or a Witt space we have to look at $H^{m+m'}(S_{E})$ where $S_{E}$ is the sphere bundle associated to the vector bundle $E$ (see definitions \ref{def:Wittspace} and \ref{def:Lspace}). To compute $H^{m+m'}(S_{E})$ we use the cohomological Leray-Serre spectral sequence associated to the fiber bundle
\[
S^{2m'-1} \longrightarrow S_{E} \longrightarrow B^{2m+1}.
\]

We have $E_{2}^{p,q} = H^{p}(B;H^{q}(S^{2m'-1})) = 0$ if $q \neq 0, 2m'-1$ and 
\[
d_{2m'} : E_{2m'}^{p,2m'-1} \longrightarrow E_{2m'}^{p+2m',0}
\]
is the only non-zero differential which is defined by 
\[
d_{2m'} : E_{2m'}^{0,2m'-1} \longrightarrow E_{2m'}^{2m',0}
\]
with $d_{2m'}(a) = eu(E) \in H^{2m'}(B)$ where $a$ is the generator of $H^{2m'-1}(S^{2m'-1})$ and $eu(E) \in H^{2m'}(B)$ the Euler class of the sphere bundle. Since this is the only non-zero differential we have 
\[
H^{m+m'}(S_{E}) = E_{2m'+1}^{m+m',0} \oplus E_{2m'+1}^{m-m'+1,2m'-1}.
\]

Suppose that $m' \leq m+1$. 

The summand $E_{2m'+1}^{m-m'+1,2m'-1}$ is well defined and using the structure product of the spectral sequence, we see that the product of two elements belonging to the same summand of $H^{m+m'}(S_{E})$ is zero. Then by Poincaré duality the symmetric bilinear form 
\[
E_{2m'+1}^{m+m',0} \times E_{2m'+1}^{m-m'+1,2m'-1} \longrightarrow E_{2m'+1}^{2m+1,2m'-1} \cong \mathbf{Q}\omega a
\] 
induced by the product and where $\omega \in H^{2m+1}(B)$ is the fundamental class of the manifold $B$ is non degenerate. Thus, provided than one of the two summand is non zero, the symmetric bilinear form is then hyperbolic and $S_{E}$ is an L-space.

Suppose that $m' > m+1$.

Then $E_{2m'+1}^{m-m'+1,2m'-1}=0$ and $E_{2m'+1}^{m+m',0} = H^{m+m'}(B)$ and $S_{E}$ is a Witt space if and only is $H^{m+m'}(B)=0$.

We now consider $\mathbf{R}^{2m'+1} \longrightarrow E \longrightarrow B^{2m}$. By the same arguments we have
\[
H^{m+m'}(S_{E}) = E_{2m'+2}^{m+m',0} \oplus E_{2m'+2}^{m-m',2m'}.
\]

If $m' \leq m$ then $E_{2m'+2}^{m-m',2m'}$ is well defined, the same arguments about the product structure and Poincaré duality imply that $S_{E}$ is an L-space.

If $m' > m$ then $E_{2m'+2}^{m-m',2m'}=0$, $E_{2m'+2}^{m+m',0}=H^{m+m'}(B)$ and $Th(E)$ is a Witt space if and only if $H^{m+m'}(B)=0$.
\end{proof}

\begin{cor}
For any complex line bundle $\mathbf{C} \longrightarrow E \longrightarrow B^{k}$ with $k \geq 2$, the Thom space $Th(E)$ is an L-space.
\end{cor}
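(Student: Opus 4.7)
The plan is to obtain this as a direct specialization of Theorem~\ref{thm:Thom_L_Witt}. First I would translate the data: a complex line bundle $\mathbf{C}\to E\to B^{k}$ is a real oriented vector bundle of rank $2$, so in the notation of the theorem we are in the even-rank case with $2m'=2$, giving $m'=1$. The Thom space $Th(E)$ therefore has real dimension $k+2$, and the L-space condition from Definition~\ref{def:Lspace} is a genuine condition only when this dimension is odd, i.e.\ when $k=2m+1$.

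Next I would apply part~(1) of Theorem~\ref{thm:Thom_L_Witt} in this setting. The numerical hypothesis $m'\leq m+1$ reduces to $1\leq m+1$, which is automatic for every $m\geq 0$. So for every odd $k\geq 3$, the theorem furnishes the required Lagrangian inside $H^{m+m'}(S_E)$ coming from the off-diagonal summand $E_{2m'+1}^{m-m'+1,\,2m'-1}$ of the Leray--Serre spectral sequence of $S^{1}\to S_E\to B^{k}$, and $Th(E)$ is an L-space.

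For even $k\geq 2$ the Thom space has even dimension $k+2$, so Definition~\ref{def:Lspace} does not literally apply; in that range there is nothing to verify, since Theorem~\ref{thm:main_thm_mult_sing}(1) already produces a good rational Poincaré approximation of $Th(E)$ without any Lagrangian hypothesis on the link. The only genuine content of the corollary is thus the odd case, which is an immediate substitution into the preceding theorem; I do not expect any obstacle beyond bookkeeping between the complex rank $1$ and the real rank $2m'=2$.
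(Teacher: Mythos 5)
Your proposal is correct and matches the paper's (implicit) argument: the corollary is exactly the specialization of Theorem~\ref{thm:Thom_L_Witt} to a real rank-$2$ bundle, i.e.\ $m'=1$, where the hypothesis $m'\leq m+1$ holds automatically, and your remark that the statement has content only when $k$ is odd (so that $Th(E)$ is odd-dimensional and Definition~\ref{def:Lspace} applies) is the right reading.
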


Let $B^{9} := (S^{3} \times S^{6}) \sharp (S^{4} \times S^{5})$, where $\sharp$ denotes the connected sum, and let $f : B^{9} \rightarrow S^{4}$ the composition of the following contraction map $q$ and projection map $p$ :
\[
B^{9} \overset{q}{\longrightarrow} S^{4}\times S^{5} \overset{p}{\longrightarrow} S^{4}.
\]
Let $E$ be the fiber bundle over $B^{9}$ that is the pullback along $f$ of the tangent space over $S^{4}$,
\[
\begin{tikzpicture}

\matrix (m)[matrix of math nodes, row sep=3em, column sep=5em, text height=1.5ex, text depth=0.25ex]
{ \mathbf{R}^{4}           &    \mathbf{R}^{4}  \\
   E := f^{\ast}(TS^{4})   &     TS^{4}         \\
        B^{9}              &     S^{4}          \\};

\path[->]
(m-1-1) edge node[auto] {} (m-2-1);
\path[->]
(m-2-1) edge node[auto] {} (m-3-1);

\path[->]
(m-3-1) edge node[auto] {$f$} (m-3-2);
\path[->]
(m-2-1) edge node[auto] {} (m-2-2);

\path[->]
(m-1-2) edge node[auto] {} (m-2-2);
\path[->]
(m-2-2) edge node[auto] {} (m-3-2);
\end{tikzpicture}
\]

The Thom space $Th(E)$ associated to this bundle is a pseudomanifold of dimension 13 and is an $L$-space by theorem \ref{thm:Thom_L_Witt}.

Using the Leray-Serre spectral sequence we have
\[
H^{6}(S_{E}) = \mathbf{Q}s_{6} \oplus \mathbf{Q}s_{3}a.
\]
where $s_{i}$ represents the generator of the sphere $S^{i}$ in $B^{9}$ and $a$ the generator of the fiber $S^{3}$ of the sphere bundle. Denote by $\omega$ the fundamental class of $B^{9}$ with $\omega = s_{4}s_{5} =s_{3}s_{6}$.

Using the product structures of the spheres $S^{6}$ and $S^{3}$, we have $s_{6}s_{6} =0$ and $(s_{3}a)(s_{3}a)=0$, but since $\omega = s_{3}s_{6}$ the matrix of the intersection form
\[
H^{6}(S_{E}) \times H^{6}(S_{E}) \longrightarrow \mathbf{Q}
\]
in the base $(s_{6}, s_{3}a)$ is given by
\[
\begin{pmatrix}
0 & 1 \\
1 & 0
\end{pmatrix}.
\]
The intersection form is then hyperbolic and both factors $\mathbf{Q}s_{6}$ and $\mathbf{Q}s_{3}a$ are Lagrangian subspaces.

We now construct a rational model of $\DP{Th(E)}$. For that we'll need a surjective model of $S_{E} \hookrightarrow D_{E}$ and a model of the Lagrangian truncation $\cotrL{S_{E}}$. A surjective model of $S_{E} \hookrightarrow D_{E}$ is given by 
\[
A(D_{E}) \overset{\varphi}{\twoheadrightarrow} A(S_{E})
\]
with 
\[
\begin{cases}
A(S_{E})   & = (A(B) \otimes \wedge a, d) \text{ with } da = s_{4}\\
A(D_{E})   & = (A(B) \otimes \wedge (a,b), D) \text{ with } Da = s_{4} - b \\
\varphi_{|A(B) \otimes \wedge a} & = \mathrm{id} \\
\varphi(b) & = 0 
\end{cases}
\]
where $A(B)$ is a rational model of the base space $(S^{3} \times S^{6}) \sharp (S^{4} \times S^{5})$ which is given by
\[
A(B) =(\wedge (s_{3},s_{4},s_{5},s_{6},\beta_{6},\beta_{7,1},\beta_{7,2},\beta_{8},\dots),d)
\]
with $|s_{i}| = |\beta_{i}| = i$ and
\[
\begin{cases}
d \beta_{6}     & = s_{3}s_{4}\\
d \beta_{7,1} & = s_{4}^{2} \\
d \beta_{7,2} & = s_{5}s_{3} \\
d \beta_{8}     & = s_{3}s_{6}-s_{4}s_{5}.
\end{cases}
\]

Since $\dim B = 9$ we only gave elements of the model of $B$ up to degree 9, the rest of the model being an acyclic part. That is for every element $\alpha_{k}$ of degree $k \geq 10$ such that $d \alpha_{k} =0$, there is an element $\beta_{k-1}$ such that $d \beta_{k-1}= \alpha_{k}$. In fact we can take take a better model for $A(S_{E})$ and $A(D_{E})$ because the base space $B$ is a formal space, that is we have a quasi isomorphism
\[
\psi : (A(B),d) \longrightarrow (H(B),0)
\]
given by
\[
\begin{cases}
\psi(s_{i})          & = s_{i} \\
\psi(\beta_{i})      & = 0 \\
\psi(A^{\geq 10}(B)) & = 0.
\end{cases}
\]

The models we use are then
\[
\begin{cases}
A(S_{E})   & = (H(B) \otimes \wedge a, d) \text{ with } da = s_{4}\\
A(D_{E})   & = (H(B) \otimes \wedge (a,b), D) \text{ with } Da = s_{4} - b \\
\varphi_{|H(B) \otimes \wedge a} & = \mathrm{id} \\
\varphi(b) & = 0 
\end{cases}
\]

By adapting the proposition \ref{propo:truncation} and then using the lemma \ref{lem:cotruncation} we can show that a model of the Lagrangian cotruncation is given by
\[
(A(\cotrL{S_{E}}),d) = (\mathbf{Q} \oplus I_{\mathcal{L}},d)
\]
where $I_{\mathcal{L}}$ is the differential ideal given in this case by a choice of generator for one of the Lagrangian subspaces, a complementary of $\ker (d : A^{6}(S_{E}) \rightarrow A^{7}(S_{E}))$ and all the cochains of degree greater or equal to 7 of $A(S_{E})$. A choice of $A(\cotrL{S_{E}}))$ is given by
\[
\mathbf{Q} \oplus (\mathbf{Q}s_{6}\otimes (H(B)\otimes \wedge a)^{\geq 7})  
\]
and the model of $\I{\mathcal{L}}{Th(E)}$ is 
\[
A^{\ast}(\I{\mathcal{L}}{Th(E)}) = (H(B) \otimes \wedge (a,b), D) \oplus_{H(B) \otimes \wedge a}(\mathbf{Q} \oplus I_{\mathcal{L}},d).
\]

We attach the top cell inducing Poincaré duality by Whitehead products with respect to some basis of $\pi_{\ast}(\I{\mathcal{L}}{Th(E)}) \otimes \mathbf{Q}$ and denote the resulting space by $\DP{Th(E)}$, its cohomology algebra is then 
\[
H^{\ast}(\DP{Th(E)}) = \frac{\mathbf{Q}[e_{4},e_{6}] \otimes \wedge (e_{7},e_{9},e_{13})}{(e_{4}^{2},e_{6}^{2},e_{4}e_{6},e_{4}e_{7},e_{6}e_{9},e_{7}e_{9},e_{6}e_{7}=e_{4}e_{9}=e_{13})}, \, |e_{i}|=i.
\]

\printbibliography

\end{document}